\theoremstyle{plain}
\newtheorem{prop}{Proposition}[section]
\newtheorem{proposition}[prop]{Proposition}
\newtheorem{theorem}[prop]{Theorem}
\newtheorem{cor}[prop]{Corollary}
\newtheorem{lemma}[prop]{Lemma}
\theoremstyle{definition}
\newtheorem{definition}[prop]{Definition}
\newtheorem*{defi*}{Definition}
\newtheorem{example}[prop]{Example}
\newtheorem{remark}[prop]{Remark}
\newtheorem{assumption}[prop]{Assumption}
\theoremstyle{remark}
\newtheorem{claim}{Claim}
\newcommand{\Rd}{\mathbb{R}^{d}}
\newcommand{\hs}{[0,\infty) \times \mathbb{R}^{d}}
\newcommand{\BN}{\mathbb{N}}
\newcommand{\BR}{\mathbb{R}}
\newcommand{\CB}{\mathcal{B}}
\newcommand{\CF}{\mathcal{F}}
\newcommand{\CM}{\mathcal{M}}
\newcommand{\CK}{\mathcal{K}}
\newcounter{TempForOutput}
\begin{document}
\title{Brownian Motion with Singular Time-Dependent Drift}
\author{Peng Jin}
\address{Fakult\"at f\"ur Mathematik und Naturwissenschaften \\
                 Bergische Universit\"at Wuppertal \\ Gau\ss stra\ss e 20, 42119 Wuppertal, Germany}
\thanks{Research supported by the DFG through IRTG-1132}
\keywords{Stochastic differential equations, singular drift, Kato-class, weak solution, martingale problem, resolvent}
\subjclass[2000]{primary: 60H10; Secondary: 60J60}
\begin{abstract}In this paper we study weak solutions for the following type of stochastic differential equation
\[
\begin{cases}dX_{t}=dW_{t}+b(t, X_{t})dt, & \ t\ge s,\\
   \   X_{s}=x,
    \end{cases}
\]
where $b: [0,\infty) \times \mathbb{R}^{d} \to \mathbb{R}^{d}$ is a measurable drift, $W=(W_{t})_{t \ge 0}$ is a $d$-dimensional Brownian motion and $(s,x)\in [0,\infty) \times \mathbb{R}^{d}$ is the  starting point. A solution $X=(X_t)_{t \ge s}$ for the above SDE is called a Brownian motion with time-dependent drift $b$ starting from $(s,x)$. Under the assumption that $|b|$ belongs to the forward-Kato class $\mathcal{F} \mathcal{K}_{d-1}^{\alpha}$ for some $\alpha \in (0,1/2)$, we prove that the above SDE has a unique weak solution for every starting point $(s,x)\in [0,\infty) \times \mathbb{R}^{d}$.
\end{abstract}

\maketitle

\section{Introduction}
In this paper we consider weak solutions to the following stochastic differential equation
\begin{equation}\label{SDE1}
\begin{cases}dX_{t}=dW_{t}+b(t, X_{t})dt, & \ t\ge s,\\
   \   X_{s}=x,
    \end{cases}   
\end{equation}
where $b: \hs \to \Rd$ is measurable, $W=(W_{t})_{t \ge 0}$ is a $d$-dimensional Brownian motion and $(s,x)\in \hs$ is the  starting point. Throughout this paper we assume that $d \ge 3$. A solution $X=(X_t)_{t \ge s}$ for the SDE (\ref{SDE1}) is called a Brownian motion with time-dependent drift $b$ starting from $(s,x)$. Since the drift $b$ is not necessarily locally bounded, we emphasize that solutions of (\ref{SDE1}) are supposed to fulfill the integrability condition
\[
\int_s^t|b(u,X_u)|du<\infty \quad \mbox{a.s.}, \quad \forall t \ge s.
\]

In order to get weak solutions to (\ref{SDE1}), the most straightforward approach is to use the Girsanov transformation. This approach has been investigated in many papers (see for example, \cite{MR774179,MR1104660,MR1246978}). In \cite{MR1246978}, Stummer gave several examples of singular drift $b$ such that the Girsanov transformation is applicable and thus weak solutions to (\ref{SDE1}) exist. It should be noted that if a weak solution to (\ref{SDE1}) is obtained through Girsanov transformation, then its law on the path space would be absolutely continuous with respect to the law of the Brownian motion.

Besides using Girsanov transformation, there are some analytical approaches to solve (\ref{SDE1}). In the case when the drift $b(t,x)=b(x)$ is time-independent, Bass and Chen \cite{MR1964949} considered the following SDE
\begin{equation}\label{eqsde2}
\begin{cases}dX_{t}=dW_{t}+b( X_{t})dt, \\
   \   X_{0}=x,
    \end{cases}
\end{equation}
and proved that if $|b|$ belongs to the Kato class $\CK_{d-1}$, namely
\begin{equation}\label{katodm1}
    \lim_{r \to 0} \sup_{x \in \Rd}\int_{B(x;r)}\frac{|b(y)|}{|x-y|^{d-1}}dy=0,
\end{equation}
then (\ref{eqsde2}) has a unique weak solution. Their method is based on the construction of  the resolvent $G^{\lambda}$ ($\lambda>0$) of the desired process $(X_{t})_{t \ge 0}$. Let $R^{\lambda}$ denote the resolvent operator of Brownian motion. When $|b| \in \CK_{d-1}$, the generator $L=\triangle/2 + b \cdot \nabla$ of $(X_{t})_{t \ge 0}$ can be considered as a small perturbation of $\triangle/2$ and intuitively
\begin{equation}\label{idforres}
 G^{\lambda}=\Big (\lambda- \frac{1}{2}\triangle - b\cdot \nabla \Big )^{-1}=R^{\lambda}\Big ( \sum^{\infty}_{j=0}(BR^{\lambda})^{j}\Big),
 \end{equation}
where $B$ denotes the operator $b \cdot \nabla$. With the help of some gradient estimates on $R^{\lambda}$, the identity (\ref{idforres}) was rigorously established in \cite{MR1964949}.  We should point out that the above mentioned result of \cite{MR1964949} hold in a more general setting and is actually valid for the case when the drift $b$ is a Radon measure that satisfies (\ref{katodm1}), although in this case the notion of a solution to (\ref{eqsde2}) has to be defined in a more general sense. Later, analytical and probabilistic properties of the solution $(X_{t})_{t \ge 0}$ to (\ref{eqsde2}) with a drift $|b| \in \CK_{d-1}$ were investigated by Kim and Song, see \cite{MR2247841,MR2317765,MR2319644,MR2411521}; among many other things, they obtained two-sided Gaussian estimates for the heat kernel of $(X_{t})_{t \ge 0}$.

In this paper we shall apply the method of \cite{MR1964949} to treat the SDE (\ref{SDE1}), where the drift is singular and time-dependent. From Bass and Chen's work, we know that a possible way to get a weak solution to (\ref{SDE1}) is to construct the resolvent operator of the solution. Since the drift $b(t,\cdot)$ in (\ref{SDE1}) is time-dependent, we have to consider the space-time resolvent $S^{\lambda}$, namely
\begin{equation}
 S^{\lambda}=\Big (\lambda-\big( \frac{\partial }{\partial t}+\frac{1}{2}\triangle +b(t,\cdot) \cdot \nabla \big)\Big )^{-1}.
 \end{equation}
The generator  of the space-time process $(X_{t},t)_{t \ge s}$ is given by $\partial/\partial t+\triangle/2 +b(t,\cdot) \cdot \nabla$ and could be considered as a perturbation of $\partial/\partial t+\triangle/2 $, which is the generator of the process $(W_{t},t)_{t \ge s}$. Therefore, we expect to obtain a similar expression of $S^{\lambda}$ like the identity (\ref{idforres}). However, we first have to identify the class of drifts for which the term $b(t,\cdot) \cdot \nabla$ is ``small enough" compared to $\partial/\partial t+\triangle/2$ and the perturbation argument works.

We now state our assumption on the drift $b$ in (\ref{SDE1}). We assume $|b|$ to be in the forward-Kato class $\CF \CK_{d-1}^{\alpha}$ for some $\alpha \in (0,1/2)$, that is
\begin{equation}\label{assump}
    \lim_{h \to 0}N^{\alpha,+}_{h}(|b|)=0,
\end{equation}
where
\[
    N^{\alpha,+}_{h}(|b|):= \sup_{(s,x)\in \hs}\int_{s}^{s+h}\int_{\mathbb{R}^{d}}\frac{1}{(t-s)^{\frac{d+1}{2}}}\exp(-\alpha \frac{|y-x|^{2}}{t-s})|b(t,y)|dydt.
\]
It's worth noting that the forward-Kato class $\CF \CK_{d-1}^{\alpha}$ includes the (time-independent) Kato class $\CK_{d-1}$ and the parabolic Kato class defined in \cite[Definition 1.1]{MR1457736} (see also \cite[Definition 3.1]{MR2853532}). Under the above assumption on the drift $b$, we will prove in this paper that the SDE (\ref{SDE1}) has a unique weak solution for every starting point $(s,x)$. The point is that if the condition (\ref{assump}) is satisfied, then the perturbation method mentioned above to construct the resolvent $S^{\lambda}$ of the process $(t, X_{t})_{t \ge s}$  applies. We should remark that our assumption (\ref{assump}) allows us to do a perturbation on the space-time resolvent of Brownian motion, but is generally not strong enough to enable us to carry out a perturbation on the Brownian heat kernel. To do a drift perturbation on the Brownian heat kernel $p(s,x;t,y)$, one has to deal with two singularities of $p(s,x;t,y)$ both at $s$ and $t$, and thus needs a stronger assumption like the one used in \cite{MR2853532,MR1457736}.

In this paper we only consider weak solutions to (\ref{SDE1}). However, the existence of strong solutions to SDEs with a singular drift term is also an interesting problem and has been well-studied.  Concerning the SDE (\ref{SDE1}), Krylov and R\"ockner \cite{MR2117951} proved that if $b$ is locally in $L^{p,q}$ (see Example \ref{examplelpq} below for a definition) with $p \ge 2$ and $d/2p+1/q<1/2$, then (\ref{SDE1}) has a unique strong solution up to an explosion time. For the case of non-constant Sobolev diffusion coefficients, see \cite{MR2820071}.

A similar problem related to this paper is to consider an $\alpha$-stable process with singular drift. Recently, Chen and Wang \cite{ChenZ.Q.arXiv1309.6414} studied rotationally symmetric $\alpha$-stable process with a drift belonging to the Kato class $\CK_{d,\alpha-1}$ (see \cite[Definition 1.1]{ChenZ.Q.arXiv1309.6414}); they proved the existence and uniqueness, in the weak sense, of such a process and established sharp two-sided estimates for its heat kernel. Sharp two-sided Dirichlet heat kernel estimates for such a drifted $\alpha$-stable process were derived in \cite{MR3050510}. As shown in \cite{MR3192504}, similar results hold when the drift is not a vector-valued function but a signed measure belonging to $\CK_{d,\alpha-1}$. We refer also to \cite{MR2945756,MR3127913} where the case of a more general $\alpha$-stable process was discussed.

The rest of the paper is organized as follows. In Section 2 we recall the definitions of the martingale and local martingale problems and their connections to weak solutions of SDEs. In Section 3 we collect some properties of the forward-Kato class $\CF \CK_{d-1}^{c}$. In Section 4 we prove some gradient estimates for the space-time resolvent $R^{\lambda}$ of Brownian motion. In Section 5 we prove the local existence and uniqueness of weak solutions to (\ref{SDE1}). In Section 6 we obtain the global existence and uniqueness of weak solutions to (\ref{SDE1}). Finally, we fix some notation used below. For a bounded function $g$ on $\hs$ we write $\|g\|_{\infty}:=\sup_{(s,x) \in \hs} |g(s,x)|$.

\section{Preliminaries}
As well-known, weak solutions to SDEs are equivalent to solutions to the corresponding local martingale problem of Stroock and Varadhan. As compared with dealing with weak solutions to SDEs directly, using the local-martingale-problem approach has several advantages. One advantage is the availability of the localization technique, which is essential for this paper.

Let
\[
    L_{t}=\frac{1}{2}\triangle + b(t,\cdot) \cdot \nabla,
\]
where $\triangle$ and $\nabla$ are the Laplacian and the gradient operator, respectively, with respect to the spatial variable $x$. Let
 $\Omega=C([0,\infty); \mathbb{R}^{d})$ be the space of continuous trajectories from $[0,\infty)$ into $\Rd$. Given $t\ge 0$ and $\omega \in \Omega$, define $X_{t}(\omega):=\omega_t$. Let
\[
    \CM_{t}:=\sigma (X_{s}: 0 \le s \le t),\]
and
\[
    \CM:= \sigma \big( \cup_{t \ge 0} \CM_{t} \big ).\]

Given $(s,x)\in [0,\infty) \times \mathbb{R}^{d}$, a solution to the local martingale problem for $L_{t}$ starting from $(s,x)$ is a probability measure $\mathbf{P}$ on $(\Omega=C([0,\infty);\mathbb{R}^{d}),\mathcal{M})$ with the following properties:
\begin{equation}\label{wsintegrcondi}
    \mathbf{P}(X_{t}=x,\ \forall t \le s)=1, \quad \mathbf{P}\Big(\int_s^t|b(u,X_u)|du<\infty, \ \forall t \ge s\Big)=1 \end{equation}
 and
\begin{equation}\label{eq2defimp}
    f(X_{t})-\int^{t}_{s}L_{u}f(X_{u})du
\end{equation}
is a $(\mathbf{P}, \mathcal{M}_{t})$ local martingale after time $s$ for all $f \in C^{\infty}_{0}(\mathbb{R}^{d})$.

If a probability measure $\mathbf{P}$ on $(\Omega=C([0,\infty);\mathbb{R}^{d}),\mathcal{M})$ satisfies (\ref{wsintegrcondi}) and is such that the process defined by (\ref{eq2defimp}) is a $(\mathbf{P}, \mathcal{M}_{t})$ martingale after time $s$ for all $f \in C^{\infty}_{0}(\mathbb{R}^{d})$, then $\mathbf{P}$ is called a solution to the martingale problem for $L_{t}$ starting from $(s,x)$.

In our case, since the second order term in $L_{t}$ is $\triangle/2$, it follows from \cite[Proposition 4.11]{MR1121940} that the martingale problem and the local martingale problem for $L_{t}$ are equivalent.

We say that the martingale problem for $L_{t}$ is well-posed if, for each $(s,x) \in \hs$, there is exactly one solution to that martingale problem starting from $(s,x)$.

\section{Forward-Kato Class $\CF \CK_{d-1}^{c}$}
In this section we give some examples of functions which belong to the forward-Kato class $\CF \CK_{d-1}^{c}$. Some properties of $\CF \CK_{d-1}^{c}$ are also discussed.
\begin{definition}\label{defifk}Let $c>0$ be a constant. A measurable function $f$ on $\hs$ is said to be in  the forward-Kato class $\CF \CK_{d-1}^{c}$ if
\[
    \lim_{h \to 0}N^{c,+}_{h}(f)=0,\]
    where
\begin{equation}\label{normnch}
    N^{c,+}_{h}(f):= \sup_{(s,x)\in \hs}\int_{s}^{s+h}\int_{\mathbb{R}^{d}}\frac{1}{(t-s)^{\frac{d+1}{2}}}\exp(-c \frac{|y-x|^{2}}{t-s})|f(t,y)|dydt.
\end{equation}
\end{definition}

The forward-Kato class $\CF \CK_{d-1}^{c}$ includes several important classes of functions. If a measurable function $f$ on $\hs$ is bounded, then it is obvious that $f \in \CF \CK^{c}_{d-1}$ for any $c>0$. The following are some other examples of functions which are in $\CF \CK_{d-1}^{c}$.

\begin{example}\label{exam1.1.1}

Let a measurable function $f:\hs \to \BR$ be time-independent, namely $f(t,x)=f(x)$, and $|f(x)| \in \CK_{d-1}$ (see the condition (\ref{katodm1}) in the introduction). Then $f \in  \CF \CK^{c}_{d-1} $ for any $c >0$. The reader is referred to \cite[Proposition 2.3]{MR2247841} for a proof of this fact.
\end{example}

\begin{example}\label{examplelpq}
For $p, q \in [1, \infty]$ we set $L^{p}=L^{p}(\Rd)$, $L^{p,q}=L^{q}(\mathbb{R}, L^{p})$. If a measurable function $f$ on $\hs$ has compact support and $f \in L^{p,q}$ (here $f$ is set to be 0 on $\mathbb{R}^{d+1} \setminus \hs$) with $d/2p+1/q<1/2$, then $f \in \CF \CK^{c}_{d-1}$ for any $c>0$, see \cite[Proposition 2.1]{MR1457736} for a proof.
\end{example}

\begin{example}\label{exam1.1.3}
Let $c>0$ be a constant. For $(t,y) \in \hs$ define
\begin{displaymath}
f(t,y):= \left\{ \begin{array}{ll}
-\frac{1}{(1-t)^{1/2}\ln(1-t)}, \ & \textrm{if $\ \frac{1}{2}\le t<1$ and $|y|\le 3d(1-t)^{\frac{1}{2}}$}, \\
0, & \textrm{otherwise}.
\end{array} \right .
\end{displaymath}
Consider $\tilde{f}$, the ``time-reversal''  of $f$, namely
\begin{displaymath}
\tilde{f}(t,y):= \left\{ \begin{array}{ll}
f(1-t,y), \ & \textrm{if $\ 0\le t\le1$ and $y\in \Rd$}, \\
0, & \textrm{otherwise}.
\end{array} \right .
\end{displaymath}
Then
\begin{equation}\label{appendixclaim1}
f \in \CF \CK^{c}_{d-1} \quad \mbox{but} \quad \tilde{f} \notin \CF \CK^{c}_{d-1}.
\end{equation}
The proof of (\ref{appendixclaim1}) is technical and is put in the appendix. This example shows that our forward-Kato class $\CF \CK_{d-1}^{c}$ is strictly larger than the parabolic Kato classes defined in \cite[Definition 1.1]{MR1457736} and \cite[Definition 3.1]{MR2853532}.
\end{example}

We next give some properties of the class $\CF \CK_{d-1}^{c}$. These properties will be used very often in subsequent sections.

\begin{proposition}\label{prop1.1.1}Suppose that $f \in \CF \CK^{c}_{d-1}$. Then for any $h>0$ we have $N_{h}^{c,+}(f)< \infty$, where $N_{h}^{c,+}(f)$ is defined in (\ref{normnch}) .
\end{proposition}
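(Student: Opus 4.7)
The plan is to bootstrap from the limiting statement $\lim_{h \to 0} N_h^{c,+}(f) = 0$: this limit implies the existence of some small $h_0>0$ with $N_{h_0}^{c,+}(f) \le 1$ (say), and in particular $N_{h_0}^{c,+}(f)$ is finite. I then want to show that for arbitrary $h>0$, $N_h^{c,+}(f)$ is at most a multiple of $N_{h_0}^{c,+}(f)$. Choosing an integer $N$ with $Nh_0 \ge h$, I would split the time integral in the definition of $N_h^{c,+}(f)$ into $N$ consecutive sub-intervals of length $h_0$. The goal is to show that each sub-integral is bounded by $N_{h_0}^{c,+}(f)$, which gives $N_h^{c,+}(f) \le N \cdot N_{h_0}^{c,+}(f) < \infty$.

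The heart of the argument is the comparison of the kernel $K(s,x;t,y) := (t-s)^{-(d+1)/2}\exp(-c|y-x|^2/(t-s))$ at two different base times. Write $K(s,x;t,y) = (t-s)^{-1/2} G_{t-s}(x-y)$ where $G_{a}(u) = a^{-d/2}\exp(-c|u|^2/a)$. A direct Gaussian convolution computation shows that $G_{a+b} = (\pi/c)^{-d/2}\, G_a * G_b$, i.e. $\{G_a\}_{a>0}$ is a semigroup up to a normalising constant, and that $(\pi/c)^{-d/2} G_a$ is a probability density in $u$. Applied with $a = s_j - s$ and $b = t - s_j$ for $s_j := s + (j-1)h_0$, this yields
\[
K(s,x;t,y) = \frac{(t-s_j)^{1/2}}{(t-s)^{1/2}}\, (\pi/c)^{-d/2}\int_{\Rd} G_{s_j-s}(x-z)\, K(s_j,z;t,y)\, dz,
\]
and since $s_j \ge s$ the leading ratio is $\le 1$. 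Thus $K(s,x;t,y)$ is dominated by a $z$-average of kernels based at $(s_j, z)$ with a probability density in $z$.

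Now for $t$ in the $j$-th sub-interval $[s_j, s_j + h_0]$, applying Fubini and this pointwise bound gives
\[
\int_{s_j}^{s_j+h_0}\!\!\int_{\Rd} K(s,x;t,y)\,|f(t,y)|\,dy\,dt \le (\pi/c)^{-d/2}\!\int_{\Rd} G_{s_j-s}(x-z)\!\left[\int_{s_j}^{s_j+h_0}\!\!\int_{\Rd} K(s_j,z;t,y)\,|f(t,y)|\,dy\,dt\right]\!dz.
\]
The inner bracket is bounded by $N_{h_0}^{c,+}(f)$ (this is exactly the definition applied at the base point $(s_j, z)$), so after pulling this constant out and using that $(\pi/c)^{-d/2}\int G_{s_j-s}(x-z)\,dz = 1$, the $j$-th sub-integral is bounded by $N_{h_0}^{c,+}(f)$. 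The case $j=1$ is handled directly from the definition (with base point $(s,x)$).

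Summing over $j=1,\dots,N$ and taking the supremum over $(s,x)\in\hs$ gives $N_h^{c,+}(f) \le N\cdot N_{h_0}^{c,+}(f) < \infty$. The only mild obstacle is the convolution identity for $\{G_a\}$, which is a short but slightly tedious Gaussian square-completion; once that is in hand, the proof is a one-line semigroup-splitting argument.
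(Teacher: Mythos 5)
Your proposal is correct and follows essentially the same route as the paper: the paper's key inequality (\ref{neweq1.1.1}) is exactly your Chapman--Kolmogorov splitting of the kernel at an intermediate time $s_j$, with the factor $(t-s_j)^{1/2}/(t-s)^{1/2}\le 1$ absorbed, and the paper likewise iterates the bound $N_{nl}^{c,+}(f)\le n\,N_l^{c,+}(f)$ over consecutive sub-intervals of a small length $l$ on which the norm is finite. The only cosmetic difference is that the paper does the two-interval case explicitly and inducts, whereas you sum over all $N$ sub-intervals at once.
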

\begin{proof}Let $x,y \in \Rd$ and $s<s_{1} <t$. Then we have the following inequality
\begin{align}
&\int_{\mathbb{R}^{d}}\frac{(2c)^{\frac{d}{2}}}{(2\pi)^{\frac{d}{2}}(s_{1}-s)^{\frac{d}{2}}}\exp(-c  \frac{|z-x|^{2}}{s_{1}-s})\frac{(2c)^{\frac{d}{2}}}{(2\pi)^{\frac{d}{2}}(t-s_{1})^{\frac{d+1}{2}}}\exp(-c \frac{|y-z|^{2}}{t-s_{1}})dz \notag\\
\ge & \frac{1}{(t-s)^{\frac{1}{2}}}\int_{\mathbb{R}^{d}}\frac{(2c)^{\frac{d}{2}}}{(2\pi)^{\frac{d}{2}}(s_{1}-s)^{\frac{d}{2}}}\exp(-c  \frac{|z-x|^{2}}{s_{1}-s})\frac{(2c)^{\frac{d}{2}}}{(2\pi)^{\frac{d}{2}}(t-s_{1})^{\frac{d}{2}}}\exp(-c \frac{|y-z|^{2}}{t-s_{1}})dz \notag \\
\ge & \frac{1}{(t-s)^{\frac{1}{2}}}\frac{(2c)^{\frac{d}{2}}}{(2\pi)^{\frac{d}{2}}(t-s)^{\frac{d}{2}}}\exp(-c \frac{|y-x|^{2}}{t-s}) \notag \\
\label{neweq1.1.1}\ge &  \frac{(2c)^{\frac{d}{2}}}{(2\pi)^{\frac{d}{2}}(t-s)^{\frac{d+1}{2}}}\exp(-c \frac{|y-x|^{2}}{t-s}).
\end{align}
Suppose that $l>0$ is such that $N_{l}^{c,+}(f) <\infty$. Set $s_{1}:=s+l$. Then
\begin{align*}&\int_{s}^{s+2l}\int_{\mathbb{R}^{d}}\frac{1}{(t-s)^{\frac{d+1}{2}}}\exp(-c  \frac{|y-x|^{2}}{t-s})|f(t,y)|dydt \\
\le & N^{c,+}_{l}(f)+\int_{s+l}^{s+2l}\int_{\mathbb{R}^{d}}\frac{1}{(t-s)^{\frac{d+1}{2}}}\exp(-c  \frac{|y-x|^{2}}{t-s})|f(t,y)|dydt,
\end{align*}
and by (\ref{neweq1.1.1})
 \begin{align*}&\int_{s+l}^{s+2l}\int_{\mathbb{R}^{d}}\frac{1}{(t-s)^{\frac{d+1}{2}}}\exp(-c \frac{|y-x|^{2}}{t-s})|f(t,y)|dydt  \\
\le &\int_{\mathbb{R}^{d}}\frac{(2c)^{\frac{d}{2}}}{(2\pi)^{\frac{d}{2}}l^{\frac{d}{2}}}\exp(-c  \frac{|z-x|^{2}}{l})dz\int_{s+l}^{s+2l}\int_{\mathbb{R}^{d}}\frac{|f(t,y)|}{(t-s_{1})^{\frac{d+1}{2}}}\exp(-c  \frac{|y-z|^{2}}{t-s_{1}})dydt \\
\le & N^{c,+}_{l}(f)\int_{\mathbb{R}^{d}}\frac{(2c)^{\frac{d}{2}}}{(2\pi)^{\frac{d}{2}}l^{\frac{d}{2}}}\exp(-c \frac{|z-x|^{2}}{l})dz \\
\le & N^{c,+}_{l}(f).
\end{align*}
Therefore, we get \[
\int_{s}^{s+2l}\int_{\mathbb{R}^{d}}\frac{1}{(t-s)^{\frac{d+1}{2}}}\exp(-c \frac{|y-x|^{2}}{t-s})|f(t,y)|dydt\le 2N^{c,+}_{l}(f).\]
Similarly, we can prove for all $n \in \mathbb{N}$ \[
\int_{s}^{s+nl}\int_{\mathbb{R}^{d}}\frac{1}{(t-s)^{\frac{d+1}{2}}}\exp(-c  \frac{|y-x|^{2}}{t-s})|f(t,y)|dydt\le nN^{c,+}_{l}(f).\]
Hence the assertion follows. 
\end{proof}

The following result is \cite[Proposition 2.4]{MR1783642}. For the reader's convenience, we give a proof here.
\begin{lemma}\label{nlemma2.1.2}Suppose $f \in \CF \CK^{c}_{d-1}$ and set $f$ to be $0$ on $\mathbb{R}^{d+1} \setminus \hs$. Then for any nonnegative $\phi \in C_{0}^{\infty}(\mathbb{R}^{d+1})$ with $\int_{\mathbb{R}^{d+1}} \phi(\xi)d\xi=1$, we have
\[
  N^{c,+}_{h}(f \ast \phi)\le N^{c,+}_{h}(f).\]
Here, $f \ast \phi$ denotes the convolution of $\phi$ and $f$, that is, $f \ast \phi(\xi)=\int f(\xi-\eta)\phi(\eta)d\eta$.
\end{lemma}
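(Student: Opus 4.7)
The plan is to exploit the nonnegativity of $\phi$ together with a Chapman--Kolmogorov shift of the base point. Write $K(\tau,w):= \tau^{-(d+1)/2}\exp(-c|w|^{2}/\tau)$ for $\tau>0$ and $G_\alpha(w):=(c/(\pi\alpha))^{d/2}\exp(-c|w|^{2}/\alpha)$ for the Gaussian probability density of variance $\alpha/(2c)$. Since $\phi\ge 0$ and $\int\phi=1$, Jensen's inequality gives $|(f*\phi)(t,y)|\le\int\phi(u,z)|f(t-u,y-z)|\,du\,dz$. Inserting this into the definition of $N^{c,+}_{h}(f*\phi)$, applying Fubini, and changing variables $t'=t-u,\ y'=y-z$, I would reduce the claim to the inequality
\[
J(\sigma,\xi):= \int_{\sigma}^{\sigma+h}\!\!\int_{\Rd}K(t'-\sigma,\,y'-\xi)|f(t',y')|\,dy'dt'\ \le\ N^{c,+}_{h}(f)\qquad \forall\,(\sigma,\xi)\in\BR\times\Rd.
\]
Integrating this bound against $\phi$ and taking the supremum over $(s,x)\in\hs$ then yields the lemma.

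For $(\sigma,\xi)\in\hs$ the bound is immediate from the very definition of $N^{c,+}_{h}(f)$. The only non-trivial regime is $\sigma=s-u<0$, where $(\sigma,\xi)$ is not an admissible base point. Here I would use the convention $f\equiv 0$ on $\BR^{d+1}\setminus\hs$ to truncate the $t'$-integration to $[0,\sigma+h]$ (the integral being zero if $\sigma+h\le 0$), and then dominate the kernel via Chapman--Kolmogorov $G_{\tau_{1}+\tau_{2}}=G_{\tau_{1}}*G_{\tau_{2}}$: for $t'>0$ and $\sigma<0$ a direct computation gives
\[
K(t'-\sigma,\,y'-\xi)=\frac{t'^{\,1/2}}{(t'-\sigma)^{1/2}}\int_{\Rd}K(t',\,y'-z)\,G_{-\sigma}(z-\xi)\,dz\ \le\ \int_{\Rd}K(t',\,y'-z)\,G_{-\sigma}(z-\xi)\,dz,
\]
the last step because $t'\le t'-\sigma$.

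Substituting this upper bound into $J(\sigma,\xi)$ and swapping the order of integration gives
\[
J(\sigma,\xi)\le \int_{\Rd}G_{-\sigma}(z-\xi)\,\Big[\int_{0}^{\sigma+h}\!\!\int_{\Rd}K(t',\,y'-z)|f(t',y')|\,dy'dt'\Big]dz.
\]
Because the integrand is nonnegative and $\sigma+h\le h$, the bracketed expression is bounded by the corresponding integral over $[0,h]$; this one has admissible base point $(0,z)\in\hs$ and so does not exceed $N^{c,+}_{h}(f)$. Using $\int G_{-\sigma}=1$ yields $J(\sigma,\xi)\le N^{c,+}_{h}(f)$ and the lemma follows. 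The main technical point is precisely this Chapman--Kolmogorov estimate in the regime $\sigma<0$; it is essentially a variant of the semigroup inequality (\ref{neweq1.1.1}) used in Proposition~\ref{prop1.1.1}, but it must be applied in the reverse direction, dominating the kernel by a convolution rather than the other way around.
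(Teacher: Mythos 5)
Your proof is correct and follows essentially the same route as the paper: Fubini plus the change of variables $t'=t-u$, $y'=y-z$, reduction to the shifted base point $(\sigma,\xi)$, the trivial cases $\sigma\ge 0$ and $\sigma+h\le 0$, and, for $\sigma<0<\sigma+h$, domination of the kernel by its Gaussian convolution through the intermediate time $0$ — which is precisely inequality (\ref{neweq1.1.1}) as used in the paper's display (\ref{BMSnewnewnew3}), not a ``reverse'' variant of it. The only (cosmetic) difference is that you make the Chapman--Kolmogorov step an exact identity with the explicit prefactor $t'^{1/2}/(t'-\sigma)^{1/2}\le 1$, where the paper absorbs this factor inside the inequality chain of (\ref{neweq1.1.1}).
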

\begin{proof}
For each fixed $(s,x) \in \hs$, let
\[
 g_{s,x}(t,y)=\frac{1}{(t-s)^{\frac{d+1}{2}}}\exp(-c \frac{|y-x|^{2}}{t-s}).\]

By Fubini's theorem, we get
\begin{align}
    &\int_{s}^{s+h}\int_{\mathbb{R}^{d}}\frac{1}{(t-s)^{\frac{d+1}{2}}}\exp(-c \frac{|y-x|^{2}}{t-s})|f \ast \phi|(t,y)dydt \notag \\
=&\int_{s}^{s+h}\int_{\mathbb{R}^{d}}g_{s,x}(t,y)\Big|\int_{\mathbb{R}^{d+1}}f (t-\tau,y-z)\phi(\tau,z)d\tau dz\Big|dydt \notag \\
\le & \int_{s}^{s+h}\int_{\mathbb{R}^{d}}g_{s,x}(t,y)\int_{\mathbb{R}^{d+1}}|f (t-\tau,y-z)|\phi(\tau,z)d\tau dzdydt \notag \\
\label{BMSnewnewnew2}=  & \int_{\mathbb{R}^{d+1}}\phi(\tau,z) \Big(\int_{s}^{s+h}\int_{\mathbb{R}^{d}}g_{s,x}(t,y)|f (t-\tau,y-z)|dydt\Big)d\tau dz.
\end{align}
Let $y-z=y^{\prime}, t-\tau=t^{\prime}$. Then
\begin{align}& \int_{s}^{s+h}\int_{\mathbb{R}^{d}}g_{s,x}(t,y)|f (t-\tau,y-z)|dydt \notag \\
=& \int_{s}^{s+h}\int_{\mathbb{R}^{d}}\frac{1}{(t-s)^{\frac{d+1}{2}}}\exp(-c  \frac{|y-x|^{2}}{t-s})|f(t-\tau,y-z)|dydt \notag \\
\label{BMSnewnewnew}=& \int_{s-\tau}^{s-\tau+h}\int_{\mathbb{R}^{d}}\frac{1}{(t^{\prime}-(s-\tau))^{\frac{d+1}{2}}}\exp(-c  \frac{|y^{\prime}-(x-z)|^{2}}{(t^{\prime}-(s-\tau))})|f(t^{\prime},y')|dy^{\prime}dt^{\prime} .
\end{align}

Note that $f$ is equal to $0$ on $\mathbb{R}^{d+1} \setminus \hs$. If $s-\tau\ge 0$ or $s-\tau+h\le 0$, then it follows from (\ref{BMSnewnewnew}) that
\begin{equation}\label{BMSnewnewnew1}
\int_{s}^{s+h}\int_{\mathbb{R}^{d}}g_{s,x}(t,y)|f (t-\tau,y-z)|dydt \le N^{c,+}_{h}(f).
\end{equation}
If $s-\tau<0<s-\tau+h$, then we can write $l_1:=\tau-s$, $l_2:=s-\tau+h$, and conclude from (\ref{neweq1.1.1}) and  (\ref{BMSnewnewnew}) that
\begin{align}
& \int_{s}^{s+h}\int_{\mathbb{R}^{d}}g_{s,x}(t,y)|f (t-\tau,y-z)|dydt \notag \\
= & \int_{0}^{l_2}\int_{\mathbb{R}^{d}}\frac{1}{(t^{\prime}+l_1)^{\frac{d+1}{2}}}\exp(-c  \frac{|y^{\prime}-(x-z)|^{2}}{t^{\prime}+l_1})|f(t^{\prime},y')|dy^{\prime}dt^{\prime} \notag \\
\le &\int_{\mathbb{R}^{d}}\frac{(2c)^{\frac{d}{2}}}{(2\pi)^{\frac{d}{2}}(l_1)^{\frac{d}{2}}}\exp(-c  \frac{|z'-(x-z)|^{2}}{l_1})\int_{0}^{l_2}\int_{\mathbb{R}^{d}}g_{0,z'}(t',y')|f(t',y')|dy'dt'dz' \notag\\
\le &\int_{\mathbb{R}^{d}}\frac{(2c)^{\frac{d}{2}}}{(2\pi)^{\frac{d}{2}}(l_1)^{\frac{d}{2}}}\exp(-c  \frac{|z'-(x-z)|^{2}}{l_1})\int_{0}^{h}\int_{\mathbb{R}^{d}}g_{0,z'}(t',y')|f(t',y')|  dy'dt'dz' \notag \\
\le &\int_{\mathbb{R}^{d}}\frac{(2c)^{\frac{d}{2}}}{(2\pi)^{\frac{d}{2}}(l_1)^{\frac{d}{2}}}\exp(-c  \frac{|z'-(x-z)|^{2}}{l_1})N^{c,+}_{h}(f)dz' \notag \\
\label{BMSnewnewnew3}=& N^{c,+}_{h}(f).
\end{align}

Since $\int_{\mathbb{R}^{d+1}} \phi(\tau,z)d\tau dz=1$, it follows from the inequalities (\ref{BMSnewnewnew2}), (\ref{BMSnewnewnew1}) and (\ref{BMSnewnewnew3}) that
$N^{c,+}_{h}(f \ast \phi)\le N^{c,+}_{h}(f)$.
\end{proof}

We now fix $c>0$ and suppose that $f \in \CF \CK^{c}_{d-1}$. By Definition \ref{defifk}, it is easy to see that $f$ is locally integrable. For any compact set $K \subset \hs$, we can define a finite measure
\begin{equation}\label{defiofmu}
\mu(d\xi):=\mathbf{1}_{K}(\xi) |f|(\xi)m(d\xi)
\end{equation}
on $(\hs, \mathcal{B})$, where $m$ is the Lebesgue measure on $\mathbb{R}^{d+1}$. The following lemma is straightforward.

\begin{lemma}\label{newlemma1.1.1}For each $(s,x)\in \hs$, define \[ g_{s,x}(t,y):=\frac{1}{(t-s)^{\frac{d+1}{2}}}\exp(-c \frac{|y-x|^{2}}{t-s}), \quad (t,y) \in (s,\infty)\times \Rd, \]
and $g_{s,x}(t,y):=0$ for $\ (t,y) \in [0,s] \times \Rd$. Then the family $\{g_{s,x}: (s,x)\in \hs \}$ of functions on $\hs$ is uniformly integrable with respect to the measure $\mu$, where $\mu$ is defined by (\ref{defiofmu}).
\end{lemma}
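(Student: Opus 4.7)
\medskip

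\noindent\textbf{Proof plan.} The strategy is to use the standard criterion for uniform integrability with respect to a finite measure: a family $\{g_\alpha\}$ is uniformly integrable with respect to a finite measure $\mu$ if and only if (a) $\sup_\alpha \int g_\alpha\, d\mu < \infty$, and (b) for every $\epsilon > 0$ there exists $\delta > 0$ such that $\mu(A) < \delta$ implies $\sup_\alpha \int_A g_\alpha\, d\mu < \epsilon$. Note first that $\mu$ is indeed finite: since $K$ is compact, $K \subset [0,T] \times \BR^d$ for some $T > 0$, and $f \in \CF\CK_{d-1}^c$ is locally integrable (this follows directly from Definition \ref{defifk} and Proposition \ref{prop1.1.1}), so $\mu(\hs) < \infty$.

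For the uniform $L^1$-bound (a), fix $T$ with $K \subset [0,T] \times \BR^d$. For any $(s,x) \in \hs$, since $g_{s,x}$ vanishes on $[0,s] \times \BR^d$,
\[
\int_{\hs} g_{s,x}\, d\mu \le \int_{s}^{s+T}\int_{\BR^d} \frac{1}{(t-s)^{\frac{d+1}{2}}}\exp\!\Bigl(-c\frac{|y-x|^2}{t-s}\Bigr)|f(t,y)|\,dy\,dt \le N^{c,+}_T(|f|),
\]
which is finite by Proposition \ref{prop1.1.1}, and independent of $(s,x)$.

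For the uniform absolute continuity (b), the key idea is to split the integral according to the size of $t-s$, isolating the singularity of $g_{s,x}$ at $t = s$. Given $\epsilon > 0$, use $f \in \CF\CK^c_{d-1}$ to choose $h > 0$ so small that $N^{c,+}_h(|f|) < \epsilon/2$. For any measurable $A \subset \hs$, write
\[
\int_A g_{s,x}\, d\mu = \int_{A \cap \{s < t \le s+h\}} g_{s,x}\, d\mu + \int_{A \cap \{t > s+h\}} g_{s,x}\, d\mu.
\]
The first term is bounded by $N^{c,+}_h(|f|) < \epsilon/2$ directly from the definition of $N_{h}^{c,+}$, uniformly in $(s,x)$ and $A$. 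On $\{t > s+h\}$ we have the trivial bound $g_{s,x}(t,y) \le h^{-(d+1)/2}$, so the second term is at most $h^{-(d+1)/2}\mu(A)$. Choosing $\delta := \tfrac{\epsilon}{2}h^{(d+1)/2}$ yields (b).

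The argument is essentially routine once one recognizes that the only potential obstruction is the blow-up of $g_{s,x}$ at $t=s$, and that the hypothesis $f \in \CF\CK^c_{d-1}$ is precisely tailored to neutralize this singularity in the short-time integral. The main (minor) point to be careful about is that the pointwise bound $g_{s,x} \le h^{-(d+1)/2}$ away from the diagonal gives absolute continuity with respect to $\mu$ with a constant depending only on $h$, and not on the starting point $(s,x)$.
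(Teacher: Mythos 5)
Your proof is correct. It takes a slightly different route from the paper's: the paper verifies the tail-integral definition of uniform integrability directly, observing that $g_{s,x}(t,y)>a$ forces $(t-s)^{-\frac{d+1}{2}}>a$, i.e.\ $t-s<h(a):=a^{-\frac{2}{d+1}}$, whence $\int_{\{g_{s,x}>a\}}g_{s,x}\,d\mu\le N^{c,+}_{h(a)}(f)\to 0$ as $a\to\infty$, uniformly in $(s,x)$ — a three-line argument. You instead verify the equivalent characterization ``uniformly bounded in $L^{1}(\mu)$ plus uniformly absolutely continuous,'' which is legitimate because $\mu$ is finite (and you correctly flag this). The underlying mechanism is identical in both versions: the Kato-class condition kills the contribution of the thin time-slab $t-s\le h$, and off that slab $g_{s,x}$ is bounded by $h^{-\frac{d+1}{2}}$. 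What your packaging buys is that it delivers the uniform absolute continuity statement explicitly, which is in fact the form of uniform integrability invoked later in the proof of Proposition \ref{nlemma2.1.3} (the estimate of the term $\mathrm{II}$ over the small set $C$); what the paper's packaging buys is brevity, since it needs neither the finiteness of $\mu$ nor Proposition \ref{prop1.1.1} for the $L^{1}$ bound. Both arguments are sound.
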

\begin{proof}Let $a>0$ and $h(a):=a^{-\frac{2}{d+1}}$. Then we have
\begin{align*}&\int_{\{g_{s,x}>a\}}g_{s,x}d\mu \\
=&\int_{\{(t,y):g_{s,x}(t,y)>a\}\cap K}\frac{1}{(t-s)^{\frac{d+1}{2}}}\exp(-c \frac{|y-x|^{2}}{t-s})|f(t,y)|dydt \\
\le & \int_{s}^{s+h(a)}\int_{\Rd}\frac{1}{(t-s)^{\frac{d+1}{2}}}\exp(-c \frac{|y-x|^{2}}{t-s})|f(t,y)|dydt\\
\le & N^{c,+}_{h(a)}(f).
\end{align*}
Since $f \in \CF \CK^{c}_{d-1}$ and $h(a)$ tends to $0$ as $a \to \infty$, it follows that
\[
\lim_{a \to \infty} \bigg ( \sup_{(s,x)\in \hs} \int_{\{g_{s,x}>a\}}g_{s,x}d\mu \bigg) =0.\]
Therefore, $\{g_{s,x}: (s,x)\in \hs \}$ is uniformly integrable with respect to the measure $\mu$. 
\end{proof}

The following proposition is an extension of \cite[Proposition 2.4(ii)]{MR1783642}.
\begin{proposition}\label{nlemma2.1.3} Given any nonnegative $\phi \in C_{0}^{\infty}(\mathbb{R}^{d+1})$ with $\int_{\mathbb{R}^{d+1}} \phi(\xi)d\xi=1$, define $\phi_{n}(\xi):=n^{d+1}\phi(n\xi)$, $\xi \in \mathbb{R}^{d+1}$. Suppose $f \in \CF \CK^{c}_{d-1}$. Then for any compact set $K \subset \hs$,
\[
   \lim_{n \to \infty}N_{h}^{c,+}(  |f\ast \phi_{n}-f| \mathbf{1}_{K} )=0.\]
\end{proposition}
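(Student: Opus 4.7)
The plan is to split the inner integral defining $N^{c,+}_h(|f\ast\phi_n - f|\mathbf{1}_K)$ according to whether the Gaussian kernel
\[
g_{s,x}(t,y) = \frac{1}{(t-s)^{(d+1)/2}}\exp\Bigl(-c\frac{|y-x|^2}{t-s}\Bigr)
\]
exceeds a large threshold $a$ or not. On $\{g_{s,x}>a\}$ I will use a uniform (in $n$, in $(s,x)$) estimate coming from Lemma \ref{newlemma1.1.1}, combined with Lemma \ref{nlemma2.1.2} to handle the mollifications; on $\{g_{s,x}\le a\}$ I will bound the kernel by $a$ and invoke the standard $L^1$-convergence of mollifications on compact sets.

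For the tail estimate, observe that $g_{s,x}(t,y)\le (t-s)^{-(d+1)/2}$, so $\{g_{s,x}>a\}\subset\{s<t<s+h(a)\}$ with $h(a):=a^{-2/(d+1)}$. After extending $|f|$ by $0$ outside $\hs$, one has $|f\ast\phi_n|\le |f|\ast\phi_n$ because $\phi_n\ge 0$, hence
\[
\int_{\{g_{s,x}>a\}}g_{s,x}\,|f\ast\phi_n - f|\,\mathbf{1}_K\,dy\,dt \le \int_s^{s+h(a)}\!\!\int_{\Rd} g_{s,x}\bigl(|f|\ast\phi_n + |f|\bigr)\,dy\,dt.
\]
By Lemma \ref{nlemma2.1.2} (applied to $|f|$) the first piece is bounded by $N^{c,+}_{h(a)}(|f|\ast\phi_n)\le N^{c,+}_{h(a)}(f)$, and the second is at most $N^{c,+}_{h(a)}(f)$ directly. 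Since $f\in\CF\CK^c_{d-1}$ and $h(a)\downarrow 0$ as $a\to\infty$, the total tail contribution is at most $2N^{c,+}_{h(a)}(f)$, which tends to $0$ uniformly in $n$ and $(s,x)$.

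For the truncated part, for any fixed $a$,
\[
\int_{\{g_{s,x}\le a\}} g_{s,x}\,|f\ast\phi_n - f|\,\mathbf{1}_K\,dy\,dt \le a\int_K |f\ast\phi_n - f|\,dy\,dt,
\]
and the right-hand side is independent of $(s,x)$. Since Proposition \ref{prop1.1.1} gives $f\in L^1_{\loc}(\hs)$, the standard $L^1$-convergence of mollifications (applied on a slightly larger compact neighborhood of $K$, which for $n$ large contains the relevant translates appearing in the convolution) yields $\int_K|f\ast\phi_n - f|\,dy\,dt\to 0$. Given $\varepsilon>0$, first choose $a$ so large that the tail contribution is at most $\varepsilon/2$ uniformly, then choose $n$ so large that the truncated contribution is at most $\varepsilon/2$, which finishes the argument.

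The only non-routine point is making the tail estimate uniform in $n$; the standard $L^1$-density of smooth functions is not enough since the Kato norm $N^{c,+}_h$ weighs points near the temporal boundary $t=s$ heavily. This is exactly where the combination of the level-set bound $\{g_{s,x}>a\}\subset\{t-s<h(a)\}$ with the convolution inequality of Lemma \ref{nlemma2.1.2} (so that $f\ast\phi_n$ inherits the Kato smallness of $f$) takes care of the difficulty.
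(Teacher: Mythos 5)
Your argument is correct, but it takes a genuinely different route from the paper. The paper fixes the mollification parameter first: it moves the convolution onto the difference $f(\xi-\eta)-f(\xi)$, applies Lusin's theorem to replace $f$ by a continuous compactly supported $f_{\delta}$ off a set of measure $<2\delta$, controls the exceptional set via the uniform integrability of the family $\{g_{s,x}\}$ with respect to the finite measure $\mathbf{1}_{K^{1}}|f|\,dm$ (Lemma \ref{newlemma1.1.1}), and uses uniform continuity of $f_{\delta}$ on the good set. You instead truncate the Gaussian weight at level $a$: on $\{g_{s,x}>a\}\subset\{t-s<h(a)\}$ you bound the contribution by $2N^{c,+}_{h(a)}(f)$ uniformly in $n$ and $(s,x)$, using $|f\ast\phi_n|\le|f|\ast\phi_n$ and Lemma \ref{nlemma2.1.2} (legitimate, since $N^{c,+}_{h}(|f|)=N^{c,+}_{h}(f)$, so $|f|\in\CF\CK^{c}_{d-1}$); on $\{g_{s,x}\le a\}$ the weight is bounded and the ordinary $L^{1}$ mollification convergence on a compact neighbourhood of $K$ finishes the job. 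Your level set $\{g_{s,x}>a\}$ is exactly the one in Lemma \ref{newlemma1.1.1}, so in effect you re-derive the uniform integrability directly against the measures $(|f|\ast\phi_n+|f|)\,dm$ rather than against $|f|\,dm$; this lets you avoid Lusin's theorem entirely at the cost of invoking Lemma \ref{nlemma2.1.2} to make the tail bound uniform in $n$ --- a trade-off you correctly identify as the crux. Two minor points you gloss over, both harmless: local integrability of $f$ (needed for the $L^{1}$ mollification step) follows from Proposition \ref{prop1.1.1} only after a short argument lower-bounding averages of $g_{s,x}$ over $x$, which the paper also takes for granted; and one should fix $a$ large enough that $h(a)\le h$ so the level set stays inside the time window $[s,s+h]$.
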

\begin{proof}
Without loss of generality, we assume that the support of $\phi$ is included in the unit ball of $\mathbb{R}^{d+1}$, namely $\phi(\eta)=0$ for $|\eta| > 1$. It follows that $\phi_n(\eta)=0$ for $|\eta| > \frac{1}{n}$.

 For fixed $(s,x) \in \hs$, let  $g_{s,x}$ be as in Lemma \ref{newlemma1.1.1} and set
\[
    A:=[s,s+h]\times \mathbb{R}^{d}.\]
Then
\begin{align}
&\int_{s}^{s+h}\int_{\mathbb{R}^{d}}\frac{1}{(t-s)^{\frac{d+1}{2}}}\exp(-c \frac{|y-x|^{2}}{t-s})(|f\ast \phi_{n}-f| \mathbf{1}_{K})(t,y)dydt \notag \\
&=\int_{A}(|f\ast \phi_{n}-f|~g_{s,x})(\xi) \mathbf{1}_{K}(\xi)d\xi \notag\\
&=\int_{A}\Bigl\lvert\int_{\mathbb{R}^{d+1}}(f(\xi-\eta)-f(\xi))\phi_{n}(\eta)d\eta\Bigr\rvert g_{s,x}(\xi) \mathbf{1}_{K}(\xi)d\xi\notag \\
\label{neweq2.1.3}& \le \int_{\mathbb{R}^{d+1}}\phi_{n}(\eta) \int_{A}|f(\xi-\eta)-f(\xi)|g_{s,x}(\xi) \mathbf{1}_{K}(\xi)d\xi d\eta.
\end{align}
Set $K^{1}:=\{\xi \in \BR^{d+1}:  d(\xi,K)\le 1\}$, where $d(\xi,K):=\inf \{ |\xi-\eta|: \eta \in K \}$. By Lusin's theorem, for a given $ \delta>0$, there exist a closed set $F^{\delta} \subset K^{1}$ and a continuous function $f_{\delta}$ on $\BR^{d+1}$ with compact support such that
\[
    m(K^{1} \setminus F^{\delta})<\delta \quad \text{and} \quad f_{\delta}=f \ \text{on}  \ F^{\delta}. \]
Let $|\eta| \le 1$ and $C:=\big (K^{1}\setminus F^{\delta}\big)\cup \big ((K^{1} \setminus F^{\delta})+\eta\big )$. It is easy to see that
\[
   m(C )< 2 \delta.\]
If $\xi \in K$ and $\xi \notin F^{\delta} \cap (F^{\delta}+\eta)$, then $\xi \notin F^{\delta}$ or $\xi-\eta \notin F^{\delta}$, and thus $\xi \in \big (K^{1}\setminus F^{\delta}\big)\cup \big ((K^{1} \setminus F^{\delta})+\eta\big )$. So we get
\[
  K \subset \big (F^{\delta} \cap (F^{\delta}+\eta)\big )  \cup C.\]
Therefore,
\begin{align}
&\int_{A}|f(\xi-\eta)-f(\xi)|g_{s,x}(\xi)  \mathbf{1}_{K}(\xi)d\xi \notag\\
= & \int_{A\cap K}|f(\xi-\eta)-f(\xi)|g_{s,x}(\xi) \mathbf{1}_{K}(\xi)d\xi \notag\\
\le &\int_{A \cap  F^{\delta} \cap (F^{\delta}+\eta)}|f(\xi-\eta)-f(\xi)|g_{s,x}(\xi)d\xi+\int_{C}g_{s,x}(\xi) \mathbf{1}_{K}(\xi)|f(\xi-\eta)-f(\xi)|d\xi \notag \\
\le &\int_{A \cap  F^{\delta} \cap (F^{\delta}+\eta)}|f_{\delta}(\xi-\eta)-f_{\delta}(\xi)|g_{s,x}(\xi)d\xi+\int_{C}g_{s,x}(\xi) \mathbf{1}_{K}(\xi)|f(\xi-\eta)-f(\xi)|d\xi \notag\\
\label{neweq2.1.4}=:&  \rm{I}+\rm{II}.
\end{align}

Suppose $\epsilon>0$ is arbitrary. By Lemma \ref{newlemma1.1.1}, the family $\{g_{s,x}: (s,x)\in \hs \}$ is uniformly integrable with respect to the finite measure $\mathbf{1}_{K^{1}}(\xi) |f|(\xi)m(d\xi)$. Noting that $m(C) < 2 \delta$, we can choose $\delta$ small enough such that
\begin{align*}
   \textrm{II} =&\int_{C}g_{s,x}(\xi) \mathbf{1}_{K}(\xi)|f(\xi-\eta)-f(\xi)|d\xi \\
   \le & \int_{C}g_{s,x}(\xi) \mathbf{1}_{K}(\xi)|f(\xi)|d\xi+\int_{C-\eta}g_{s',x'}(\xi ) \mathbf{1}_{K^{1}}(\xi)|f(\xi )|d\xi  < \epsilon,
\end{align*}where $(s',x'):=(s,x)-\eta$.

Since $f_{\delta}$ is continuous and of compact support,  we can choose $n_{0}=n_{0}(\delta)$ large enough such that $|f_{\delta}(\xi-\eta)-f_{\delta}(\xi)|< \epsilon$ whenever $|\eta|\le 1/n_{0}$. If $|\eta| \le 1/n_0$, then
\begin{align*}\textrm{I}=&\int_{A \cap  F^{\delta} \cap (F^{\delta}+\eta)}|f_{\delta}(\xi-\eta)-f_{\delta}(\xi)|g_{s,x}(\xi)d\xi \\
\le & \epsilon \int_{A \cap  F^{\delta} \cap (F^{\delta}+\eta)}g_{s,x}(\xi)d\xi \le \epsilon \int_{A}g_{s,x}(\xi)d\xi .\end{align*}
Set $M:=\int_{A}g_{s,x}(\xi)d\xi$. By (\ref{neweq2.1.4}) and noting that $\phi_n(\eta)=0$ for $|\eta| > 1/n$, we have
\[
 0 \le \int_{\mathbb{R}^{d+1}}\phi_{n}(\eta) \int_{A}|f(\xi-\eta)-f(\xi)|g_{s,x}(\xi)  \mathbf{1}_{K}(\xi)d\xi d\eta<(M+1)\epsilon \]
for $n \ge n_0$.

 Using (\ref{neweq2.1.3}) and noting that the constant $M$ and the choice of $\delta$ are independent of $(s,x)$, we get
\[
    \sup_{(s,x)\in \hs}\int_{s}^{s+h}\int_{\mathbb{R}^{d}}\frac{1}{(t-s)^{\frac{d+1}{2}}}\exp(-c  \frac{|y-x|^{2}}{t-s})(|f\ast \phi_{n}-f| \mathbf{1}_{K})dydt \to 0 \]
as $n \to \infty$. 
\end{proof}

\section{Some Gradient Estimates for the Resolvent of Brownian Motion}In this section we derive some gradient estimates for the space-time resolvent $R^{\lambda}$ ($\lambda>0$) of Brownian motion. Recall that the transition density function $p(s,x;t,y)$ of Brownian motion is given by
\[
     p(s,x;t,y)=\frac{1}{(2\pi)^{\frac{d}{2}}(t-s)^{\frac{d}{2}}}\exp\Big(-\frac{|x-y|^{2}}{2(t-s)}\Big).\]

Throughout this section we fix a positive constant $\alpha$ with $\alpha<1/2$. It is easy to verify that there exists a constant $C_{1}>1$, depending on $\alpha$, such that for all $0 \le s <t$ and $x,y \in \Rd$,
\begin{equation}\label{eq2.2.2}
|\nabla_{x}p(s,x;t,y)| \le \frac{C_{1}}{(t-s)^{\frac{d+1}{2}}}\exp\Big(-\alpha  \frac{|x-y|^{2}}{t-s}\Big).
\end{equation}

For any $\lambda>0$ let $R^{\lambda}$ be the space-time resolvent of Brownian motion, namely
\begin{equation}\label{resbm}
R^{\lambda}f(s,x):=\int_{s}^{\infty}e^{-\lambda(t-s)}\int_{\mathbb{R}^{d}}p(s,x;t,y)f(t,y)dydt, \quad (s,x) \in \hs, \end{equation}
where $f$ is a bounded and measurable function on $\hs$.

\begin{lemma}\label{lemma2.2.2.1}Let $\alpha<1/2$ be a positive constant. If $f \in  \CF \CK^{\alpha}_{d-1}$ and $\textup{supp}(f) \subset [s_{1},s_{1}+h]\times \mathbb{R}^{d}$ for some $s_{1}\ge 0$ and $h>0$, then
\[
    |\nabla R^{\lambda}f(s,x)|\le C_{1} N_{h}^{\alpha,+}(f) \]
for all $(s,x) \in \hs$, where $C_{1}$ is the constant appearing in (\ref{eq2.2.2}).
\end{lemma}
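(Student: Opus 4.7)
The plan is to differentiate $R^{\lambda}f$ under the integral sign, invoke the pointwise gradient estimate (\ref{eq2.2.2}) to reduce the problem to controlling an integral of the Kato-type kernel against $|f|$, and then use a Chapman--Kolmogorov-style trick (exactly the inequality (\ref{neweq1.1.1})) to align the integration window with $[s,s+h]$, which is the one appearing in $N_{h}^{\alpha,+}(f)$.

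First, I would justify differentiation under the integral. Since $\supp(f)\subset[s_{1},s_{1}+h]\times\Rd$ and the gradient bound (\ref{eq2.2.2}) gives
\[
 |\nabla_{x} p(s,x;t,y)|\le \frac{C_{1}}{(t-s)^{(d+1)/2}}\exp\!\Big(-\alpha\frac{|y-x|^{2}}{t-s}\Big),
\]
dominated convergence (applied together with Proposition \ref{prop1.1.1}, which guarantees $N_{h}^{\alpha,+}(f)<\infty$) yields
\[
 \nabla R^{\lambda}f(s,x)=\int_{s\vee s_{1}}^{s_{1}+h}e^{-\lambda(t-s)}\int_{\Rd}\nabla_{x}p(s,x;t,y)\,f(t,y)\,dy\,dt.
\]
Dropping the factor $e^{-\lambda(t-s)}\le 1$ and inserting (\ref{eq2.2.2}) gives
\[
 |\nabla R^{\lambda}f(s,x)|\le C_{1}\int_{s\vee s_{1}}^{s_{1}+h}\int_{\Rd}\frac{1}{(t-s)^{(d+1)/2}}\exp\!\Big(-\alpha\frac{|y-x|^{2}}{t-s}\Big)|f(t,y)|\,dy\,dt.
\]

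Next I split into cases according to the position of $s$ relative to $[s_{1},s_{1}+h]$. The case $s\ge s_{1}+h$ is trivial (empty integration). If $s_{1}\le s<s_{1}+h$, then the integration range $[s,s_{1}+h]$ is contained in $[s,s+h]$, so the right-hand side is directly majorised by $C_{1}N_{h}^{\alpha,+}(f)$ by definition of the forward-Kato norm.

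The main obstacle is the case $s<s_{1}$, where the time window $[s_{1},s_{1}+h]$ is \emph{disjoint} from the beginning of $[s,s+h]$, so the definition of $N_{h}^{\alpha,+}$ cannot be applied directly. Here I would use the inequality (\ref{neweq1.1.1}) (which is nothing but a Chapman--Kolmogorov lower bound for Gaussian kernels) with parameters $s<s_{1}<t$ to write, for each $(t,y)$ in the support of $f$,
\[
 \frac{1}{(t-s)^{(d+1)/2}}\exp\!\Big(-\alpha\frac{|y-x|^{2}}{t-s}\Big)\le \int_{\Rd}\frac{(2\alpha)^{d/2}}{(2\pi)^{d/2}(s_{1}-s)^{d/2}}\exp\!\Big(-\alpha\frac{|z-x|^{2}}{s_{1}-s}\Big)\,\frac{\exp\!\big(-\alpha\tfrac{|y-z|^{2}}{t-s_{1}}\big)}{(t-s_{1})^{(d+1)/2}}\,dz.
\]
Plugging this in, applying Fubini, and bounding the inner $(t,y)$-integral by $N_{h}^{\alpha,+}(f)$ uniformly in $z$ (which is legal since the integration window $[s_{1},s_{1}+h]$ is exactly of length $h$ starting at $s_{1}$), what remains is the $z$-integral of a Gaussian with total mass $1$. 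Combining the three cases yields $|\nabla R^{\lambda}f(s,x)|\le C_{1}N_{h}^{\alpha,+}(f)$ uniformly in $(s,x)\in\hs$, which is the desired conclusion.
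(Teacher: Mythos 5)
Your proposal is correct and follows essentially the same route as the paper's proof: differentiate under the integral, apply the Gaussian gradient bound (\ref{eq2.2.2}), handle $s\ge s_{1}$ directly from the definition of $N_{h}^{\alpha,+}$, and treat $s<s_{1}$ by inserting the intermediate Gaussian via (\ref{neweq1.1.1}) and integrating out the unit-mass factor. The only cosmetic difference is that you split $s\ge s_{1}$ into two subcases, which the paper handles in one step.
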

\begin{proof}
We have
\begin{align}&|\nabla R^{\lambda}f(s,x)| \notag\\
=&|\nabla \int_{s}^{\infty}e^{-\lambda(t-s)}dt\int_{\mathbb{R}^{d}}p(s,x;t,y)f(t,y)dy| \notag\\
=&|\int_{s}^{\infty}e^{-\lambda(t-s)}dt\int_{\mathbb{R}^{d}}\nabla_{x} p(s,x;t,y)f(t,y)dy| \notag\\
\label{neweq3.1.0}\le &\int_{s}^{\infty}e^{-\lambda(t-s)}\int_{\mathbb{R}^{d}}C_{1}\frac{1}{(t-s)^{\frac{d+1}{2}}}\exp(-\alpha  \frac{|x-y|^{2}}{t-s})|f(t,y)|dydt \\
\le &\int_{s}^{\infty}\int_{\mathbb{R}^{d}}C_{1}\frac{1}{(t-s)^{\frac{d+1}{2}}}\exp(-\alpha  \frac{|x-y|^{2}}{t-s})|f(t,y)|dydt. \notag
\end{align}
If $s \ge s_{1}$, then
\begin{align}&|\nabla R^{\lambda}f(s,x)| \notag \\
\label{graesti1} \le  &\int_{s}^{s+h}\int_{\mathbb{R}^{d}}C_{1}\frac{1}{(t-s)^{\frac{d+1}{2}}}\exp(-\alpha  \frac{|x-y|^{2}}{t-s})|f(t,y)|dydt \\
\le & C_{1} N^{\alpha,+}_{h}(f). \notag
\end{align}
If $s<s_{1}$, then
\begin{align}&|\nabla R^{\lambda}f(s,x)|\notag\\
\le &\int_{s_{1}}^{s_{1}+h}\int_{\mathbb{R}^{d}}C_{1}\frac{1}{(t-s)^{\frac{d+1}{2}}}\exp(-\alpha  \frac{|x-y|^{2}}{t-s})|f(t,y)|dydt \notag\\
\label{eq2.2.3}\le &\int_{\mathbb{R}^{d}}\frac{C_{1}(2\alpha)^{\frac{d}{2}}}{[2\pi(s_{1}-s)]^{\frac{d}{2}}}\exp(-\alpha \frac{|z-x|^{2}}{s_{1}-s})dz\int_{s_{1}}^{s_{1}+h}\int_{\mathbb{R}^{d}}\frac{|f(t,y)|}{(t-s_{1})^{\frac{d+1}{2}}}\exp(-\alpha  \frac{|y-z|^{2}}{t-s_{1}})dydt \\
\le & C_{1}N^{\alpha,+}_{h}(f)\int_{\mathbb{R}^{d}}\frac{(2\alpha)^{\frac{d}{2}}}{(2\pi)^{\frac{d}{2}}(s_{1}-s)^{\frac{d}{2}}}\exp(-\alpha \frac{|z-x|^{2}}{s_{1}-s})dz  \notag\\
\le &C_{1} N^{\alpha,+}_{h}(f). \notag
\end{align}
In fact, to obtain (\ref{eq2.2.3}), we only need to apply the inequality (\ref{neweq1.1.1}). The lemma is proved. 
\end{proof}

Similar to the above lemma, we have the following  estimate for $R^{\lambda}$.
\begin{lemma}\label{lemma2.2.2.3}
Suppose $0<\alpha<1/2$. If $f \in  \CF \CK^{\alpha}_{d-1}$ and $\textup{supp}(f) \subset [s_{1},s_{1}+h]\times \mathbb{R}^{d}$ for some $s_{1}\ge 0$ and $0<h<1$, then
\[
    |R^{\lambda}f|\le  N_{h}^{\alpha,+}(f). \]
\end{lemma}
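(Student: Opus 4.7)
The plan is to mimic the two-case structure of the proof of Lemma \ref{lemma2.2.2.1}, but to replace the gradient estimate (\ref{eq2.2.2}) with the direct pointwise heat kernel bound. Because $\alpha<1/2$, we have
\[
p(s,x;t,y) \leq \frac{1}{(2\pi)^{d/2}(t-s)^{d/2}} \exp\bigl(-\alpha |x-y|^2/(t-s)\bigr),
\]
and after using $e^{-\lambda(t-s)} \leq 1$ and restricting the $t$-integral to $[\max(s,s_1), s_1+h]$ we obtain the starting estimate
\[
|R^\lambda f(s,x)| \leq \int_{\max(s,s_1)}^{s_1+h} \int_{\mathbb{R}^d} \frac{1}{(2\pi)^{d/2}(t-s)^{d/2}}\, e^{-\alpha |x-y|^2/(t-s)}\, |f(t,y)|\, dy\, dt.
\]

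The discrepancy with the definition of $N^{\alpha,+}_h(f)$ is the power of the time increment: $d/2$ here versus $(d+1)/2$ there. This is exactly where the hypothesis $h<1$ enters: for any $r\in (0,h]$, since $r^{1/2}\leq h^{1/2}<1\leq (2\pi)^{d/2}$, one has the pointwise bound $\frac{1}{(2\pi)^{d/2} r^{d/2}} \leq \frac{1}{r^{(d+1)/2}}$.

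If $s\geq s_1$, then $t-s\leq h<1$ throughout the range of integration, and applying this observation pointwise gives $|R^\lambda f(s,x)|\leq N^{\alpha,+}_h(f)$ directly. If $s<s_1$, then $t-s$ may exceed $1$, so the direct bound fails; instead I factor the heat kernel at the intermediate time $s_1$ using the Chapman--Kolmogorov identity for Gaussians,
\[
\frac{1}{(2\pi)^{d/2}(t-s)^{d/2}}\, e^{-\alpha|x-y|^2/(t-s)} = \int_{\mathbb{R}^d} \frac{(2\alpha)^{d/2}}{(2\pi)^{d/2}(s_1-s)^{d/2}}\, e^{-\alpha|z-x|^2/(s_1-s)} \cdot \frac{1}{(2\pi)^{d/2}(t-s_1)^{d/2}}\, e^{-\alpha|y-z|^2/(t-s_1)}\, dz.
\]
After substituting this into the starting estimate and swapping the order of integration, the inner integral in $(t,y)$ has $t-s_1\leq h<1$, so the observation above upgrades $(t-s_1)^{d/2}$ in the denominator to $(t-s_1)^{(d+1)/2}$ and bounds that inner piece (uniformly in $z$) by $N^{\alpha,+}_h(f)$. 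The remaining outer integral over $z$ of $\frac{(2\alpha)^{d/2}}{(2\pi)^{d/2}(s_1-s)^{d/2}}\, e^{-\alpha|z-x|^2/(s_1-s)}$ is a normalized Gaussian density and equals $1$, yielding $|R^\lambda f(s,x)|\leq N^{\alpha,+}_h(f)$.

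The only nontrivial step is the constant bookkeeping: the $(2\pi)^{d/2}$ factor in the heat kernel, together with $h<1$, supplies precisely the $r^{-1/2}$ needed to upgrade $r^{d/2}$ to $r^{(d+1)/2}$, while the $(2\alpha)^{d/2}$ extracted from the Chapman--Kolmogorov splitting is exactly what normalizes the outer Gaussian to mass $1$, so no spurious multiplicative constant survives and the bound really is $N^{\alpha,+}_h(f)$ rather than a constant multiple of it.
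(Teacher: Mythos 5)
Your proof is correct and follows essentially the same route as the paper: the paper likewise reduces the lemma to the two-case argument of Lemma \ref{lemma2.2.2.1}, replacing the gradient bound (\ref{eq2.2.2}) by the observation that $p(s,x;t,y)\le (t-s)^{-(d+1)/2}\exp(-\alpha|x-y|^{2}/(t-s))$ when $0<t-s<1$, with the Chapman--Kolmogorov splitting at $s_1$ handling the case $s<s_1$. Your constant bookkeeping (the $(2\pi)^{d/2}$ absorbing the extra $(t-s)^{1/2}$ and the outer Gaussian integrating to $1$) is accurate.
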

\begin{proof}The proof is similar to Lemma \ref{lemma2.2.2.1}. We only need to note that if $0<t-s<1$, then
\[
p(s,x;t,y) \le \frac{1}{(t-s)^{\frac{d+1}{2}}}\exp(-\alpha \frac{|x-y|^{2}}{t-s})   \]
for all $x,y \in \Rd$.  
\end{proof}

The following lemma is a simple consequence of the inequality (\ref{neweq3.1.0}).
\begin{lemma}\label{lemma2.2.2.2}For each $\lambda >0$, there exists a constant $C_{\lambda}>0$ such that
\[
    |\nabla R^{\lambda}g(s,x)|\le C_{\lambda}  \|g\|_{\infty}\]
for all $(s,x) \in \hs$ and  $g \in \mathcal{B}_{b}([0,\infty)\times \mathbb{R}^{d})$.
\end{lemma}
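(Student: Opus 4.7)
The plan is to specialize inequality (\ref{neweq3.1.0}) from the proof of Lemma \ref{lemma2.2.2.1} to the bounded function $g$ and then evaluate the resulting Gaussian-type integral explicitly, taking advantage of the exponential factor $e^{-\lambda(t-s)}$ that was discarded in the earlier estimate.

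More precisely, the same chain of inequalities leading to (\ref{neweq3.1.0}) is valid for any bounded measurable $g$ (one may differentiate under the integral sign because the Gaussian kernel and its gradient are smooth and integrable in $(t,y)$ on $(s,\infty)\times\Rd$ for each fixed $(s,x)$). Replacing $|f(t,y)|$ by $\|g\|_\infty$ in (\ref{neweq3.1.0}) yields
\[
|\nabla R^{\lambda}g(s,x)|\le C_{1}\,\|g\|_{\infty}\int_{s}^{\infty}e^{-\lambda(t-s)}\int_{\Rd}\frac{1}{(t-s)^{\frac{d+1}{2}}}\exp\Big(-\alpha\frac{|x-y|^{2}}{t-s}\Big)dy\,dt.
\]
The inner integral over $y$ is a standard Gaussian integral equal to $(\pi(t-s)/\alpha)^{d/2}$, so after substituting $u=t-s$ we obtain
\[
|\nabla R^{\lambda}g(s,x)|\le C_{1}\,\|g\|_{\infty}\Big(\frac{\pi}{\alpha}\Big)^{d/2}\int_{0}^{\infty}\frac{e^{-\lambda u}}{u^{1/2}}\,du=C_{1}\,\|g\|_{\infty}\Big(\frac{\pi}{\alpha}\Big)^{d/2}\sqrt{\frac{\pi}{\lambda}},
\]
where the last equality uses the classical identity $\int_{0}^{\infty}u^{-1/2}e^{-\lambda u}du=\sqrt{\pi/\lambda}$. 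Setting $C_{\lambda}:=C_{1}(\pi/\alpha)^{d/2}\sqrt{\pi/\lambda}$, which is independent of $(s,x)$ and $g$, finishes the argument.

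There is essentially no obstacle here; the only point worth mentioning is that, in contrast to Lemma \ref{lemma2.2.2.1} where the Kato-class smallness of $f$ was needed to absorb the singularity of $(t-s)^{-(d+1)/2}$ near $t=s$, for a bounded $g$ we pay for this singularity using the factor $e^{-\lambda(t-s)}$, which forces $\lambda>0$ and is why $C_{\lambda}$ blows up as $\lambda\downarrow 0$.
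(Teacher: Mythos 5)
Your proposal is correct and follows exactly the route the paper intends: the paper simply declares the lemma ``a simple consequence of the inequality (\ref{neweq3.1.0})'', and you carry out precisely that computation, bounding $|g|$ by $\|g\|_{\infty}$, evaluating the Gaussian integral in $y$, and using $\int_{0}^{\infty}u^{-1/2}e^{-\lambda u}\,du=\sqrt{\pi/\lambda}$ to absorb the remaining $(t-s)^{-1/2}$ singularity. Your closing remark correctly identifies why boundedness of $g$ plus the factor $e^{-\lambda(t-s)}$ replaces the Kato-class hypothesis used in Lemma \ref{lemma2.2.2.1}.
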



\section{Existence and Uniqueness of Weak Solutions: Local Case}
Instead of dealing with weak solutions to the SDE (\ref{SDE1}) directly, we use the equivalent martingale-problem formulation. Namely, we shall prove that the martingale problem for the generator
\[
 L_{t}=\frac{1}{2}\triangle + b(t,\cdot) \cdot \nabla
\] is well-posed.

One of the advantages to use the martingale-problem approach is that the martingale problem can be reduced to local considerations. In other words, we can first assume that the support of the drift $b$ is compact and then construct a ``local solution" to the SDE (\ref{SDE1}). Then we use the ``glueing argument" to get a global solution.

Throughout this section we assume the following assumption holds, that is, we confine ourselves to the local case. The general case will be discussed in the next section.
\begin{assumption}\label{assumption2.2.3} The drift $b$ satisfies $|b| \in \CF \CK_{d-1}^{\alpha}$ for some $\alpha \in (0, 1/2)$. Furthermore, there exists $(s_{1},x_{1}) \in \hs$ such that
\[
\textup{supp} \big(|b|\big ) \subset [s_{1},s_{1}+\epsilon _{1}]\times \big \{x \in \Rd: |x-x_{1} |\le 1 \big \}, \]
and \begin{equation}\label{neweqass2}
N^{\alpha,+}_{2\epsilon_{1}}(|b|)<\frac{1}{2\kappa C_{1}},\end{equation}
where $\kappa:=d^{3/2}$, $\epsilon_{1}< 1/2$ and the constant $C_{1}$ is taken from (\ref{eq2.2.2}).
 \end{assumption}

We first consider smooth approximations of the singular drift $b$. Given a nonnegative function $\phi \in C_{0}^{\infty}(\mathbb{R}^{d+1})$ with $\int_{\mathbb{R}^{d+1}} \phi(\xi)d\xi=1$ and \begin{equation}\label{supportofphi}
\textrm{supp}(\phi) \subset \{ \xi \in \mathbb{R}^{d+1}: |\xi| \le \frac{\epsilon_1}{2}  \},
\end{equation}
let \[
\phi_{n}(\xi)=n^{(d+1)}\phi(n\xi)\]and define \[b_{n}:=b \ast \phi_{n}=(b^{1} \ast \phi_{n}, \cdots, b^{d} \ast \phi_{n}).\]

\begin{remark}By Lemma \ref{nlemma2.1.2}, it is easily seen that
\begin{align}
N^{\alpha,+}_{h}(|b_{n}|) \le & \sqrt{d}\sum_{i=1}^{d}N^{\alpha,+}_{h}(|b^i_{n}|) \le \sqrt{d}\sum_{i=1}^{d}N^{\alpha,+}_{h}(|b^i|) \notag \\
\label{neq2.2.3.1} \le & d^{\frac{3}{2}} N_{h}^{\alpha,+}(|b|)=\kappa N_{h}^{\alpha,+}(|b|)
\end{align}
for all $h>0$. Furthermore, it follows from (\ref{supportofphi}) and Assumption \ref{assumption2.2.3} that
\begin{equation}\label{supportofbn}
\textrm{supp}(b_{n}) \subset [s_{1}-\frac{\epsilon _{1}}{2},s_{1}+\frac{3\epsilon _{1}}{2}]\times K,
\end{equation}
where $K\subset\Rd$ is compact.
\end{remark}

\begin{remark}Since both $b_{n}$ and $b$ have compact support, by Proposition \ref{nlemma2.1.3}, we know that
 \begin{equation}\label{neq2.2.3}
   \lim_{n \to \infty}N_{h}^{\alpha,+}(|b_{n}-b|)=0
 \end{equation}
 for every $h>0$.
 \end{remark}

Since $b_{n}$ is smooth and has compact support, for each starting point $(s,x) \in \hs$, there exists a unique probability measure $\mathbf{P}^{s,x}_{n}$ on $\big(\Omega=C([0,\infty);\mathbb{R}^{d}),\mathcal{M}\big)$ that solves the martingale problem for the generator
\[
\frac{1}{2}\triangle+b_{n}(t,\cdot) \cdot \nabla. \]
For any $\lambda>0$ and any bounded measurable function $f$ on $\hs$, define
\[
S^{\lambda}_{n}f(s,x):=\mathbf{E}^{s,x}_{n}\Big[\int_{s}^{\infty}e^{-\lambda(t-s)}f(t,X_{t})dt\Big], \quad (s,x) \in \hs, \]
where $\mathbf{E}^{s,x}_{n}[\cdot]$ means taking expectation with respect to the measure $\mathbf{P}^{s,x}_{n}$ on the path space $\big(\Omega=C([0,\infty);\mathbb{R}^{d}),\mathcal{M}\big)$.

Recall that $R^{\lambda}$ is the space-time resolvent for Brownian motion defined in (\ref{resbm}). For any $f \in \mathcal{B}_{b}([0,\infty)\times \mathbb{R}^{d})$, since the first order partial derivatives of $R^{\lambda}f$ exist and are continuous,  we can define the operator $BR^{\lambda}$ as follows
\begin{equation}\label{defiofBR}
    BR^{\lambda}f(s,x):=\sum_{i=1}^{d}b^{i}(s,x) \frac{\partial R^{\lambda}f}{\partial x_{i}}(s,x), \quad (s,x) \in \hs. \end{equation}
Similarly, for $f \in \mathcal{B}_{b}([0,\infty)\times \mathbb{R}^{d})$ define
\[
    B_{n}R^{\lambda}f(s,x):=\sum_{i=1}^{d}b^{i}_{n}(s,x) \frac{\partial R^{\lambda}f}{\partial x_{i}}(s,x), \quad (s,x) \in \hs . \]

\begin{lemma}\label{newlemmasn}If $g \in \mathcal{B}_{b}([0,\infty)\times \mathbb{R}^{d})$, then
\begin{equation}\label{eqressn}
    S_{n}^{\lambda}g=\sum_{k=0}^{\infty}R^{\lambda}(B_{n}R^{\lambda})^{k}g,\end{equation}
where the series on the right-hand side of (\ref{eqressn}) converges uniformly on $\hs$.
\end{lemma}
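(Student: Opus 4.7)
My plan is a perturbation/Neumann-series argument built on the resolvent identity
\[
S_n^\lambda g \;=\; R^\lambda g + S_n^\lambda(B_n R^\lambda g),
\]
which I would establish first. Since $b_n$ is smooth with compact support, the SDE $dX_t=dW_t+b_n(t,X_t)\,dt$ admits a unique strong solution, so $\mathbf{P}_n^{s,x}$ is a genuine diffusion measure. For $g\in C_0^\infty(\hs)$ the function $u:=R^\lambda g$ is bounded and $C^{1,2}$ with bounded derivatives, and satisfies $(\lambda-\partial_t-\tfrac12\triangle)u=g$ classically; applying It\^o's formula to $e^{-\lambda(t-s)}u(t,X_t)$ under $\mathbf{P}_n^{s,x}$, taking expectations and letting $t\to\infty$ (using boundedness of $u$) produces the identity with $b_n\cdot\nabla u=B_n R^\lambda g$ appearing in the drift. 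The extension to arbitrary $g\in\mathcal{B}_b(\hs)$ follows by a bounded-convergence argument, since $R^\lambda$, $B_n R^\lambda$ and $S_n^\lambda$ all respect uniformly bounded pointwise limits (Lemma~\ref{lemma2.2.2.2} providing the uniform gradient control needed to pass $B_n R^\lambda$ through the limit).

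Iterating the identity produces, for every $N\ge 1$,
\[
S_n^\lambda g \;=\; \sum_{k=0}^{N-1} R^\lambda(B_n R^\lambda)^k g \;+\; S_n^\lambda(B_n R^\lambda)^N g,
\]
so it suffices to exhibit geometric decay of each term and of the remainder. I set $\rho:=C_1 N^{\alpha,+}_{2\epsilon_1}(|b_n|)$; by (\ref{neq2.2.3.1}) and (\ref{neweqass2}) we have $\rho<1/2$. For every $k\ge 1$, the function $(B_n R^\lambda)^k g$ inherits the support of $b_n$, which by (\ref{supportofbn}) is contained in a time-strip of length $2\epsilon_1<1$, and one has the pointwise bound
\[
|(B_n R^\lambda)^k g|\;\le\;|b_n|\cdot \|\nabla R^\lambda(B_n R^\lambda)^{k-1}g\|_\infty ,
\]
whence $N^{\alpha,+}_{2\epsilon_1}((B_n R^\lambda)^k g)\le \|\nabla R^\lambda(B_n R^\lambda)^{k-1}g\|_\infty\,N^{\alpha,+}_{2\epsilon_1}(|b_n|)$. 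Combining this with Lemma~\ref{lemma2.2.2.1} (for $h=2\epsilon_1$) and inducting on $k$, with base case $\|\nabla R^\lambda g\|_\infty\le C_\lambda\|g\|_\infty$ from Lemma~\ref{lemma2.2.2.2}, yields
\[
\|\nabla R^\lambda(B_n R^\lambda)^k g\|_\infty \;\le\; C_\lambda\|g\|_\infty\,\rho^k \qquad (k\ge 0).
\]
Since $2\epsilon_1<1$, Lemma~\ref{lemma2.2.2.3} then gives the complementary bound $\|R^\lambda(B_n R^\lambda)^k g\|_\infty \le (C_\lambda/C_1)\|g\|_\infty\rho^k$ for $k\ge 1$, establishing uniform convergence of the series on $\hs$.

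For the remainder, the boundedness of $b_n$ (smoothness plus compact support) together with the gradient bound above gives $\|(B_n R^\lambda)^N g\|_\infty \le \|b_n\|_\infty C_\lambda\|g\|_\infty\,\rho^{N-1}$, and the trivial estimate $\|S_n^\lambda h\|_\infty\le\|h\|_\infty/\lambda$ then forces $S_n^\lambda(B_n R^\lambda)^N g\to 0$ uniformly as $N\to\infty$, which closes the argument. The main technical hurdle is the first step: for bounded Borel $g$ the second spatial derivatives of $R^\lambda g$ need not exist in the classical sense, so rigorously obtaining the resolvent identity requires the smoothing/bounded-convergence detour sketched above, the relevant continuity passing through the uniform gradient estimate of Lemma~\ref{lemma2.2.2.2}.
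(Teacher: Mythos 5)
Your proof is correct, and the series/iteration part coincides in substance with the paper's argument: you iterate the one-step identity, use the support confinement (\ref{supportofbn}) together with Lemmas \ref{lemma2.2.2.1} and \ref{lemma2.2.2.3} and the smallness hypothesis (\ref{neweqass2}) to get geometric decay (your induction runs on $\|\nabla R^{\lambda}(B_{n}R^{\lambda})^{k}g\|_{\infty}$ where the paper's runs on $N^{\alpha,+}_{2\epsilon_{1}}((B_{n}R^{\lambda})^{k}g)$ --- these differ only by the factor $C_{1}$), and you kill the remainder using the boundedness of $b_{n}$; your estimate $\|S_{n}^{\lambda}h\|_{\infty}\le \|h\|_{\infty}/\lambda$ is in fact a slightly cleaner way to do this than the paper's bound via $S_{n}^{\lambda}(|b_{n}|)$. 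The genuine difference lies in how the one-step identity $S_{n}^{\lambda}g=R^{\lambda}g+S_{n}^{\lambda}B_{n}R^{\lambda}g$ is obtained. The paper works analytically: it invokes Duhamel's formula $q_{n}=p+\int q_{n}\,b_{n}\cdot\nabla p$ for the perturbed transition density and justifies the interchange of integrals by Zhang's Gaussian upper bound for $q_{n}$, which yields the identity for all bounded measurable $g$ in one stroke. You work probabilistically: It\^o's formula applied to $e^{-\lambda(t-s)}R^{\lambda}g(t,X_{t})$ for $g\in C_{0}^{\infty}$, followed by an extension to $\mathcal{B}_{b}(\hs)$. Both are sound; the It\^o route avoids citing heat-kernel results for the approximating diffusions but shifts the burden to the closure argument, which you only sketch --- to make it airtight you should record that the class of $g$ for which the identity holds is a vector space containing the constants and $C_{0}^{\infty}(\hs)$ and is closed under bounded pointwise limits (dominated convergence for $S_{n}^{\lambda}B_{n}R^{\lambda}$ being supplied by $\|b_{n}\|_{\infty}$ together with the bound (\ref{eq2.2.2})), so that the functional monotone class theorem applies; this is the same kind of argument the paper itself uses in Proposition \ref{thm2.3.200}.
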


\begin{proof}Since $b_{n}$ is smooth and has compact support, Brownian motion with such a drift $b_{n}$ has a transition density function $q_{n}(s,x;t,y)$. Recall that $p(s,x;t,y)$ is the transition density function of Brownian motion. Then by Duhamel's formula (see \cite[p.~388]{MR1457736}), we have
\begin{equation}\label{neweqduhamel}
    q_{n}(s,x;t,y)=p(s,x;t,y)+\int_{s}^{t}\int_{\mathbb{R}^{d}}q_{n}(s,x;\tau,z)b_{n}(\tau,z)\cdot\nabla_{z} p(\tau,z;t,y)dzd\tau .\end{equation}
For a detailed proof of (\ref{neweqduhamel}), the reader is referred to \cite[p.~647]{MR2247841}.

By (\ref{neweqduhamel}), we can calculate the difference between $S_{n}^{\lambda}$ and $R^{\lambda}$. More precisely, if $f$ is bounded and measurable, then
\begin{align*}
& S_{n}^{\lambda}f(s,x)-R^{\lambda}f(s,x) \\
=& \int_{s}^{\infty}\int_{\mathbb{R}^{d}}e^{-\lambda(t-s)}q_{n}(s,x;t,y)f(t,y)dydt-\int_{s}^{\infty}\int_{\mathbb{R}^{d}}e^{-\lambda(t-s)}p(s,x;t,y)f(t,y)dydt \\
=& \int_{s}^{\infty}\int_{\mathbb{R}^{d}}e^{-\lambda(t-s)}(q_{n}(s,x;t,y)-p(s,x;t,y))f(t,y)dydt \\
=& \int_{s}^{\infty}\int_{\mathbb{R}^{d}}e^{-\lambda(t-s)}f(t,y)(\int_{s}^{t}\int_{\mathbb{R}^{d}}q_{n}(s,x;\tau,z)b_{n}(\tau,z)\cdot \nabla_{z}p(\tau,z;t,y)dzd\tau ) dydt.
\end{align*}
Since $b_n$ is of compact support, there exists $T>s$ such that $\textrm{supp}(b_n) \subset [0,T]\times \Rd$. According to \cite[Theorem A]{MR1457736}, we can find constants $C, \beta>0$ such that
\begin{equation}\label{gaussianbfqn}
|q_n(s,x;\tau,z)| \le C(\tau-s)^{-\frac{d}{2}}\exp(-\beta  \frac{|x-z|^{2}}{\tau-s}), \quad 0 \le s,\tau \le T,\ x,z \in \Rd.
\end{equation}
Here we can obviously choose $\beta <\alpha$. It follows from (\ref{gaussianbfqn}), (\ref{eq2.2.2}) and \cite[Lemma~ 3.1(a)]{MR1457736} that
\begin{align*}
& \int_{s}^{t}\int_{\mathbb{R}^{d}}|q_{n}(s,x;\tau,z)b_{n}(\tau,z)\cdot \nabla_{z}p(\tau,z;t,y)|dzd\tau \\
\le & \int_{s}^{t\wedge T}\int_{\mathbb{R}^{d}}|q_{n}(s,x;\tau,z)b_{n}(\tau,z)\cdot \nabla_{z}p(\tau,z;t,y)|dzd\tau \\
\le &C'(t-s)^{-\frac{d}{2}}\exp(-\beta  \frac{|x-y|^{2}}{t-s}),
\end{align*}
where $C'>0$ is a constant. Therefore, we can apply Fubini's theorem to get
\begin{align}
& S_{n}^{\lambda}f(s,x)-R^{\lambda}f(s,x) \notag \\
\label{neweqcompare1}=& \int_{s}^{\infty}\int_{\mathbb{R}^{d}}q_{n}(s,x;\tau,z)dzd\tau(\int_{\tau}^{\infty}\int_{\mathbb{R}^{d}} e^{-\lambda(t-s)}f(t,y)  b_{n}(\tau,z)\cdot\nabla_{z}p(\tau,z;t,y)dydt). \end{align}
For $\tau \ge s$ and $z \in  \Rd$, we have
\begin{align}
&\int_{\tau}^{\infty}\int_{\mathbb{R}^{d}} e^{-\lambda(t-s)}f(t,y)  b_{n}(\tau,z)\cdot\nabla_{z}p(\tau,z;t,y)dydt \notag\\
=& e^{-\lambda(\tau-s)}b_{n}(\tau,z)\cdot \nabla_{z}\big(\int_{\tau}^{\infty}\int_{\mathbb{R}^{d}} e^{-\lambda(t-\tau)}p(\tau,z;t,y)f(t,y)dydt\big) \notag\\
\label{neweqcompare2}=& e^{-\lambda(\tau-s)}B_{n}R^{\lambda}f(\tau,z).
\end{align}
Substituting (\ref{neweqcompare2}) in (\ref{neweqcompare1}), we obtain
\begin{align*}
S_{n}^{\lambda}f(s,x)-R^{\lambda}f(s,x)=&\int_{s}^{\infty}\int_{\mathbb{R}^{d}}e^{-\lambda(\tau-s)}q_{n}(s,x;\tau,z)B_{n}R^{\lambda}f(\tau,z)dzd\tau \\
=& S_{n}^{\lambda}B_{n}R^{\lambda}f(s,x).
\end{align*}
Thus, we have shown that
\[
 S_{n}^{\lambda}f-R^{\lambda}f=S_{n}^{\lambda}B_{n}R^{\lambda}f, \quad f \in \CB_b(\hs). \]
For any bounded measurable function $g$ on $\hs$, taking $f=B_{n}R^{\lambda}g$ in the above formula, we get
\[
     S_{n}^{\lambda}B_{n}R^{\lambda}g-R^{\lambda}B_{n}R^{\lambda}g=S_{n}^{\lambda}B_{n}R^{\lambda}B_{n}R^{\lambda}g, \]
and thus
\begin{align*}
    S_{n}^{\lambda}g=& R^{\lambda}g+S_{n}^{\lambda}B_{n}R^{\lambda}g \\
    =& R^{\lambda}g+R^{\lambda}B_{n}R^{\lambda}g+S_{n}^{\lambda}(B_{n}R^{\lambda})^{2}g.
\end{align*}
Similarly, after $i$ steps, we obtain

\begin{equation}\label{eqsnforn}
 S_{n}^{\lambda}g=\sum_{k=0}^{i}R^{\lambda}(B_{n}R^{\lambda})^{k}g+S_{n}^{\lambda}(B_{n}R^{\lambda})^{i+1}g.
\end{equation}
In order to show that the last term on the right-hand side of (\ref{eqsnforn}) converges to 0 as $i \to \infty$, we first need to prove the following claim.

\begin{claim}  We have $(B_{n}R^{\lambda})^{k}g \in \CF \CK^{\alpha}_{d-1}$
and
\begin{equation}\label{neq2.2.2.20}
    N^{\alpha,+}_{2\epsilon_{1}}((B_{n}R^{\lambda})^{k}g)\le C_{\lambda}\|g\|_{\infty}(\frac{1}{2})^{k}\end{equation}
for all $k \in \BN$.
\end{claim}
By (\ref{neweqass2}) and (\ref{neq2.2.3.1}), we have
\begin{equation}\label{eqclaimk1}
N^{\alpha,+}_{2\epsilon_{1}}(|b_{n}|)\le \kappa N^{\alpha,+}_{2\epsilon_{1}}(|b|)\le \frac{1}{2C_{1}}.\end{equation}
When $k=1$, by Lemma \ref{lemma2.2.2.2},
\begin{equation}\label{eqclaimk2} |B_{n}R^{\lambda}g|\le |b_{n}||\nabla R^{\lambda}g|
\le |b_{n}|C_{\lambda}\|g\|_{\infty}. \end{equation}
Thus $B_{n}R^{\lambda}g \in \CF \CK^{\alpha}_{d-1}$. Since the constant $C_1$ appearing in (\ref{eq2.2.2}) is greater than 1, by (\ref{eqclaimk1}) and (\ref{eqclaimk2}), we have
\[
    N^{\alpha,+}_{2\epsilon_{1}}(B_{n}R^{\lambda}g)\le C_{\lambda}\|g\|_{\infty} N^{\alpha,+}_{2\epsilon_{1}}(|b_{n}|)\le \frac{1}{2} C_{\lambda}\|g\|_{\infty}.\]
Suppose that the claim is true for $k$. It follows from (\ref{supportofbn}) that
\begin{equation}\label{supportofBn}
 \textrm{supp}((B_{n}R^{\lambda})^{k}g) \subset [s_{1}-\frac{\epsilon _{1}}{2},s_{1}+\frac{3\epsilon _{1}}{2}]\times K.
 \end{equation}
By Lemma \ref{lemma2.2.2.1},
 \begin{align*}
     |(B_{n}R^{\lambda})^{k+1}g|\le & |b_{n}| |\nabla R^{\lambda}(B_{n}R^{\lambda})^{k}g| \\
     \le & C_{1} |b_{n}|  N^{\alpha,+}_{2\epsilon_{1}}((B_{n}R^{\lambda})^{k}g)\\
    \le & C_{1} C_{\lambda}\|g\|_{\infty}|b_{n}|  (\frac{1}{2})^{k}.
 \end{align*}
Therefore,
\[
    N^{\alpha,+}_{2\epsilon_{1}}((B_{n}R^{\lambda})^{k+1}g)\le C_{1} C_{\lambda}\|g\|_{\infty} (\frac{1}{2})^{k} \frac{1}{2C_{1}} \le C_{\lambda}\|g\|_{\infty} (\frac{1}{2})^{k+1},\] and the claim is proved.

Noting (\ref{supportofBn}) and using Lemma \ref{lemma2.2.2.1} again,
\begin{align} |S_{n}^{\lambda}(B_{n}R^{\lambda})^{k+1}g| =&|S_{n}^{\lambda}(b_{n} \cdot \nabla R^{\lambda}(B_{n}R^{\lambda})^{k}g)| \notag\\
\le & S_{n}^{\lambda}(|b_{n}||\nabla R^{\lambda}(B_{n}R^{\lambda})^{k}g|) \notag\\
\le & C_{1}N^{\alpha,+}_{2\epsilon_{1}}((B_{n}R^{\lambda})^{k}g)S_{n}^{\lambda}(|b_{n}|)\notag\\
\label{eqsnbound}\le & C_{1}C_{\lambda}\|g\|_{\infty}(\frac{1}{2})^{k}S_{n}^{\lambda}(|b_{n}|).
\end{align}
Since $|b_n|$ is smooth and has compact support, the term $S_{n}^{\lambda}(|b_{n}|)$ is bounded and thus
\begin{equation}\label{snkcon}
\lim_{k \to \infty}|S_{n}^{\lambda}(B_{n}R^{\lambda})^{k+1}g| = 0.
\end{equation}
Similarly to (\ref{eqsnbound}), we get
\[
   |R^{\lambda}(B_{n}R^{\lambda})^{k}g|\le C_{1}C_{\lambda}\|g\|_{\infty}(\frac{1}{2})^{k-1}R^{\lambda}(|b_{n}|).\]
Noting that $\epsilon_{1}< 1/2$ and using Lemma \ref{lemma2.2.2.3}, (\ref{supportofbn}) and (\ref{eqclaimk1}),
\begin{equation}\label{rlambdabn}
    |R^{\lambda}(|b_{n}|)| \le N^{\alpha,+}_{2\epsilon_{1}}(|b_{n}|) \le  \frac{1}{2C_{1}}.\end{equation}
Therefore, \begin{equation}\label{neweqrn}
   |R^{\lambda}(B_{n}R^{\lambda})^{k}g| \le C_{\lambda}\|g\|_{\infty}(\frac{1}{2})^{k}. \end{equation}
Combining (\ref{eqsnforn}), (\ref{snkcon}) and (\ref{neweqrn}) yields our result. 
\end{proof}

\begin{remark}If we check the proof of Lemma \ref{newlemmasn}, the essential conditions that we used to ensure the uniform convergence of the series in (\ref{eqressn}) are (\ref{supportofbn}) and
\[
    N^{\alpha,+}_{2\epsilon_{1}}\big(|b_{n}|\big)<\frac{1}{2C_{1}}.\]
By Assumption \ref{assumption2.2.3}, we can actually replace $b_n$ by $b$ in the above arguments. Therefore, given any bounded measurable function $g$, we can define $S^{\lambda}g$ as follows:
\begin{equation}\label{neweqdefiofslambda}
    S^{\lambda}g:=\sum_{k=0}^{\infty}R^{\lambda}(BR^{\lambda})^{k}g. \end{equation} Moreover, for each term of the above series, we have
\begin{equation}\label{neweqnormofBRk}
    N^{\alpha,+}_{2\epsilon_{1}}((BR^{\lambda})^{k}g)\le C_{\lambda}\|g\|_{\infty} (\frac{1}{2})^{k}
\end{equation}
and
\begin{equation}\label{neweqr}
     |R^{\lambda}(BR^{\lambda})^{k}g|\le C_{\lambda}\|g\|_{\infty}(\frac{1}{2})^{k}.\end{equation}
\end{remark}

We now prove that $S^{\lambda}$ is the limit of $S^{\lambda}_{n}$ as $n \to \infty$.
\begin{lemma}\label{newlemmasc}For each bounded measurable function $g$ on $\hs$, $S_{n}^{\lambda}g$ converges to $S^{\lambda}g$ uniformly on $ [0,\infty)\times \mathbb{R}^{d}$ as $n \to \infty$.
\end{lemma}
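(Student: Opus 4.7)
The plan is to compare the two series representations termwise: $S^{\lambda}_n g = \sum_{k=0}^{\infty} R^{\lambda}(B_n R^{\lambda})^k g$ from Lemma \ref{newlemmasn} and $S^{\lambda} g = \sum_{k=0}^{\infty} R^{\lambda}(BR^{\lambda})^k g$ from (\ref{neweqdefiofslambda}). Both are uniformly convergent with $k$-th term $\le C_{\lambda}\|g\|_{\infty}(1/2)^k$. It therefore suffices to prove a bound of the form $|R^{\lambda}(B_n R^{\lambda})^k g - R^{\lambda}(BR^{\lambda})^k g| \le a_k \eta_n$, uniformly in $(s,x) \in \hs$, where $\eta_n := N^{\alpha,+}_{2\epsilon_1}(|b_n-b|) \to 0$ by (\ref{neq2.2.3}) and the coefficients $a_k$ are summable in $k$.

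The central algebraic identity is the telescoping formula
\begin{equation*}
(B_n R^{\lambda})^k - (BR^{\lambda})^k = \sum_{j=0}^{k-1} (B_n R^{\lambda})^{k-1-j}\,(B_n - B)\,R^{\lambda}(BR^{\lambda})^j,
\end{equation*}
which reduces the estimate to controlling $R^{\lambda}(B_n R^{\lambda})^{m}\phi_{n,j}$ with $m = k-1-j$ and $\phi_{n,j} := (b_n-b)\cdot\nabla R^{\lambda}(BR^{\lambda})^j g$. Using Lemma \ref{lemma2.2.2.2} for $j=0$ and Lemma \ref{lemma2.2.2.1} combined with (\ref{neweqnormofBRk}) for $j\ge 1$, one has $|\nabla R^{\lambda}(BR^{\lambda})^j g| \le C_1 C_{\lambda}\|g\|_{\infty}(1/2)^j$, and therefore $|\phi_{n,j}| \le C_1 C_{\lambda}\|g\|_{\infty}(1/2)^j |b_n - b|$, giving $N^{\alpha,+}_{2\epsilon_1}(\phi_{n,j}) \le C_1 C_{\lambda}\|g\|_{\infty}(1/2)^j \eta_n$.

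Next I would iterate exactly as in the claim inside the proof of Lemma \ref{newlemmasn}: each application of $B_n R^{\lambda}$ multiplies the $N^{\alpha,+}_{2\epsilon_1}$-norm by at most $C_1 N^{\alpha,+}_{2\epsilon_1}(|b_n|) \le 1/2$ (by (\ref{neq2.2.3.1}) and (\ref{neweqass2})), while a final application of $R^{\lambda}$ contributes a harmless factor via Lemma \ref{lemma2.2.2.3} (applicable since (\ref{supportofbn}) puts the support of $\phi_{n,j}$ in a common time slab of length $2\epsilon_1 < 1$). The net result is the uniform bound $|R^{\lambda}(B_n R^{\lambda})^{m}\phi_{n,j}| \le N^{\alpha,+}_{2\epsilon_1}(\phi_{n,j})/2^{m}$ for every $m \ge 0$.

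Assembling the pieces, the $k$-th discrepancy is bounded uniformly in $(s,x)$ by
\begin{equation*}
\sum_{j=0}^{k-1} \frac{N^{\alpha,+}_{2\epsilon_1}(\phi_{n,j})}{2^{k-1-j}} \;\le\; k\,C_1 C_{\lambda}\|g\|_{\infty}\,(1/2)^{k-1}\,\eta_n,
\end{equation*}
and summing over $k$ yields $|S^{\lambda}_n g - S^{\lambda} g| \le 4 C_1 C_{\lambda}\|g\|_{\infty}\,\eta_n \to 0$. The main obstacle is purely bookkeeping: one must verify that at each level of the telescope the support of the intermediate function sits inside a common time slab of length at most $2\epsilon_1 < 1$ so that Lemmas \ref{lemma2.2.2.1} and \ref{lemma2.2.2.3} apply with a uniform constant. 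This is guaranteed by (\ref{supportofbn}) together with the support of $b$ from Assumption \ref{assumption2.2.3}, so no further work is needed.
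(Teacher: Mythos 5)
Your proposal is correct and follows essentially the same route as the paper: both arguments compare the series $\sum_k R^{\lambda}(B_nR^{\lambda})^k g$ and $\sum_k R^{\lambda}(BR^{\lambda})^k g$ term by term, using Lemmas \ref{lemma2.2.2.1}--\ref{lemma2.2.2.2}, the contraction factor $C_1N^{\alpha,+}_{2\epsilon_1}(|b_n|)\le 1/2$, and $N^{\alpha,+}_{2\epsilon_1}(|b_n-b|)\to 0$ from (\ref{neq2.2.3}). The only difference is presentational: the paper runs an induction on $k$ controlling $N^{\alpha,+}_{2\epsilon_1}[(B_nR^{\lambda})^kg-(BR^{\lambda})^kg]$ and then splits off a uniformly small tail, whereas your telescoping identity is the unrolled form of that same recursion and has the minor advantage of yielding the explicit rate $4C_1C_{\lambda}\|g\|_{\infty}N^{\alpha,+}_{2\epsilon_1}(|b_n-b|)$ in one step.
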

\begin{proof}We first show that
\[
    \lim_{n \to \infty}R^{\lambda}B_{n}R^{\lambda}g(s,x)=R^{\lambda}BR^{\lambda}g(s,x),\]
where the convergence is uniform with respect to $(s,x) \in [0,\infty)\times \mathbb{R}^{d}$. In fact, by Lemma \ref{lemma2.2.2.3} and Lemma \ref{lemma2.2.2.2},
\begin{align*}& |R^{\lambda}B_{n}R^{\lambda}g(s,x)-R^{\lambda}BR^{\lambda}g(s,x)|\\
=& |R^{\lambda}(B_{n}-B)R^{\lambda}g(s,x)|\\
\le & C_{\lambda}\|g\|_{\infty} R^{\lambda}(|b_{n}-b|)(s,x)\\
\le & C_{\lambda}\|g\|_{\infty}N^{\alpha,+}_{2\epsilon_{1}}(|b-b_{n}|).
\end{align*}
 By (\ref{neq2.2.3}), we have
\[
\lim_{n \to \infty}R^{\lambda}B_{n}R^{\lambda}g(s,x)=R^{\lambda}BR^{\lambda}g(s,x), \quad \text{uniformly in $(s,x) \in \hs$}.\]
We next show that for each $k \in \BN$,

\begin{equation}\label{conforrbn}
    \lim_{n \to \infty}R^{\lambda}(B_{n}R^{\lambda})^{k}g(s,x)=R^{\lambda}(BR^{\lambda})^{k}g(s,x), \quad \text{uniformly in $(s,x) \in [0,\infty)\times \mathbb{R}^{d}$},
\end{equation}
and
\begin{equation}\label{conforrbn2}
    \lim_{n \to \infty}N^{\alpha,+}_{2\epsilon_{1}}[(B_{n}R^{\lambda})^{k}g-(BR^{\lambda})^{k}g]=0.\end{equation}
The case $k=1$ has already been treated. Suppose the assertions (\ref{conforrbn}) and (\ref{conforrbn2}) are true for $k$.
By Lemma \ref{lemma2.2.2.1},
\begin{align*}&|(B_{n}R^{\lambda})^{k+1}g-(BR^{\lambda})^{k+1}g|\\
=& |(B_{n}R^{\lambda})^{k+1}g-b_{n}\cdot\nabla R^{\lambda}(BR^{\lambda})^{k}g+b_{n}\cdot\nabla R^{\lambda}(BR^{\lambda})^{k}g-(BR^{\lambda})^{k+1}g|\\
\le & |b_{n}||\nabla R^{\lambda}(B_{n}R^{\lambda})^{k}g-\nabla R^{\lambda}(BR^{\lambda})^{k}g|+|b_{n}-b||\nabla R^{\lambda}(BR^{\lambda})^{k}g|\\
\le & C_{1} |b_{n}| N^{\alpha,+}_{2\epsilon_{1}}((B_{n}R^{\lambda})^{k}g-(BR^{\lambda})^{k}g)+C_{1} |b_{n}-b|N^{\alpha,+}_{2\epsilon_{1}}((BR^{\lambda})^{k}g).
\end{align*}
Similarly to (\ref{neq2.2.2.20}), we obtain
   \[ N^{\alpha,+}_{2\epsilon_{1}}((BR^{\lambda})^{k}g)\le C_{\lambda}\|g\|_{\infty}(\frac{1}{2})^{k}.\]
So, we get
\[
    \lim_{n \to \infty}N^{\alpha,+}_{2\epsilon_{1}}[(B_{n}R^{\lambda})^{k+1}g-(BR^{\lambda})^{k+1}g]=0.\]
As $n \to \infty$, by Lemma \ref{lemma2.2.2.3},
\begin{align*}& |R^{\lambda}(B_{n}R^{\lambda})^{k+1}g(s,x)-R^{\lambda}(BR^{\lambda})^{k+1}g(s,x)|\\
\le & N^{\alpha,+}_{2\epsilon_{1}}[(B_{n}R^{\lambda})^{k+1}g-(BR^{\lambda})^{k+1}g] \to 0,
\end{align*}and the convergence is uniform in $(s,x) \in [0,\infty)\times \mathbb{R}^{d}$.

Now,  the assertion follows from (\ref{neweqrn}), (\ref{neweqr}) and (\ref{conforrbn}). 
\end{proof}

\begin{lemma}\label{lemma2.2.3.50}As $\lambda \to \infty$, $S_{n}^{\lambda}|b_{n}|(s,x)$ converges to $0$ uniformly in $(s,x) \in \hs$.  Moreover, the convergence rate is independent of $n$.
\end{lemma}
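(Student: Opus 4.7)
The plan is to avoid the Neumann series of Lemma~\ref{newlemmasn}---whose term-wise bounds involve the $n$-dependent quantity $\|b_n\|_\infty$---and instead feed the resolvent identity $S_n^{\lambda}g = R^{\lambda}g + S_n^{\lambda}B_n R^{\lambda}g$ from the proof of that lemma back into itself with the self-referential choice $g = |b_n|$. Since $|b_n|$ itself lies in $\CF\CK_{d-1}^{\alpha}$ with $N_{2\epsilon_1}^{\alpha,+}(|b_n|) \le \kappa N_{2\epsilon_1}^{\alpha,+}(|b|)$ by Lemma~\ref{nlemma2.1.2}, this substitution will yield an \emph{absorption inequality} from which all $n$-dependence disappears.

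First, combining the resolvent identity with the positivity of $S_n^{\lambda}$ (so that $|S_n^{\lambda}f| \le S_n^{\lambda}|f|$) and Lemma~\ref{lemma2.2.2.1} applied to $|b_n|$---applicable because (\ref{supportofbn}) confines $|b_n|$ to a time-strip of length $2\epsilon_1 < 1$---I would obtain the pointwise estimate
\[
\bigl|B_n R^{\lambda}|b_n|\bigr|(s,x) \le |b_n|(s,x)\cdot C_1 N_{2\epsilon_1}^{\alpha,+}(|b_n|) \le C_1\kappa N_{2\epsilon_1}^{\alpha,+}(|b|)\cdot|b_n|(s,x) \le \tfrac{1}{2}|b_n|(s,x),
\]
where the last step invokes (\ref{neweqass2}). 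Since $S_n^{\lambda}|b_n| \le \|b_n\|_\infty/\lambda$ is finite, absorption yields $S_n^{\lambda}|b_n| \le 2 R^{\lambda}|b_n|$ uniformly in $n$ and $(s,x)$, reducing the claim to uniform-in-$n$ decay of $R^{\lambda}|b_n|$.

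Next, to prove $R^{\lambda}|b_n| \to 0$, I would split the defining time integral at $s+h$ for a parameter $h \in (0,1)$. The short-time piece $t-s \le h$ admits the Gaussian bound $p(s,x;t,y) \le (2\pi)^{-d/2}(t-s)^{-(d+1)/2}\exp(-\alpha|x-y|^2/(t-s))$ and contributes at most $\frac{\kappa}{(2\pi)^{d/2}}N_h^{\alpha,+}(|b|)$. The long-time piece factors $e^{-\lambda h}$ out of the exponential and leaves $R^0|b_n|$; the Chapman--Kolmogorov argument behind Lemma~\ref{lemma2.2.2.3} (splitting at time $s_1-\epsilon_1/2$ when $s < s_1-\epsilon_1/2$) applies equally at $\lambda=0$ and gives $R^0|b_n| \le \kappa N_{2\epsilon_1}^{\alpha,+}(|b|)$. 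Choosing $h(\lambda) := \lambda^{-1/2}$ drives both $N_{h(\lambda)}^{\alpha,+}(|b|)\to 0$ (by Assumption~\ref{assumption2.2.3}) and $e^{-\sqrt{\lambda}}\to 0$, yielding a bound on $R^{\lambda}|b_n|$---and hence, via the absorption step, on $S_n^{\lambda}|b_n|$---that is independent of $n$ and $(s,x)$.

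The main obstacle is precisely this $n$-uniformity. The Neumann-series bounds $|R^{\lambda}(B_n R^{\lambda})^k g|\le C_\lambda\|g\|_\infty(1/2)^k$ produced in the proof of Lemma~\ref{newlemmasn} carry an $\ell^\infty$-norm factor that degenerates once one takes $g = |b_n|$, since the sup-norm of $b_n$ is not controlled as $n\to\infty$. The absorption trick resolves this by exploiting the Kato-class structure of $|b_n|$ itself---the very reason we chose $g=|b_n|$ rather than iterating the series blindly.
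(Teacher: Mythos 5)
Your proposal is correct, and both your argument and the paper's hinge on exactly the same two steps: first the uniform-in-$n$ reduction $S_n^{\lambda}|b_n|\le 2R^{\lambda}|b_n|$, then a split of the time integral at a small $h$ using the Kato-class smallness of $N^{\alpha,+}_h(|b|)$ for the short-time piece and the factor $e^{-\lambda h}$ for the tail. Where you differ is only in how the first step is obtained: you feed the one-step resolvent identity $S_n^{\lambda}g=R^{\lambda}g+S_n^{\lambda}B_nR^{\lambda}g$ into itself with $g=|b_n|$, use Lemma~\ref{lemma2.2.2.1} to get $|B_nR^{\lambda}|b_n||\le\tfrac12|b_n|$, and absorb; the paper instead resums the full Neumann series, proving $N^{\alpha,+}_{2\epsilon_1}\bigl((B_nR^{\lambda})^k|b_m|\bigr)\le(\tfrac12)^kN^{\alpha,+}_{2\epsilon_1}(|b_m|)$ term by term and then summing, which yields $S_n^{\lambda}|b_m|\le R^{\lambda}|b_m|+R^{\lambda}|b_n|$ and, with $m=n$, the same bound. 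Your absorption derivation is shorter and arguably cleaner, and the fact that $S_n^{\lambda}|b_n|<\infty$ (needed to subtract) is correctly noted. One small caveat about your framing: the premise that the Neumann-series route ``carries an $\ell^{\infty}$-norm factor that degenerates once one takes $g=|b_n|$'' mischaracterizes the paper's proof, which does \emph{not} invoke the $\|g\|_{\infty}$-weighted bound of Lemma~\ref{newlemmasn} here but re-derives Kato-class bounds for $g=|b_m|$ directly, exploiting precisely the same structure you exploit; so the substance of the two derivations is near-identical, and the difference is cosmetic rather than a repair of a gap.
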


\begin{proof}
Let $m,n \in \BN$. Since $|b_{m}|$ is bounded, by Lemma \ref{newlemmasn},
\[
S_{n}^{\lambda}|b_{m}|=\sum_{k=0}^{\infty}R^{\lambda}(B_{n}R^{\lambda})^{k}|b_{m}|.
\]
Similarly to (\ref{neq2.2.2.20}), it follows from Lemma \ref{lemma2.2.2.1}, (\ref{eqclaimk1}) and (\ref{supportofBn}) that
\begin{align*}
N^{\alpha,+}_{2\epsilon_{1}}((B_{n}R^{\lambda})^{k}|b_{m}|) = & N^{\alpha,+}_{2\epsilon_{1}}\big[b_{n} \cdot \nabla \big(R^{\lambda}(B_{n}R^{\lambda})^{k-1}|b_{m}|\big)\big] \\
      \le & N^{\alpha,+}_{2\epsilon_{1}}[C_1|b_{n}|N^{\alpha,+}_{2\epsilon_{1}}((B_{n}R^{\lambda})^{k-1}|b_{m}|)]  \\
      \le & C_1 N^{\alpha,+}_{2\epsilon_{1}}(|b_{n}|) N^{\alpha,+}_{2\epsilon_{1}}((B_{n}R^{\lambda})^{k-1}|b_{m}|)  \\
       \le & \frac{1}{2} N^{\alpha,+}_{2\epsilon_{1}}((B_{n}R^{\lambda})^{k-1}|b_{m}|)  \le  \cdots   \\
      \le & (\frac{1}{2})^{k}N^{\alpha,+}_{2\epsilon_{1}}(|b_{m}|).
\end{align*}
By Lemma \ref{lemma2.2.2.1} and (\ref{eqclaimk1}), we obtain
\begin{align}
    S_{n}^{\lambda}|b_{m}|\le & R^{\lambda} (|b_{m}|) + \sum_{k=1}^{\infty}R^{\lambda} \big(\big|b_{n}\big| \big|\nabla \big(R^{\lambda}(B_{n}R^{\lambda})^{k-1}|b_{m}|\big)\big|\big) \notag \\
\le & R^{\lambda} (|b_{m}|)+\sum_{k=1}^{\infty}C_{1}    N^{\alpha,+}_{2\epsilon_{1}}[(B_{n}R^{\lambda})^{k-1}|b_{m}|] R^{\lambda} (|b_{n}|)\notag \\
\label{estiforsnm}\le & R^{\lambda} (|b_{m}|)+\sum_{k=1}^{\infty}  C_{1}  (\frac{1}{2})^{k-1} N^{\alpha,+}_{2\epsilon_{1}}(|b_{m}|)  R^{\lambda} (|b_{n}|)\\
\label{estiforsnm2}\le & R^{\lambda} (|b_{m}|)+\sum_{k=1}^{\infty}    (\frac{1}{2})^{k} R^{\lambda} (|b_{n}|)  \le   R^{\lambda}|b_{m}|+R^{\lambda}|b_{n}|.
    \end{align}

For any given $ \epsilon >0$, we can find a $\delta \in (0,1)$ such that
\[
     N^{\alpha,+}_{\delta}(|b|)\le \frac{\epsilon}{4\kappa }.\]
By (\ref{neq2.2.3.1}), (\ref{estiforsnm2}) and noting that $\textrm{supp}(b_{n}) \subset [s_{1}-\epsilon/2,s_{1}+3\epsilon/2]\times K$ and $\epsilon _{1}<1/2$, we have
 \begin{align*}  & S_{n}^{\lambda}|b_{n}|(s,x) \le    2  R^{\lambda}|b_{n}|(s,x) \\
 \le & 2\int_{s}^{\infty}\int_{\mathbb{R}^{d}}e^{-\lambda(t-s)}p(s,x;t,y)|b_{n}|(t,y)dydt  \\
 \le & 2\Big(\int_{s}^{s+\delta}\int_{\mathbb{R}^{d}}\frac{1}{(t-s)^{\frac{d+1}{2}}}\exp(-\alpha  \frac{|x-y|^{2}}{t-s})|b_{n}|(t,y)dydt \\
 & \qquad \qquad \qquad \qquad \qquad +\int_{s+\delta}^{\infty}\int_{\mathbb{R}^{d}}e^{-\lambda \delta}p(s,x;t,y)|b_{n}|(t,y)dydt \Big )\\
 \le & 2   N^{\alpha,+}_{\delta}(|b_{n}|) + 2 e^{-\lambda \delta} \int_{s}^{\infty}\int_{\mathbb{R}^{d}}p(s,x;t,y)|b_{n}|(t,y)dydt.
\end{align*}
By the same arguments that we used to establish (\ref{graesti1}) and (\ref{eq2.2.3}), we obtain
\[
\int_{s}^{\infty}\int_{\mathbb{R}^{d}}p(s,x;t,y)|b_{n}|(t,y)dydt \le N^{\alpha,+}_{2\epsilon_{1}}(|b_{n}|).
\]
Therefore,
\begin{align*}
 S_{n}^{\lambda}|b_{n}|(s,x)
 \le & 2   N^{\alpha,+}_{\delta}(|b_{n}|)+ 2e^{-\lambda \delta}N^{\alpha,+}_{2\epsilon_{1}}(|b_{n}|)\\
 \le & 2\kappa  N^{\alpha,+}_{\delta}(|b|)+ 2\kappa e^{-\lambda \delta}N^{\alpha,+}_{2\epsilon_{1}}(|b|) \\
 \le & \frac{\epsilon}{2}+\frac{1}{C_1}e^{-\lambda \delta}.
 \end{align*}
So we can find a sufficiently large $\lambda_{0}$ such that if $\lambda > \lambda_{0}$, then $S_{n}^{\lambda}|b_{n}|<\epsilon$. Since $\lambda_{0}$ can be chosen independently of $n$, the lemma is proved. 
\end{proof}

With the help of Lemma \ref{lemma2.2.3.50}, we can use the same method as in \cite[Theorem 4.3]{MR1964949} to prove the following lemma.

\begin{lemma}\label{lemmatightness}Let $\beta, \epsilon>0$. Then there exists $\delta>0$ not depending on $(s,x)$ and  $n$,  such that
\begin{equation}
    \mathbf{P}^{s,x}_{n}\Big(\sup_{s \le t \le s+\delta}|X_{t}-x|>\beta\Big)<\epsilon.
\end{equation}
\end{lemma}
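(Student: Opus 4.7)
My plan is to decompose $X-x$ under $\mathbf{P}^{s,x}_n$ into a Brownian martingale piece and a drift integral, and control each separately with bounds that do not depend on $(s,x)$ or on $n$. Concretely, since $\mathbf{P}^{s,x}_n$ solves the martingale problem for $\tfrac12\Delta + b_n\cdot\nabla$, the process
\[
M_t := X_t - x - \int_s^t b_n(u, X_u)\,du, \qquad t \ge s,
\]
is a standard $d$-dimensional $\mathbf{P}^{s,x}_n$-Brownian motion with $M_s = 0$. By the triangle inequality and a union bound, it will suffice to choose $\delta$ independent of $(s,x)$ and $n$ so that
\[
\mathbf{P}^{s,x}_n\Bigl(\sup_{s \le t \le s+\delta}|M_t| > \beta/2\Bigr) < \frac{\epsilon}{2}
\quad\text{and}\quad
\mathbf{P}^{s,x}_n\Bigl(\int_s^{s+\delta}|b_n(u,X_u)|\,du > \beta/2\Bigr) < \frac{\epsilon}{2}.
\]

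The Brownian increment is handled by standard tools: the law of $\sup_{s \le t \le s+\delta}|M_t|$ under $\mathbf{P}^{s,x}_n$ coincides with that of $\sup_{0 \le u \le \delta}|W_u|$ for a standard $d$-dimensional Brownian motion $W$, and Doob's $L^2$-inequality yields $\mathbf{P}(\sup_{u\le\delta}|W_u|>\beta/2) \le C d \delta /\beta^2$. This is independent of $(s,x)$ and $n$, and is less than $\epsilon/2$ once $\delta$ is small.

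For the drift integral I combine Markov's inequality with Lemma \ref{lemma2.2.3.50}. For any $\lambda > 0$,
\[
\mathbf{E}^{s,x}_n\Bigl[\int_s^{s+\delta}|b_n(u,X_u)|\,du\Bigr] \le e^{\lambda\delta}\,\mathbf{E}^{s,x}_n\Bigl[\int_s^\infty e^{-\lambda(u-s)}|b_n(u,X_u)|\,du\Bigr] = e^{\lambda\delta}\,S_n^\lambda|b_n|(s,x).
\]
Lemma \ref{lemma2.2.3.50} gives $S_n^\lambda|b_n|(s,x) \to 0$ as $\lambda\to\infty$, uniformly in $(s,x)\in\hs$ and in $n$. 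The correct order of quantifiers is therefore: \emph{first} fix $\lambda$ large enough that $S_n^\lambda|b_n|(s,x) < \beta\epsilon/8$ for every $(s,x)$ and every $n$; \emph{then} choose $\delta$ so small that $e^{\lambda\delta}\le 2$ and the Brownian estimate above delivers probability $<\epsilon/2$. Markov's inequality on the drift event then yields $(2/\beta)\cdot 2\cdot \beta\epsilon/8 = \epsilon/2$, finishing the argument.

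The main obstacle is precisely this quantifier ordering: the bound must be simultaneously uniform in the initial data $(s,x)$ and in the mollification index $n$. It is the uniformity in $n$ furnished by Lemma \ref{lemma2.2.3.50} that allows a single $\lambda$ to work for every approximation $b_n$, which in turn is what lets the final $\delta$ be chosen independently of $n$; without that uniformity the whole scheme of passing $n\to\infty$ later would collapse here.
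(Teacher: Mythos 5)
Your proof is correct and follows essentially the same strategy as the paper: reduce to bounding the drift integral, then apply Markov's inequality together with the uniform-in-$(s,x)$-and-$n$ decay of $S_n^\lambda|b_n|$ from Lemma \ref{lemma2.2.3.50}. The only cosmetic differences are that the paper defers the Brownian-plus-drift decomposition to the cited Bass--Chen proof rather than spelling it out, and it couples $\theta=1/\delta$ so that $e^{\theta\delta}=e$ exactly, whereas you fix $\lambda$ first and then shrink $\delta$.
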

\begin{proof}
As shown in the proof of \cite[Theorem 4.3]{MR1964949}, it suffices to find a $\delta>0$, which is independent of $(s,x)$ and  $n$, such that
\begin{equation}\label{eqsufficienttightness}
    \mathbf{P}^{s,x}_{n}\Big(\int_{s}^{s+\delta}|b_{n}(u,X_{u})|du > \frac{\beta}{2}\Big) < \frac{\epsilon}{2}.
\end{equation}
Let $\theta = \delta^{-1}$. Then
\begin{align*}\mathbf{P}^{s,x}_{n}\Big(\int_{s}^{s+\delta}|b_{n}(u,X_{u})|du  >  \frac{\beta}{2}\Big)
\le &  \frac{2}{\beta}\mathbf{E}^{s,x}_{n}\Big[\int_{s}^{s+\delta}|b_{n}(u,X_{u})|du\Big] \\
\le & \frac{2e}{\beta}\mathbf{E}^{s,x}_{n}\Big[\int_{s}^{s+\delta}e^{-\theta (u-s)}|b_{n}(u,X_{u})|du\Big] \\
\le & \frac{2e}{\beta}\mathbf{E}^{s,x}_{n}\Big[\int_{s}^{\infty}e^{-\theta (u-s)}|b_{n}(u,X_{u})|du \Big]\\
\le & \frac{2e}{\beta}S_{n}^{\theta}|b_{n}|(s,x).
\end{align*}
By Lemma \ref{lemma2.2.3.50}, we can find a large enough $\theta>0$, independent of $(s,x)$ and $n$, such that
\[
S_{n}^{\theta}|b_{n}|(s,x) < \frac{\beta \epsilon}{4e}.\]
Thus the assertion (\ref{eqsufficienttightness}) follows. 
\end{proof}

\begin{lemma}\label{lemma2.2.3.0}Let $\beta \in (0,1]$. Define $\sigma_{s}:= \inf \{r \ge 0:|X_{s+r}-X_{s}|\ge \beta\}.$ Then there exists $0<\delta<1$, which does not depend on $(s,x)$, such that
\[
    \sup_{n \in \BN}\mathbf{E}^{s,x}_{n}[e^{-\sigma_{s}}]\le \delta.
\]
\end{lemma}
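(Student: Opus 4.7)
The plan is to reduce the claim directly to the tightness estimate in Lemma~\ref{lemmatightness}, then split the expectation $\mathbf{E}^{s,x}_{n}[e^{-\sigma_s}]$ on the event $\{\sigma_s \le t_0\}$ versus $\{\sigma_s > t_0\}$ for a fixed threshold $t_0>0$ chosen uniformly in $n$ and $(s,x)$.

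First I would apply Lemma~\ref{lemmatightness} with the given $\beta \in (0,1]$ and $\epsilon := 1/2$. That lemma supplies some $t_0 = t_0(\beta) > 0$, independent of both $n$ and the starting point $(s,x)$, such that
\[
\mathbf{P}^{s,x}_{n}\Big(\sup_{s \le u \le s+t_0} |X_u - x| > \beta\Big) < \tfrac{1}{2}.
\]
Since $X_s = x$ under $\mathbf{P}^{s,x}_n$ by \eqref{wsintegrcondi}, the event $\{\sigma_s \le t_0\}$ is contained in $\{\sup_{s \le u \le s+t_0} |X_u - x| \ge \beta\}$, so that $\mathbf{P}^{s,x}_n(\sigma_s \le t_0) \le 1/2$.

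Next I would bound the expectation using $e^{-\sigma_s} \le 1$ on $\{\sigma_s \le t_0\}$ and $e^{-\sigma_s} \le e^{-t_0}$ on $\{\sigma_s > t_0\}$:
\[
\mathbf{E}^{s,x}_{n}[e^{-\sigma_s}] \le \mathbf{P}^{s,x}_n(\sigma_s \le t_0) + e^{-t_0}\,\mathbf{P}^{s,x}_n(\sigma_s > t_0) \le \tfrac{1}{2} + \tfrac{1}{2}e^{-t_0} = \tfrac{1+e^{-t_0}}{2}.
\]
Setting $\delta := (1+e^{-t_0})/2$ yields $\delta \in (0,1)$, independent of both $n$ and $(s,x)$, and completes the proof.

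There is no real obstacle here; the statement is essentially an immediate corollary of Lemma~\ref{lemmatightness}. The only point requiring any care is checking that the constant $t_0$ produced by that lemma is indeed uniform in $n$ and $(s,x)$ — which is exactly what is guaranteed there — so that the resulting $\delta$ inherits the required uniformity.
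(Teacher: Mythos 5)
Your proof is correct and is exactly the argument the paper intends: the paper simply defers to the proof of \cite[Corollary 4.4]{MR1964949}, which runs precisely as you do --- use the uniform tightness estimate (here Lemma \ref{lemmatightness}) to find a $t_0$ independent of $n$ and $(s,x)$ with $\mathbf{P}^{s,x}_n(\sigma_s\le t_0)\le 1/2$, then split $\mathbf{E}^{s,x}_n[e^{-\sigma_s}]$ on $\{\sigma_s\le t_0\}$ and its complement to get $\delta=(1+e^{-t_0})/2<1$. The only cosmetic point is that $\{\sigma_s\le t_0\}$ lands you in $\{\sup_{s\le u\le s+t_0}|X_u-x|\ge\beta\}$ while Lemma \ref{lemmatightness} bounds the probability of strict exceedance, so one should invoke that lemma with $\beta/2$ (or any smaller radius) in place of $\beta$; this changes nothing else.
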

\begin{proof}The proof goes in the same way as the proof of \cite[Corollary 4.4]{MR1964949}. 
\end{proof}

Next, we show how we can use the above two lemmas and the strong Markov property to deduce the tightness of $\{ \mathbf{P}^{s,x}_{n}:  n \in \BN \}$.
\begin{proposition}\label{newproptight}Let $\beta, \epsilon, T>0$. Then there exists $\delta>0$, which does not depend on $(s,x)$ and  $n$, such that
\begin{equation}\label{eqfortightness}
    \mathbf{P}^{s,x}_{n}\Big(\sup_{t, t'\le T, |t-t'|\le \delta}|X_{t}-X_{t'}|>\beta\Big)<\epsilon.
\end{equation}
\end{proposition}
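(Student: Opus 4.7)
The approach is to convert the local displacement bound of Lemma~\ref{lemmatightness} into a global modulus-of-continuity estimate via a stopping-time chaining, using Lemma~\ref{lemma2.2.3.0} to bound the number of ``$\beta$-sized excursions'' that $X$ can make on $[s,T]$ and Lemma~\ref{lemmatightness} to rule out that two consecutive such excursions happen within time~$\delta$ of each other. On the complementary event, any two times within $\delta$ of each other must lie in the same or adjacent excursion intervals, and the oscillation bound then follows by triangle inequality.

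Since $X_t\equiv x$ for $t\le s$ under $\mathbf{P}^{s,x}_n$, the statement is trivial for $T\le s$, so assume $s\le T$, and also (WLOG) $\beta\le 1$. Set $c:=\beta/4$ and $\tau_0:=s$, $\tau_{k+1}:=\inf\{t>\tau_k:|X_t-X_{\tau_k}|\ge c\}$. Lemma~\ref{lemma2.2.3.0} applied with $c$ gives $\delta_*\in(0,1)$, independent of $(u,y)$ and $n$, with $\mathbf{E}^{u,y}_n[e^{-\sigma}]\le\delta_*$ for $\sigma$ the first exit time from the $c$-ball at the starting point. Iterating the strong Markov property at each $\tau_k$ yields $\mathbf{E}^{s,x}_n[e^{-(\tau_K-s)}]\le\delta_*^K$, whence $\mathbf{P}^{s,x}_n(\tau_K\le T)\le e^T\delta_*^K$ by Chebyshev. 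I choose $K=K(\epsilon,T,\beta)\in\BN$ so that $e^T\delta_*^K<\epsilon/2$.

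With $K$ now fixed, apply Lemma~\ref{lemmatightness} with parameters $c/2$ and $\epsilon/(2K)$ in place of $\beta,\epsilon$ to obtain $\delta>0$, independent of $(u,y)$ and $n$, with
\[
\mathbf{P}^{u,y}_n\Big(\sup_{u\le t\le u+\delta}|X_t-y|>c/2\Big)\le\frac{\epsilon}{2K}.
\]
By path-continuity, $\{\tau_{k+1}-\tau_k\le\delta\}\subset\{\sup_{\tau_k\le t\le\tau_k+\delta}|X_t-X_{\tau_k}|\ge c\}\subset\{\sup>c/2\}$, and the strong Markov property at each $\tau_k$ gives
\[
\mathbf{P}^{s,x}_n(B)\le\sum_{k=0}^{K-1}\mathbf{P}^{s,x}_n(\tau_k<T,\,\tau_{k+1}-\tau_k\le\delta)\le K\cdot\frac{\epsilon}{2K}=\frac{\epsilon}{2},
\]
where $B:=\{\exists\,k\in\{0,\dots,K-1\}:\tau_k<T,\ \tau_{k+1}-\tau_k\le\delta\}$. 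Thus the ``good'' event $G:=B^c\cap\{\tau_K>T\}$ satisfies $\mathbf{P}^{s,x}_n(G)>1-\epsilon$.

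On $G$, for any $t,t'\in[s,T]$ with $|t-t'|\le\delta$, write $t\in[\tau_j,\tau_{j+1}]$ and $t'\in[\tau_{j'},\tau_{j'+1}]$ with $0\le j\le j'\le K-1$. On $B^c$, every interval $[\tau_k,\tau_{k+1}]$ with $\tau_k<T$ has length exceeding $\delta$, so if $j'\ge j+2$ then $t'-t\ge\tau_{j'}-\tau_{j+1}\ge(j'-j-1)\delta>\delta$, contradicting $|t-t'|\le\delta$; hence $j'\in\{j,j+1\}$. Then by the triangle inequality and the defining property of $\tau_k$,
\[
|X_t-X_{t'}|\le|X_t-X_{\tau_j}|+|X_{\tau_j}-X_{\tau_{j+1}}|+|X_{\tau_{j+1}}-X_{t'}|\le 3c=\tfrac{3}{4}\beta<\beta,
\]
which proves \eqref{eqfortightness}. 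The only real (and minor) subtlety is ordering: one must fix $K$ first from Lemma~\ref{lemma2.2.3.0} and only then pick $\delta$ from Lemma~\ref{lemmatightness}, so that no circularity arises; the uniformity of the resulting $\delta$ in $(s,x)$ and $n$ is inherited directly from the uniformity already present in both lemmas.
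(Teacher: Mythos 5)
Your proof is correct and follows essentially the same route as the paper: a chain of stopping times at which the path moves a fixed fraction of $\beta$, Lemma \ref{lemma2.2.3.0} plus the strong Markov property to bound the number of such excursions before time $T$, Lemma \ref{lemmatightness} to exclude consecutive excursions within time $\delta$, and a triangle inequality on the complementary event. The only differences are cosmetic (radius $\beta/4$ instead of $\beta/3$, and a union bound over the $K$ gaps in place of the paper's iterated conditioning estimate $1-(1-\epsilon')^{i_0}$).
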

\begin{proof}
Since $\mathbf{P}^{s,x}_{n}(X_t=x \mbox{ for all } t \le s)=1$, we can assume without loss of generality that $s<T$; otherwise
\[
\mathbf{P}^{s,x}_{n}\Big(\sup_{t, t'\le T, |t-t'|\le \delta}|X_{t}-X_{t'}|>\beta\Big)=0.
\]
Furthermore, it is easy to see that the inequality (\ref{eqfortightness}) is equivalent to
\[
\mathbf{P}^{s,x}_{n}\Big(\sup_{\substack{s\le t,  t'\le T \\ |t-t'|\le \delta}}|X_{t}-X_{t'}|>\beta\Big)<\epsilon.
\]

Let $\tau_{0}=s$ and define inductively
\[
\tau_{i+1}:=\inf \big\{t\ge \tau_{i}:|X_{t}-X_{\tau_i}|\ge \frac{\beta}{3} \big\}.
\]
By the strong Markov property,
\[
\mathbf{E}^{s,x}_{n}\big[e^{-\tau_{i+1}} \big ]=\mathbf{E}^{s,x}_{n} \big[e^{-(\tau_{i+1}-\tau_{i})}e^{-\tau_{i}} \big]=\mathbf{E}^{s,x}_{n} \big[e^{-\tau_{i}}\mathbf{E}_n^{\tau_{i},X_{\tau_{i}}}[e^{-(\tau_{i+1}-\tau_{i})}] \big].\]
From Lemma \ref{lemma2.2.3.0} we know that $\mathbf{E}_n^{\tau_{i},X_{\tau_{i}}}[e^{-(\tau_{i+1}-\tau_{i})}]<\gamma<1$, where $\gamma$ is a constant and is independent of $(\tau_{i},X_{\tau_{i}})$ and $n$. Therefore, by induction,
\[
\mathbf{E}^{s,x}_{n}\big[e^{-\tau_{i+1}} \big ] \le \gamma^{i+1}.\]
Thus
\begin{align}
\mathbf{P}^{s,x}_{n}(\tau_i \le T)=&\mathbf{P}^{s,x}_{n}(e^{-\tau_i} \ge e^{-T}) \notag \\
\label{estifortau}\le & e^T \mathbf{E}^{s,x}_{n}\big[e^{-\tau_{i}} \big ] \le  e^T \gamma^{i},
\end{align}
which implies the existence of $i_0 \in \BN$ with
\[
\mathbf{P}^{s,x}_{n}(\tau_{i_0} \le T)< \frac{\epsilon}{2}.
\]
It should be noted that $i_0$ is independent of $n$.

It remains to show that we can find a $\delta>0$, which does not depend on $(s,x)$ and  $n$, such that
\begin{equation}\label{neweqtightnesssm4}
    \mathbf{P}^{s,x}_{n}\Big( \tau_{i_0} > T \ \mbox{ and }\ \sup_{\substack{s\le t,  t'\le \tau_{i_0} \\ |t-t'|\le \delta}}|X_{t}-X_{t'}|>\beta\Big)\le \frac{\epsilon}{2}.
\end{equation}
Suppose now $\tau_{i_0}(\omega) > T$, ~$|X_{t}(\omega)-X_{t'}(\omega)|>\beta, ~ s\leq t < t'\leq \tau_{i_0}(\omega)$ and $|t-t'|\leq\delta$. Then there exists an $l\in\left\{0,1,\cdots,i_0-1\right\}$ such that
\[
 \tau_l(\omega) \leq t < \tau_{l+1}(\omega).\]
If $ \tau_l(\omega) \leq t < t'\leq \tau_{l+1}(\omega)$, then
\[
|X_{t}(\displaystyle \omega)-X_{\tau_l}(\omega)|\le \frac{\beta}{3} \quad \mbox{and} \quad  |X_{t'}(\displaystyle \omega)-X_{\tau_l}(\omega)|\le \frac{\beta}{3},
\]
so
$$
|X_{t}(\omega)-X_{t'}(\omega)|\leq\frac{2\beta}{3},
$$
which is a contradiction to the fact that $|X_{t}(\omega)-X_{t'}(\omega)|>\beta$.
Thus it must hold $ \tau_l(\omega) \leq t < \tau_{l+1}(\omega)< t' \le \tau_{i_0}$. So
\[
|X_{t}(\displaystyle \omega)-X_{\tau_l}(\omega)|\le \frac{\beta}{3}, \qquad |X_{\tau_{l+1}}(\displaystyle \omega)-X_{\tau_l}(\omega)|\le \frac{\beta}{3},
\]
and hence
\begin{align*}
|X_{t'}(\displaystyle \omega)-X_{\tau_{l+1}}(\omega)|
\ge & |X_{t'}(\displaystyle \omega)-X_{t}(\displaystyle \omega)|- |X_{t}(\displaystyle \omega)-X_{\tau_l}(\omega)|& \\
& \qquad  \qquad \qquad \qquad \qquad -|X_{\tau_{l+1}}(\displaystyle \omega)-X_{\tau_l}(\omega)| & \\
>& \frac{\beta}{3},&
\end{align*}
which implies $0\le \tau_{l+2}(\omega)-\tau_{l+1}(\omega) < t'-t \le \delta$. Therefore,
\begin{align}
& \Big \{ \tau_{i_0} > T \ \mbox{ and }\ \sup_{\substack{s\le t,  t'\le \tau_{i_0} \\ |t-t'|\le \delta}}|X_{t}-X_{t'}|>\beta \Big \} \notag \\
\label{neweqtightnesssm5} \subset & \big\{\tau_{i_0} > T \mbox{ and } \min_{1\le l \le i_0} (\tau_l-\tau_{l-1}) < \delta \big\}
 \subset  \big\{\min_{1\le l \le i_0} (\tau_l-\tau_{l-1} )< \delta \big\}.
\end{align}
By Lemma \ref{lemmatightness}, we can find a sufficiently small $\delta>0$ such that
\begin{equation}\label{neweqtightness}
 \mathbf{P}^{s,x}_{n}\Big(\sup_{s \le t \le s+\delta}|X_{t}-x|>\frac{\beta}{4}\Big)<\epsilon'
\end{equation}
for all $(s,x) \in \hs$ and $n \in \BN$, where $\epsilon':=1-(1-\epsilon/2)^{1/i_0}$.
By the strong Markov property,
\begin{align}
& \mathbf{P}^{s,x}_{n}(\min_{1\le l \le i_0} (\tau_l-\tau_{l-1} ) < \delta) \\
= & 1-\mathbf{P}^{s,x}_{n}(\min_{1\le l \le i_0} (\tau_l-\tau_{l-1} ) \ge \delta) \notag \\
= & 1-\mathbf{E}^{s,x}_{n}\big[\prod_{l=1}^{i_0}\mathbf{1}_{\{\tau_l-\tau_{l-1} \ge\delta\}} \big ] \notag\\
=& 1-\mathbf{E}^{s,x}_{n} \big[\prod_{l=1}^{i_0-1}\mathbf{1}_{\{\tau_l-\tau_{l-1} \ge\delta\}}\mathbf{E}_n^{\tau_{i_0-1},X_{\tau_{i_0-1}}}[\mathbf{1}_{\{\tau_{i_0}-\tau_{i_0-1}\ge\delta\}}] \big] \notag \\
\label{neweqtightnesssm}= & 1-\mathbf{E}^{s,x}_{n} \big[\prod_{l=1}^{i_0-1}\mathbf{1}_{\{\tau_l-\tau_{l-1}\ge\delta\}}\mathbf{P}_n^{\tau_{i_0-1},X_{\tau_{i_0-1}}}(\tau_{i_0}-\tau_{i_0-1}\ge\delta) \big].
\end{align}
It follows from (\ref{neweqtightness}) that
\begin{align}
& \mathbf{P}_n^{\tau_{i_0-1},X_{\tau_{i_0-1}}}(\tau_{i_0}-\tau_{i_0-1}\ge\delta) \notag \\ = &1-\mathbf{P}_n^{\tau_{i_0-1},X_{\tau_{i_0-1}}}(\tau_{i_0}-\tau_{i_0-1}<\delta) \notag\\
\ge & 1-\mathbf{P}_n^{\tau_{i_0-1},X_{\tau_{i_0-1}}}\Big(\sup_{\tau_{i_0-1} \le t \le \tau_{i_0-1}+\delta}|X_{t}-X_{\tau_{i_0-1}}|>\frac{\beta}{4}\Big) \notag\\
\label{neweqtightnesssm2}\ge & 1-\epsilon'.
\end{align}
Using (\ref{neweqtightnesssm}), (\ref{neweqtightnesssm2}) and induction, we obtain
\begin{align}
\mathbf{P}^{s,x}_{n}\Big(\min_{1\le l \le i_0} (\tau_l-\tau_{l-1} ) < \delta\Big) \le & 1-(1-\epsilon')\mathbf{E}^{s,x}_{n} \Big[\prod_{l=1}^{i_0-1}\mathbf{1}_{\{\tau_l-\tau_{l-1} \ge\delta\}} \Big]\notag \\
\label{neweqtightnesssm3}\le & 1-(1-\epsilon')^{i_0} = \frac{\epsilon}{2}.
\end{align}
Combining (\ref{neweqtightnesssm5}) and (\ref{neweqtightnesssm3}) yields (\ref{neweqtightnesssm4}).

\end{proof}

From now on and till the end of this section we fix $(s,x) \in \hs$. By Proposition \ref{newproptight} and \cite[Theorem 1.3.2]{MR2190038}, we know that the family $\{ \mathbf{P}^{s,x}_{n}: n\in \BN\}$ of probability measures on $\big(\Omega=C([0,\infty);\mathbb{R}^{d}),\mathcal{M}\big)$ is tight, so we can find a subsequence $( \mathbf{P}^{s,x}_{n_k})_{k \ge 1}$ which converges weakly. Suppose that $\mathbf{P}^{s,x}:=\lim_{k\to \infty}\mathbf{P}^{s,x}_{n_{k}}$ is the limit point. Then we have the following proposition, which establishes the connection between the probability measure $\mathbf{P}^{s,x}$ and the resolvent operator $S^{\lambda}$.

\begin{proposition}\label{thm2.3.200}For every bounded and measurable function $f$ on $\hs$, we have
\[
\mathbf{E}^{s,x}\Big[\int_{s}^{\infty}e^{-\lambda(t-s)}f(t,X_{t})dt\Big]=S^{\lambda}f(s,x),\qquad (s,x)\in \hs, \]
where $\mathbf{E}^{s,x}[\cdot]$ denotes the expectation with respect to the measure $\mathbf{P}^{s,x}$ and $S^{\lambda}$ is defined by (\ref{neweqdefiofslambda}). 
\end{proposition}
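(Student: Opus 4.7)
The strategy is to exploit the resolvent identity for the smooth approximations, which holds by definition of $S^\lambda_n$, namely
$$S^\lambda_n f(s,x)=\mathbf{E}^{s,x}_n\Bigl[\int_s^\infty e^{-\lambda(t-s)}f(t,X_t)\,dt\Bigr],\qquad f\in\CB_b(\hs),$$
and to pass to the limit along the subsequence $n_k$ on both sides. The right-hand side is handled by Lemma \ref{newlemmasc}, so the whole burden is to transport the path-space expectation through weak convergence.

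First I would treat $f\in C_b(\hs)$. The path functional
$$F(\omega):=\int_s^\infty e^{-\lambda(t-s)}f(t,\omega_t)\,dt$$
is bounded by $\|f\|_\infty/\lambda$ and sequentially continuous on $\Omega=C([0,\infty);\Rd)$ equipped with the topology of local uniform convergence: if $\omega^{(k)}\to\omega$ locally uniformly, then $f(t,\omega^{(k)}_t)\to f(t,\omega_t)$ pointwise in $t$ by continuity of $f$, the integrand is dominated by $\|f\|_\infty e^{-\lambda(t-s)}$, and dominated convergence gives $F(\omega^{(k)})\to F(\omega)$. Since $\mathbf{P}^{s,x}_{n_k}\Rightarrow \mathbf{P}^{s,x}$, one obtains $\mathbf{E}^{s,x}_{n_k}[F]\to \mathbf{E}^{s,x}[F]$; combined with $S^\lambda_{n_k}f(s,x)\to S^\lambda f(s,x)$ this yields the identity for all continuous bounded $f$.

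To upgrade to bounded measurable $f$ I would apply a functional monotone class argument. The class $\CH$ of bounded measurable $f$ for which the identity holds at $(s,x)$ is a vector space containing $C_b(\hs)$; it remains to check closure under uniformly bounded pointwise limits. The left-hand side is the integral of $f$ against the finite occupation measure $\mu(B):=\mathbf{E}^{s,x}\bigl[\int_s^\infty e^{-\lambda(t-s)}\mathbf{1}_B(t,X_t)\,dt\bigr]$ and is therefore stable under such limits by dominated convergence. For the right-hand side I would use the series (\ref{neweqdefiofslambda}) together with the geometric majorant (\ref{neweqr}), $|R^\lambda(BR^\lambda)^k g|\le C_\lambda\|g\|_\infty 2^{-k}$: the Brownian resolvent $R^\lambda$ and its spatial gradient $\nabla R^\lambda$ (whose kernel is controlled by (\ref{eq2.2.2}) and is integrable against the $\CF\CK^\alpha_{d-1}$ drift $b$ by the computation in Lemma \ref{lemma2.2.2.1}) both send bounded pointwise-convergent uniformly bounded sequences to pointwise-convergent ones by dominated convergence; iterating this, each term $R^\lambda(BR^\lambda)^k f$ is continuous under such limits of $f$, and the geometric bound lets one exchange $\sum_k$ with the limit. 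Hence $\CH$ satisfies the hypotheses of the functional monotone class theorem and coincides with all bounded measurable functions on $\hs$.

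The step I expect to be the main obstacle is the continuity of the Neumann series in Step 3, because $b$ is only assumed to lie in the forward-Kato class and a priori one has no uniform pointwise control on $BR^\lambda$. The essential input that makes the dominated-convergence argument go through at the gradient stage is precisely the estimate (\ref{eq2.2.2}) together with the integrability produced by the $\CF\CK^\alpha_{d-1}$ hypothesis on $b$, so this is also the place where the standing assumption on the drift is genuinely used.
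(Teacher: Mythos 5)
Your proposal is correct and follows essentially the same route as the paper: first verify the identity for $f\in C_b(\hs)$ by combining the weak convergence $\mathbf{P}^{s,x}_{n_k}\Rightarrow\mathbf{P}^{s,x}$ (applied to the bounded continuous path functional $\int_s^\infty e^{-\lambda(t-s)}f(t,\omega_t)\,dt$) with Lemma \ref{newlemmasc}, then extend to all bounded measurable $f$ by a monotone class argument. The only difference is cosmetic: the paper passes through indicators of open sets via monotone approximation before invoking the monotone class theorem, whereas you verify closure of $S^{\lambda}$ under uniformly bounded pointwise limits term by term in the series (\ref{neweqdefiofslambda}), which in fact makes explicit a dominated-convergence step the paper leaves implicit.
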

\begin{proof}In view of Lemma \ref{newlemmasc} and the weak convergence of $ \mathbf{P}^{s,x}_{n_k}$ to $\mathbf{P}^{s,x}$ as $k \to \infty$, the assertion is clearly true when $f$ is bounded and continuous. For any open subset $G\subset \hs$, we can find $f_n \in C_b(\hs)$, $n \in \BN$, such that $0\le f_n \uparrow \mathbf{1}_{G}$ as $n \to \infty$. By dominated convergence theorem, the assertion also holds for $\mathbf{1}_{G}$. The general case then follows by a standard monotone class argument, see, for example, \cite[p.~4]{MR648601}. 
\end{proof}

We now show that $\mathbf{P}^{s,x}$ is a solution to the martingale problem for
\begin{equation}\label{defioflt}
    L_{t}=\frac{1}{2}\triangle + b(t,\cdot) \cdot \nabla.
\end{equation}

\begin{theorem}\label{solutiontothempp}The probability measure $\mathbf{P}^{s,x}$ is a solution to the martingale problem for $L_{t}$ starting from
$(s,x)$.
\end{theorem}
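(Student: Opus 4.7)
The plan is to pass to the limit along the weakly convergent subsequence $(\mathbf{P}_{n_k}^{s,x})$, starting from the fact that each $\mathbf{P}_n^{s,x}$ solves the martingale problem for the mollified generator $L_t^n := \tfrac{1}{2}\triangle + b_n(t,\cdot)\cdot\nabla$. Three properties must be checked: the initial condition $\mathbf{P}^{s,x}(X_t = x, \forall t \le s) = 1$, the integrability condition (\ref{wsintegrcondi}), and the martingale identity. The first is immediate from the Portmanteau theorem, since $\{X_t = x,\ \forall t \le s\}$ is closed in $\Omega$ and carries full mass under every $\mathbf{P}_n^{s,x}$. For the integrability, I apply Proposition \ref{thm2.3.200} to the bounded measurable truncations $|b|\wedge N$, getting $\BE^{s,x}[\int_s^\infty e^{-\lambda(u-s)}(|b|\wedge N)(u,X_u)du] = S^\lambda(|b|\wedge N)(s,x)$. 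Iterating Lemmas \ref{lemma2.2.2.1} and \ref{lemma2.2.2.3} together with (\ref{neweqass2}) exactly as in the proof of Lemma \ref{newlemmasn} (but with $b$ in place of $b_n$ and $|b|\wedge N$ in place of $g$), one obtains $|R^\lambda(BR^\lambda)^k(|b|\wedge N)| \le (1/2)^k N^{\alpha,+}_{2\epsilon_1}(|b|)$ uniformly in $N$, hence $S^\lambda(|b|\wedge N) \le 2 N^{\alpha,+}_{2\epsilon_1}(|b|) < \infty$. Monotone convergence as $N\to\infty$ yields $\BE^{s,x}[\int_s^\infty e^{-\lambda(u-s)}|b|(u,X_u)du] < \infty$, which implies (\ref{wsintegrcondi}).

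For the martingale identity, fix $f \in C_0^\infty(\Rd)$, times $s \le t_1 < t_2$, and a bounded continuous $\CM_{t_1}$-measurable functional $g$ on $\Omega$. The martingale problem solved by $\mathbf{P}_{n_k}^{s,x}$ gives
\[
\BE_{n_k}^{s,x}\Big[g\Big(f(X_{t_2}) - f(X_{t_1}) - \int_{t_1}^{t_2}\big(\tfrac{1}{2}\triangle f + b_{n_k} \cdot \nabla f\big)(u, X_u)du\Big)\Big] = 0.
\]
Since $f$ and $\triangle f$ are bounded and continuous, weak convergence handles the $f(X_{t_2})-f(X_{t_1})$ and $\int_{t_1}^{t_2}\tfrac{1}{2}\triangle f(X_u)du$ terms directly. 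The delicate part is to show
\[
\BE_{n_k}^{s,x}\Big[g\int_{t_1}^{t_2} b_{n_k}\cdot\nabla f(X_u)\,du\Big] \longrightarrow \BE^{s,x}\Big[g\int_{t_1}^{t_2} b\cdot\nabla f(X_u)\,du\Big].
\]
I would split $b_{n_k}\cdot\nabla f = (b_{n_k}-b)\cdot\nabla f + b\cdot\nabla f$. The first summand is controlled uniformly: using the same iterative argument as in Lemma \ref{newlemmasn} but applied to $|b_{n_k}-b|$ in place of $g$, one gets $S_{n_k}^\lambda|b_{n_k}-b| \le 2 N^{\alpha,+}_{2\epsilon_1}(|b_{n_k}-b|)$, and hence
\[
\BE_{n_k}^{s,x}\Big[\int_{t_1}^{t_2}|b_{n_k}-b|(u,X_u)du\Big] \le 2 e^{\lambda t_2} N^{\alpha,+}_{2\epsilon_1}(|b_{n_k}-b|) \longrightarrow 0
\]
by (\ref{neq2.2.3}). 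For the second summand, since $b$ is merely measurable, the functional $\omega \mapsto \int_{t_1}^{t_2} b(u,\omega_u)\cdot\nabla f(\omega_u)du$ is not continuous, so weak convergence cannot be applied directly; instead, I interpose the mollifications $b_m = b\ast\phi_m$. For each fixed $m$, $b_m\cdot\nabla f$ is bounded and continuous on $\Omega$, so weak convergence gives $\BE_{n_k}^{s,x}[g\int_{t_1}^{t_2}b_m\cdot\nabla f\,du] \to \BE^{s,x}[g\int_{t_1}^{t_2}b_m\cdot\nabla f\,du]$. The remainders $\BE_{n_k}^{s,x}[\int_{t_1}^{t_2}|b_m-b|du]$ and $\BE^{s,x}[\int_{t_1}^{t_2}|b_m-b|du]$ are both bounded by $2e^{\lambda t_2}N^{\alpha,+}_{2\epsilon_1}(|b_m-b|)$ (using Proposition \ref{thm2.3.200} with the truncations $|b_m-b|\wedge N$ and monotone convergence for the latter), and this tends to $0$ as $m\to\infty$ by Proposition \ref{nlemma2.1.3}. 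Choosing $m$ large first to make the remainders small, then letting $k\to\infty$, concludes the passage to the limit.

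The main obstacle is the drift convergence: the natural integrand $b\cdot\nabla f$ fails to be a continuous functional of the path, so one cannot invoke weak convergence alone. The crucial device is the two-scale approximation---using the forward-Kato estimate $S^\lambda_n|b_n-b|\to 0$ to replace $b_n$ by $b$ uniformly in $n$, and using the smooth approximation $b_m$ (with forward-Kato control on $|b-b_m|$) to gain enough continuity for weak convergence---and this only works because Assumption \ref{assumption2.2.3} makes the perturbation series converge geometrically at every intermediate step.
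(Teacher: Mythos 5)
Your proof is correct and follows essentially the same route as the paper: pass to the weak limit, handle the continuous terms directly, and control the drift terms via the uniform resolvent bounds $S_n^{\lambda}|b_n-b_m|\le C\,N^{\alpha,+}_{2\epsilon_1}(|b_n-b_m|)$ together with Proposition \ref{nlemma2.1.3}, interposing a fixed mollification to gain continuity for the weak convergence step. Your splitting into $(b_{n_k}-b)$ plus $(b-b_m)$ is just a reorganization of the paper's three-term triangle inequality with intermediary $b_{n_1}$, and your explicit verification of the initial and integrability conditions (\ref{wsintegrcondi}) via truncations $|b|\wedge N$ is a welcome addition that the paper leaves implicit.
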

\begin{proof}
We need to show that
\[
    f(X_{t})-\int^{t}_{s}L_{u}f(X_{u})du
\]
is a $\mathbf{P}^{s,x}$-martingale (with respect to the filtration $(\CM_t)_{t\ge0})$) after time $s$ for each $ f \in C^{\infty}_{0}(\mathbb{R}^{d})$. Suppose $s \le t_{1} < t_{2}$, $0 \le r_{1} \le \cdots \le r_{l} \le t_{1}$ and $g_{1}, \cdots, g_{l} \in C_{0}(\mathbb{R}^{d})$ with $l \in \BN$. Let $Y:=\prod_{j=1}^{l}g_{j}(X_{r_{j}})$. It suffices to show that
\begin{equation}\label{eqexistencesuf}
   \mathbf{E}^{s,x} \Big[Y\big(f(X_{t_1})-\int^{t_1}_{s}L_{u}f(X_{u})du\big)\Big]=\mathbf{E}^{s,x} \Big[Y\big(f(X_{t_2})-\int^{t_2}_{s}L_{u}f(X_{u})du\big)\Big].
\end{equation}

Recall that $\mathbf{P}^{s,x}$ is the weak limit of a subsequence of $(\mathbf{P}^{s,x}_{n})_{n \in \BN}$. For simplicity, we denote this subsequence still by $(\mathbf{P}^{s,x}_{n})_{n \in \BN}$. Since $\mathbf{P}^{s,x}_{n}$ solves the martingale problem for \[
    \frac{1}{2}\triangle + b_{n}(t,\cdot) \cdot \nabla
\]
starting from $(s,x)$, so
\[
    f(X_{t})-\int^{t}_{s}\big(\frac{1}{2}\triangle f(X_{u}) +     b_{n}(u,X_{u}) \cdot \nabla f(X_{u})\big)du
\]
is a $\mathbf{P}^{s,x}_{n}$-martingale after time $s$. Therefore,
\begin{align}
   & \mathbf{E}^{s,x}_{n}\Big[Y\Big(f(X_{t_{1}})-\int^{t_{1}}_{s}\big(\frac{1}{2}\triangle f(X_{u}) +     b_{n}(u,X_{u}) \cdot \nabla f(X_{u})\big)du\Big)\Big] \notag \\
 \label{eqmartingale}   =&\mathbf{E}^{s,x}_{n}\Big[Y\Big(f(X_{t_{2}})-\int^{t_{2}}_{s}\big(\frac{1}{2}\triangle f(X_{u}) +     b_{n}(u,X_{u}) \cdot \nabla f(X_{u})\big)du\Big)\Big].
 \end{align}
By the weak convergence of $\mathbf{P}^{s,x}_{n}$ to $\mathbf{P}^{s,x}$ as $n \to \infty$, we have
\begin{equation}\label{eqthm1i12}
\lim_{n \to \infty}\mathbf{E}^{s,x}_{n}\Big[Y\big(f(X_{t_{i}})-\int^{t_{i}}_{s}\frac{1}{2} \triangle f(X_{u})du \big) \Big]=\mathbf{E}^{s,x}\Big[Y\big(f(X_{t_{i}})-\int^{t_{i}}_{s}\frac{1}{2} \triangle f(X_{u})du\big) \Big]
\end{equation}
for $i=1,2$. If we can show that
\begin{equation}\label{eqthm1main}
    \lim_{n \to \infty}\mathbf{E}^{s,x}_{n}\Big[Y\int^{t_{i}}_{s}b_{n}(u,X_{u})\cdot \nabla f(X_{u})du\Big]=\mathbf{E}^{s,x}\Big[Y\int^{t_{i}}_{s}b(u,X_{u})\cdot \nabla f(X_{u})du\Big]
    \end{equation}
for $i =1,2$, then (\ref{eqexistencesuf}) follows from (\ref{eqmartingale}), (\ref{eqthm1i12}) and (\ref{eqthm1main}). Next we show that (\ref{eqthm1main}) is true.

According to (\ref{rlambdabn}) and (\ref{estiforsnm}), we have for each $k \ge 1$
\begin{align}
& \mathbf{E}^{s,x}_{k}\Big[\int^{t_{1}}_{s}(|b_{n}-b_m|)(u,X_{u})du\Big]\notag\\
\le & e^{\lambda(t_1-s)}  S_{k}^{\lambda}(|b_{n}-b_{m}|)(s,x)\notag \\
\le & e^{\lambda(t_1-s)} R^{\lambda}(|b_{n}-b_{m}|)(s,x)+ \frac{1}{2}C_{1}e^{\lambda(t_1-s)}N^{\alpha,+}_{2\epsilon_{1}}(|b_{n}-b_{m}|)  R^{\lambda} (|b_{k}|)\notag\\
\le &  \frac{5}{4}e^{\lambda(t_1-s)}N^{\alpha,+}_{2\epsilon_{1}}(|b_{n}-b_{m}|)  \notag\\
\label{neweqek}\le &  \frac{5}{4}e^{\lambda(t_1-s)} N^{\alpha,+}_{2\epsilon_{1}}(|b_{n}-b|+|b_{m}-b|) \to 0, \quad  \text{as} \ n,m \to \infty.
\end{align}
Similarly to (\ref{neweqek}), we obtain
\begin{align}
& \mathbf{E}^{s,x}\Big[\int^{t_{1}}_{s}(|b_{n}-b|)(u,X_{u})du\Big] \notag\\
\label{neweqesx}\le &  \frac{5}{4}e^{\lambda(t_1-s)}N^{\alpha,+}_{2\epsilon_{1}}(|b_{n}-b|) \to 0, \quad \text{as} \ n \to \infty.
\end{align}
By (\ref{neweqek}) and (\ref{neweqesx}), for any given $\epsilon>0$, we can find $n_{1} \in \BN$, which is independent of $k$, such that for all $n,m \ge n_{1}$
\[
\bigg|\mathbf{E}^{s,x}_{k}\Big[Y\int^{t_{1}}_{s}b_{n}(u,X_{u})\cdot \nabla f(X_{u})du\Big]-\mathbf{E}^{s,x}_{k}\Big[Y\int^{t_{1}}_{s}b_{m}(u,X_{u})\cdot \nabla f(X_{u})du\Big] \bigg|<\epsilon \]
and
\[
\bigg|\mathbf{E}^{s,x}\Big[Y\int^{t_{1}}_{s}b_{n}(u,X_{u})\cdot \nabla f(X_{u})du\Big]-\mathbf{E}^{s,x}\Big[Y\int^{t_{1}}_{s}b(u,X_{u})\cdot \nabla f(X_{u})du\Big] \bigg|<\epsilon.\]
Note that there exists $n_{2}$ such that for $n \ge n_{2}$
\[
\bigg|\mathbf{E}^{s,x}_{n}\Big[Y\int^{t_{1}}_{s}b_{n_{1}}(u,X_{u})\cdot \nabla f(X_{u})du\Big]-\mathbf{E}^{s,x}\Big[Y\int^{t_{1}}_{s}b_{n_{1}}(u,X_{u})\cdot \nabla f(X_{u})du\Big] \bigg|<\epsilon.\]
If $n \ge \max \{n_{1},n_{2}\}$, then
\begin{align*}&\bigg| \mathbf{E}^{s,x}_{n}\Big[Y\int^{t_{1}}_{s}b_{n}(u,X_{u})\cdot \nabla f(X_{u})du\Big]-\mathbf{E}^{s,x}\Big[Y\int^{t_{1}}_{s}b(u,X_{u})\cdot \nabla f(X_{u})du\Big] \bigg|\\
\le &\bigg|\mathbf{E}^{s,x}_{n}\Big[Y\int^{t_{1}}_{s}b_{n}(u,X_{u})\cdot \nabla f(X_{u})du\Big]-\mathbf{E}^{s,x}_{n}\Big[Y\int^{t_{1}}_{s}b_{n_{1}}(u,X_{u})\cdot \nabla f(X_{u})du\Big] \bigg|\\
&\ +\bigg|\mathbf{E}^{s,x}_{n}\Big[Y\int^{t_{1}}_{s}b_{n_{1}}(u,X_{u})\cdot \nabla f(X_{u})du\Big]-\mathbf{E}^{s,x}\Big[Y\int^{t_{1}}_{s}b_{n_{1}}(u,X_{u})\cdot \nabla f(X_{u})du\Big] \bigg|\\
& \quad + \bigg|\mathbf{E}^{s,x}\Big[Y\int^{t_{1}}_{s}b_{n_{1}}(u,X_{u})\cdot \nabla f(X_{u})du\Big]-\mathbf{E}^{s,x}\Big[Y\int^{t_{1}}_{s}b(u,X_{u})\cdot \nabla f(X_{u})du\Big] \bigg|\\
\le & 3 \epsilon.
\end{align*}
Therefore,
\[
    \lim_{n \to \infty}\mathbf{E}^{s,x}_{n}\Big[Y\int^{t_{1}}_{s}b_{n}(u,X_{u})\cdot \nabla f(X_{u})du\Big]=\mathbf{E}^{s,x}\Big[Y\int^{t_{1}}_{s}b(u,X_{u})\cdot \nabla f(X_{u})du\Big].\]
Similarly,
\[
    \lim_{n \to \infty}\mathbf{E}^{s,x}_{n}\Big[Y\int^{t_{2}}_{s}b_{n}(u,X_{u})\cdot \nabla f(X_{u})du\Big]=\mathbf{E}^{s,x}\Big[Y\int^{t_{2}}_{s}b(u,X_{u})\cdot \nabla f(X_{u})du\Big].\]
Thus (\ref{eqthm1main}) is true. This completes the proof. 
\end{proof}

If there is another subsequence $( \mathbf{P}^{s,x}_{\tilde{n}_k})_{k \ge 1}$ of $( \mathbf{P}^{s,x}_{n})_{n \ge 1}$ which converges weakly to a probability measure $\tilde{\mathbf{P}}^{s,x}$, then $\tilde{\mathbf{P}}^{s,x}$ is obviously also a solution to the martingale problem for $L_{t}$ starting from $(s,x)$. Now we proceed to show that $\tilde{\mathbf{P}}^{s,x}=\mathbf{P}^{s,x}$.

The following lemma is a variant of \cite[Theorem~6.1.3]{MR2190038} and plays an important role in showing the uniqueness of solutions to the martingale problem for $L_t$. It should be noted that the boundedness of the drift $b$ was needed in \cite[Theorem~6.1.3]{MR2190038}. For our case this restriction can be dropped. Since the proof is almost the same as that of \cite[Theorem~6.1.3]{MR2190038}, we put it in the appendix.

\begin{lemma}\label{newlemmamgt}Suppose that the probability measure $\mathbf{Q}^{s,x}$ on $(\Omega=C([0,\infty);\mathbb{R}^{d}),\mathcal{M})$ is a solution to the martingale problem for $L_{t}$ starting from $(s,x)$, where $L_t$ is defined by (\ref{defioflt}). For a given $t\ge s$, we denote by $Q_{\omega}(A)=Q(\omega,A): \Omega \times \CM \to [0,1]$ the regular conditional distribution of $\mathbf{Q}^{s,x}$ given $\mathcal{M}_{t}$. Then there exists a $\mathbf{Q}^{s,x}$-null set $N\in
\mathcal{M}_{t}$ such that $Q_{\omega}$ solves the martingale problem for $L_{t}$ starting from $(t, \omega_t)$ for each $\omega \notin N$.
\end{lemma}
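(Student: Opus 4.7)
The plan is to mimic the classical proof of Theorem 6.1.3 in \cite{MR2190038}, handling the unbounded drift by exploiting the forward-Kato integrability condition (\ref{wsintegrcondi}) rather than sup-norm bounds on $b \cdot \nabla f$. The proof breaks into four manageable pieces.

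First, since $\Omega = C([0,\infty); \mathbb{R}^d)$ is Polish and $\mathcal{M}$ is countably generated, a regular conditional distribution $\omega \mapsto Q_\omega$ of $\mathbf{Q}^{s,x}$ given $\mathcal{M}_t$ exists. For each rational $r \le t$ the random variable $X_r$ is $\mathcal{M}_t$-measurable, so by the defining property of regular conditional distributions, $Q_\omega(X_r = \omega_r) = 1$ outside a null set depending on $r$. Taking a countable union over rationals and invoking continuity of paths produces a $\mathbf{Q}^{s,x}$-null set $N^{(1)} \in \mathcal{M}_t$ such that, for $\omega \notin N^{(1)}$, $Q_\omega$ is concentrated on paths agreeing with $\omega$ on $[0,t]$, securing the initial condition at $(t, \omega_t)$.

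Second, I would fix a countable family $\mathcal{D} \subset C_0^\infty(\mathbb{R}^d)$ that is dense in the $C^2$-topology on compact sets. For each $f \in \mathcal{D}$, the process $M^f_u := f(X_u) - \int_s^u L_r f(X_r)\,dr$ is a $\mathbf{Q}^{s,x}$-martingale after time $s$. For each pair of rationals $u_1 < u_2$ in $[t,\infty)$ and each bounded $\mathcal{M}_{u_1}$-measurable $Y$ drawn from a countable determining class, the identity
\[
\mathbf{E}^{\mathbf{Q}^{s,x}}\big[Y(M^f_{u_2} - M^f_{u_1}) \,\big|\, \mathcal{M}_t\big] = 0 \qquad \mathbf{Q}^{s,x}\text{-a.s.}
\]
holds up to a null set depending on $(f,u_1,u_2,Y)$. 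The countable union of all these null sets yields $N^{(2)}$ such that, for $\omega \notin N^{(2)}$, $M^f_u - M^f_t$ is a $Q_\omega$-martingale after time $t$ for every $f \in \mathcal{D}$. At the same time, the $\mathbf{Q}^{s,x}$-a.s.\ finiteness of $\int_s^T |b(u,X_u)|\,du$ for each rational $T$ transfers $Q_\omega$-a.s.\ outside a further null set $N^{(3)}$. Set $N := N^{(1)} \cup N^{(2)} \cup N^{(3)}$.

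Third, for $\omega \notin N$ one extends the martingale identity from $\mathcal{D}$ to every $f \in C_0^\infty(\mathbb{R}^d)$ by approximation: pick $f_k \in \mathcal{D}$ with $f_k, \nabla f_k, \Delta f_k$ converging uniformly to $f, \nabla f, \Delta f$ on a common compact neighbourhood of $\mathrm{supp}(f)$. Then $L_r f_k(X_r) \to L_r f(X_r)$ pointwise, and the inequality
\[
    |L_r f_k(X_r)| \le \tfrac{1}{2}\sup_k \|\Delta f_k\|_\infty + \sup_k \|\nabla f_k\|_\infty \cdot |b(r,X_r)|
\]
provides a $dr$-integrable dominating function on each bounded interval, thanks to $N^{(3)}$. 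Dominated convergence passes the martingale property to $f$, completing the verification.

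The main obstacle is precisely this last step. For bounded $b$ (as in \cite{MR2190038}) the domination is trivial; here one must carefully transfer the pathwise Kato-class integrability of $b(\cdot, X_\cdot)$ from $\mathbf{Q}^{s,x}$ to the conditional measures $Q_\omega$, which is the reason $N^{(3)}$ must be built in from the outset and why one needs $\mathcal{D}$ to be countable with uniform control of $\|\nabla f_k\|_\infty$ on the supports.
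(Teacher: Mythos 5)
Your overall architecture (countable dense family in $C_0^\infty$, a null set per function, transferring the pathwise finiteness of $\int|b(r,X_r)|\,dr$ to the conditional measures, then approximating a general $f$) matches the paper's proof. But the final step has a genuine gap. To conclude
\[
\mathbf{E}^{Q_\omega}\bigl[Y(M^{f_k}_{u_2}-M^{f_k}_{u_1})\bigr]\longrightarrow \mathbf{E}^{Q_\omega}\bigl[Y(M^{f}_{u_2}-M^{f}_{u_1})\bigr]
\]
you need a dominating function that is integrable \emph{with respect to $Q_\omega$}, not merely a $dr$-integrable bound along each path. Your candidate dominant is $C_1+C_2\int_{u_1}^{u_2}|b(r,X_r)|\,dr$, and your set $N^{(3)}$ only guarantees that this random variable is $Q_\omega$-a.s.\ \emph{finite}; nothing in the hypotheses gives $\mathbf{E}^{Q_\omega}\bigl[\int_{u_1}^{u_2}|b(r,X_r)|\,dr\bigr]<\infty$. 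Condition (\ref{wsintegrcondi}) asserts only almost-sure finiteness, and the martingale property of $M^f$ under $\mathbf{Q}^{s,x}$ gives integrability of the \emph{signed} integrals $\int b\cdot\nabla f\,dr$, from which one cannot extract integrability of $\int|b|\,dr$. So the interchange of limit and $Q_\omega$-expectation is unjustified, and with it the claim that $M^f$ is a genuine $Q_\omega$-martingale.

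The paper circumvents exactly this by localizing: it introduces the stopping times $\sigma_l:=\inf\{u\ge s:\int_s^u|b(r,X_r)|\,dr>l\}$, notes that $Q_\omega(\sigma_l\to\infty)=1$ off a null set (your $N^{(3)}$ plays this role), and works with the stopped processes $M_{f_{n_k}}(u\wedge\sigma_l)$. These converge \emph{boundedly} (the drift integral up to $\sigma_l$ is at most $l$ times a sup-norm), so dominated convergence applies with a constant dominant, yielding that $M_f(\cdot\wedge\sigma_l)$ is a $Q_\omega$-martingale for every $l$. This shows only that $Q_\omega$ solves the \emph{local} martingale problem; the last move — which is also missing from your write-up — is to invoke the equivalence of the local and genuine martingale problems for generators with second-order part $\triangle/2$ (\cite[Proposition~5.4.11]{MR1121940}). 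If you insert the localization by $\sigma_l$ before taking limits and close with that equivalence, your argument becomes the paper's.
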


\begin{proposition}\label{unilimitofsub}
If $\tilde{\mathbf{P}}^{s,x}$ is another limit point of  $\{ \mathbf{P}^{s,x}_{n}:n \in \BN\}$ under the topology of weak convergence for measures, then we have $\tilde{\mathbf{P}}^{s,x}=\mathbf{P}^{s,x}$.
\end{proposition}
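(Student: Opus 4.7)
The first observation is that Proposition~\ref{thm2.3.200} applies equally well to $\tilde{\mathbf{P}}^{s,x}$: its proof used nothing about $\mathbf{P}^{s,x}$ beyond it being a weak limit of a subsequence of $(\mathbf{P}^{s,x}_n)$, a property also enjoyed by $\tilde{\mathbf{P}}^{s,x}$. Consequently
\[
\mathbf{E}^{s,x}\Big[\int_{s}^{\infty}e^{-\lambda(t-s)}f(t,X_{t})dt\Big]=\tilde{\mathbf{E}}^{s,x}\Big[\int_{s}^{\infty}e^{-\lambda(t-s)}f(t,X_{t})dt\Big]=S^{\lambda}f(s,x)
\]
for every $\lambda>0$ and every bounded measurable $f$ on $\hs$.

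Specializing to $f(t,y)=g(y)$ with $g\in C_b(\Rd)$, both maps $t\mapsto\mathbf{E}^{s,x}[g(X_t)]$ and $t\mapsto\tilde{\mathbf{E}}^{s,x}[g(X_t)]$ are continuous on $[s,\infty)$ (by path continuity and dominated convergence) and have identical Laplace transforms, hence coincide. A monotone-class extension upgrades this to $\mathbf{E}^{s,x}[g(X_t)]=\tilde{\mathbf{E}}^{s,x}[g(X_t)]$ for every bounded measurable $g$, so the one-dimensional marginals agree at every $t\ge s$.

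The third stage is induction on the number of time points, driven by Lemma~\ref{newlemmamgt}. Assume $\mathbf{E}^{s,x}[\prod_{i=1}^n g_i(X_{t_i})]=\tilde{\mathbf{E}}^{s,x}[\prod_{i=1}^n g_i(X_{t_i})]$ for all bounded measurable $g_i$ and $s\le t_1<\cdots<t_n$. For $t_{n+1}>t_n$, Lemma~\ref{newlemmamgt} produces regular conditional distributions of $\mathbf{P}^{s,x}$ and $\tilde{\mathbf{P}}^{s,x}$ given $\CM_{t_n}$ which are, for a.e.~$\omega$, solutions to the martingale problem for $L_t$ starting from $(t_n,\omega_{t_n})$. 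Provided one can show that $y\mapsto h(y):=\mathbf{E}_{Q}[g_{n+1}(X_{t_{n+1}})]$ is a bounded measurable function of $y$ alone, independent of the particular solution $Q$ starting at $(t_n,y)$, the inductive hypothesis applied with $g_n\cdot h$ in place of $g_n$ gives
\[
\mathbf{E}^{s,x}\Big[\prod_{i=1}^{n+1}g_i(X_{t_i})\Big]=\mathbf{E}^{s,x}\Big[\prod_{i=1}^n g_i(X_{t_i})h(X_{t_n})\Big]=\tilde{\mathbf{E}}^{s,x}\Big[\prod_{i=1}^n g_i(X_{t_i})h(X_{t_n})\Big]=\tilde{\mathbf{E}}^{s,x}\Big[\prod_{i=1}^{n+1}g_i(X_{t_i})\Big].
\]
Since the cylinder events generate $\CM$, this yields $\mathbf{P}^{s,x}=\tilde{\mathbf{P}}^{s,x}$.

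The main obstacle is verifying the italicized claim that \emph{every} solution to the martingale problem has resolvent $S^\lambda$, not merely those arising as weak limits. For an arbitrary solution $Q$ starting from $(t_n,y)$, the plan is to apply an Itô-type formula to the smooth time-dependent test function $(r,x)\mapsto e^{-\lambda(r-t_n)}R^\lambda f(r,x)$, which solves the backward PDE $(\lambda-\partial_r-\tfrac12\Delta)R^\lambda f=f$. Using the gradient bound of Lemma~\ref{lemma2.2.2.1} to control the perturbation term and letting the time horizon tend to infinity (justified by boundedness of $R^\lambda f$) yields the first-order Duhamel identity
\[
\mathbf{E}_{Q}\Big[\int_{t_n}^\infty e^{-\lambda(r-t_n)}f(r,X_r)dr\Big]=R^\lambda f(t_n,y)+\mathbf{E}_{Q}\Big[\int_{t_n}^\infty e^{-\lambda(r-t_n)}BR^\lambda f(r,X_r)dr\Big].
\]
Iterating this identity and invoking the Neumann-series representation~(\ref{neweqdefiofslambda}) together with the geometric decay~(\ref{neweqr}) identifies the left-hand side with $S^\lambda f(t_n,y)$, which is independent of $Q$. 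The required function $h$ is then recovered from this resolvent by Laplace inversion and continuity exactly as in the second stage, completing the induction.
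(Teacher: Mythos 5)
Your overall strategy coincides with the paper's: apply Proposition \ref{thm2.3.200} to both limit points to identify the Laplace transforms of the one-dimensional marginals, invert, and then pass to finite-dimensional distributions via regular conditional distributions and Lemma \ref{newlemmamgt}. You are also right to flag that the induction step silently requires knowing that \emph{every} solution of the martingale problem started from $(t_n,\omega_{t_n})$ --- not only weak limits of the approximating sequence --- has resolvent $S^{\lambda}$; the paper isolates exactly this statement as Proposition \ref{thm2.2.3.1} and proves it by the Duhamel/Neumann-series iteration you sketch, so in effect you are folding that later proposition into the present proof.

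The genuine gap is in your resolution of that crux. To iterate the first-order Duhamel identity one must subtract, and hence needs
\begin{equation*}
\mathbf{E}_{Q}\Big[\int_{t_n}^{\infty}e^{-\lambda(r-t_n)}|b(r,X_r)|\,dr\Big]<\infty,
\end{equation*}
since $|BR^{\lambda}f|\le C_{\lambda}\|f\|_{\infty}|b|$ is the only available bound on the perturbation term. For an arbitrary solution $Q$ the only integrability at hand is the almost-sure condition (\ref{wsintegrcondi}); this expectation may a priori be infinite, and Lemma \ref{lemma2.2.2.1} does not rescue you because it presupposes control of $N^{\alpha,+}_{h}$-norms along the path law. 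The paper circumvents this by stopping $Q$ at $\tau_{n}=\sigma_{n}\wedge n$ with $\sigma_{n}=\inf\{t:\int_{s}^{t}|b(u,X_u)|du>n\}$ and gluing on the already-constructed measure $\mathbf{P}^{\tau_{n},X_{\tau_{n}}}$ after $\tau_{n}$, which yields the bound (\ref{newneweqthm2finfty3}) for the glued measures $\mathbf{Q}^{s,x}_{n}$ and allows $n\to\infty$ only at the very end. A second, more routine omission: for general bounded measurable $f$, $R^{\lambda}f$ is not $C^{1,2}$ and does not solve the backward equation classically, so It\^o's formula must first be applied with $g\in C^{1,2}_{b}(\hs)$ (so that $R^{\lambda}g\in C^{1,2}_{b}$) and the resulting identity extended to all of $\CB_{b}(\hs)$ by a monotone-class argument --- which again consumes the integrability secured by the stopping times. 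Without these two devices the iteration underlying your ``main obstacle'' step does not close.
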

\begin{proof}Let $\tilde{\mathbf{E}}^{s,x}[\cdot]$ denote the expectation with respect to the measure $\tilde{\mathbf{P}}^{s,x}$. According to Proposition \ref{thm2.3.200} and the uniqueness of the Laplace transform, we have
\[
\tilde{\mathbf{E}}^{s,x}[f(X_{t})]=\mathbf{E}^{s,x}[f(X_{t})], \quad \forall f\in C_{b}(\mathbb{R}^{d}), \ t \ge 0.
\]
It means that one-dimensional distributions of $\tilde{\mathbf{P}}^{s,x}$ and $\mathbf{P}^{s,x}$ coincide.  Since $\tilde{\mathbf{P}}^{s,x}$ and $\mathbf{P}^{s,x}$ are both solutions to the martingale problem for $L_{t}$, we can use Lemma \ref{newlemmamgt} and the standard argument in the proof of \cite[Theorem~6.2.3]{MR2190038} to show that multi-dimensional distributions of $\tilde{\mathbf{P}}^{s,x}$ and $\mathbf{P}^{s,x}$ are also the same.  Thus $\tilde{\mathbf{P}}^{s,x}=\mathbf{P}^{s,x}$ on $(\Omega,\mathcal{M})$. 
\end{proof}

\begin{cor}\label{psxismeasurable}
The sequence $( \mathbf{P}^{s,x}_{n})_{n \ge 1}$ converges weakly to $\mathbf{P}^{s,x}$. Consequently, the family of measures $\{ \mathbf{P}^{s,x}: \ (s,x) \in \hs\}$ is measurable, that is, $\mathbf{P}^{s,x}(A)$ is measurable in $(s,x) $ for every $A\in \CM$.
\end{cor}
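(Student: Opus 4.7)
The plan is to upgrade tightness together with uniqueness of subsequential weak limits into full convergence $\mathbf{P}^{s,x}_n \wto \mathbf{P}^{s,x}$, and then to derive measurability of $(s,x) \mapsto \mathbf{P}^{s,x}(A)$ by a monotone class argument built on continuous test functions on path space.

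First I would fix $(s,x) \in \hs$ and combine Proposition \ref{newproptight} with \cite[Theorem~1.3.2]{MR2190038} to conclude that $\{\mathbf{P}^{s,x}_n : n \in \BN\}$ is tight, hence relatively compact in the topology of weak convergence on probability measures on $\Omega$. Given any subsequence $(\mathbf{P}^{s,x}_{n_k})_{k \ge 1}$, a further subsequence must therefore converge weakly to some limit $\tilde{\mathbf{P}}^{s,x}$; by Proposition \ref{unilimitofsub}, this limit necessarily coincides with $\mathbf{P}^{s,x}$. The standard subsequence principle then forces the full sequence $\mathbf{P}^{s,x}_n$ to converge weakly to $\mathbf{P}^{s,x}$.

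For the measurability statement, the starting observation is that for each fixed $n$, because $b_n$ is smooth and compactly supported, the associated SDE admits a unique strong solution whose law depends continuously on the initial condition; in particular $(s,x) \mapsto \mathbf{E}^{s,x}_n[f]$ is continuous on $\hs$ for every $f \in C_b(\Omega)$. Passing to the limit $n \to \infty$ using the weak convergence just established, $(s,x) \mapsto \mathbf{E}^{s,x}[f]$ is then Borel measurable on $\hs$ for every $f \in C_b(\Omega)$, being a pointwise limit of continuous functions. A standard monotone class argument (of the type used at the end of the proof of Proposition \ref{thm2.3.200}) extends this measurability first to $f = \mathbf{1}_G$ for open $G \subset \Omega$, by monotone approximation from below by $C_b$-functions, and then to $f = \mathbf{1}_A$ for arbitrary $A \in \CM$ via the Dynkin $\pi$-$\lambda$ theorem; this is precisely the measurability claim.

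The only delicate point is really the first step: it is essential that Proposition \ref{unilimitofsub} rules out alternative subsequential limits, since without uniqueness of limit points one would obtain at best convergence along a subsequence rather than of the entire sequence. Once full weak convergence is in hand, the measurability assertion is a routine consequence of the continuity, for fixed $n$, of the approximating maps $(s,x) \mapsto \mathbf{E}^{s,x}_n[f]$ and a standard Dynkin-class extension.
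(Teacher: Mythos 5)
Your proposal is correct and follows essentially the same route as the paper: tightness plus Proposition \ref{unilimitofsub} upgrades subsequential convergence to full weak convergence, and measurability of $(s,x) \mapsto \mathbf{P}^{s,x}(A)$ is then obtained by passing to the limit in the (measurable, in fact continuous) maps $(s,x) \mapsto \mathbf{E}^{s,x}_n[\,\cdot\,]$ followed by a monotone-class extension. The only cosmetic difference is that the paper works with cylinder test functions $f(X_t)$, $f \in C_b(\mathbb{R}^d)$, and cites $\CM = \sigma(f(X_t): f \in C_b(\mathbb{R}^d),\, t \ge 0)$, whereas you work with $f \in C_b(\Omega)$ and spell out the Dynkin $\pi$--$\lambda$ step explicitly.
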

\begin{proof}The weak convergence of $(\mathbf{P}^{s,x}_{n})_{n \ge 1}$ to $ \mathbf{P}^{s,x}$ follows from Proposition \ref{unilimitofsub} and the fact that $ \{\mathbf{P}^{s,x}_n: n \in \BN\}$ is tight. Let $f\in C_{b}(\mathbb{R}^{d})$ and $ t \ge 0$. Since $b_n$ is smooth and of compact support, the function $\mathbf{E}^{s,x}_n[f(X_{t})]$ is measurable in $(s,x)$. By the weak convergence of $( \mathbf{P}^{s,x}_{n})_{n \ge 1}$ to $ \mathbf{P}^{s,x}$, the function $\mathbf{E}^{s,x}[f(X_{t})]$ is the limit of $\mathbf{E}^{s,x}_n[f(X_{t})]$ and thus also measurable in $(s,x)$. Since $\CM=\sigma(f(X_{t}): f\in C_{b}(\mathbb{R}^{d}), \ t \ge 0)$, the assertion follows. 
\end{proof}

We now prove the uniqueness of solutions to the martingale problem for $L_{t}$. This can be done in the same way as in \cite{MR1964949}. The first step is to show that one-dimensional distributions of solutions to the martingale problem are unique.

\begin{proposition}\label{thm2.2.3.1} Let $\mathbf{P}^{s,x}$ be the solution to the martingale problem for $L_t$ that we derived in Theorem \ref{solutiontothempp}. If there exists another probability measure $\mathbf{Q}^{s,x}$ that solves the martingale problem for $L_{t}$ starting from $(s,x)$, then for all $f\in C_{b}(\mathbb{R}^{d})$ we have
\begin{equation}\label{BSTeqthm2.2.3.1.0}
\mathbf{E}_{\mathbf{Q}^{s,x}}[f(X_{t})]=\mathbf{E}_{\mathbf{P}^{s,x}}[f(X_{t})], \quad \forall t \ge 0.
\end{equation}
\end{proposition}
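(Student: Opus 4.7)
The plan is to show that for any solution $\mathbf{Q}^{s,x}$ to the martingale problem for $L_t$, the $\lambda$-resolvent
\[
G^\lambda_{\mathbf{Q}} f(s,x) := \mathbf{E}_{\mathbf{Q}^{s,x}}\Big[\int_s^\infty e^{-\lambda(t-s)} f(t,X_t)\,dt\Big]
\]
coincides with the operator $S^\lambda$ from (\ref{neweqdefiofslambda}). Once this is done, Proposition \ref{thm2.3.200} identifies $S^\lambda$ as the $\lambda$-resolvent of $\mathbf{P}^{s,x}$, so the Laplace transforms of $t\mapsto\mathbf{E}_{\mathbf{Q}^{s,x}}[f(X_t)]$ and $t\mapsto\mathbf{E}_{\mathbf{P}^{s,x}}[f(X_t)]$ agree, and by continuity of these functions in $t$ we obtain (\ref{BSTeqthm2.2.3.1.0}).

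The first step is to derive the resolvent identity
\[
G^\lambda_{\mathbf{Q}} f = R^\lambda f + G^\lambda_{\mathbf{Q}}(BR^\lambda f)
\]
for $f \in C^\infty_0(\BR^d)$. For such $f$, $u := R^\lambda f$ belongs to $C^{1,2}(\hs)$, is bounded, has bounded gradient by Lemma \ref{lemma2.2.2.2}, and satisfies $\partial_t u + \frac{1}{2}\Delta u = \lambda u - f$. Because the drift $b$ has compact support (Assumption \ref{assumption2.2.3}), the integral $\int_s^\infty e^{-\lambda(r-s)}|b\cdot\nabla u|(r,X_r)\,dr$ is finite a.s.\ under $\mathbf{Q}^{s,x}$; a standard space-time extension of the martingale property (obtained by localization at exit times from large balls, using the boundedness of $u$) shows that $e^{-\lambda(t-s)} u(t,X_t) - u(s,x) - \int_s^t e^{-\lambda(r-s)}(BR^\lambda f - f)(r,X_r)\,dr$ is a $\mathbf{Q}^{s,x}$-martingale, and letting $t\to\infty$ gives the identity.

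The same identity extends to any $h\in\CF\CK^\alpha_{d-1}$ with support in a fixed compact slab, via smoothing $h$ by convolution with the mollifiers $\phi_n$ from Lemma \ref{nlemma2.1.2} and Proposition \ref{nlemma2.1.3} and then invoking dominated convergence on the basis of Lemmas \ref{lemma2.2.2.1} and \ref{lemma2.2.2.3}. Iterating the identity on $f$, then on $BR^\lambda f$, and so on---each iterate $(BR^\lambda)^k f$ lies in $\CF\CK^\alpha_{d-1}$ with support in the slab (\ref{supportofBn}) and forward-Kato norm bounded by (\ref{neweqnormofBRk})---yields
\[
G^\lambda_{\mathbf{Q}} f = \sum_{k=0}^n R^\lambda(BR^\lambda)^k f + G^\lambda_{\mathbf{Q}}(BR^\lambda)^{n+1} f
\]
for every $n\in\BN$.

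The main obstacle is showing the remainder vanishes uniformly in $(s,x)$. I would first establish the auxiliary bound $G^\lambda_{\mathbf{Q}}|b| \le 2 N^{\alpha,+}_{2\epsilon_1}(|b|)$: truncating $|b|$ at level $M$ gives a bounded function $|b|_M$ for which the resolvent identity yields
\[
G^\lambda_{\mathbf{Q}}|b|_M \le R^\lambda|b| + C_1 N^{\alpha,+}_{2\epsilon_1}(|b|)\,G^\lambda_{\mathbf{Q}}|b|_M,
\]
and since $C_1 N^{\alpha,+}_{2\epsilon_1}(|b|)<1/2$ by (\ref{neweqass2}) and (\ref{neq2.2.3.1}), monotone convergence in $M$ combined with Lemma \ref{lemma2.2.2.3} gives $G^\lambda_{\mathbf{Q}}|b|\le 2N^{\alpha,+}_{2\epsilon_1}(|b|)$. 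Combining this with the pointwise estimate $|(BR^\lambda)^{n+1}f|\le C_1|b|\,N^{\alpha,+}_{2\epsilon_1}((BR^\lambda)^n f)$ and the geometric decay (\ref{neweqnormofBRk}) gives
\[
\bigl|G^\lambda_{\mathbf{Q}}(BR^\lambda)^{n+1} f\bigr| \le C'_\lambda \|f\|_\infty (1/2)^n \longrightarrow 0
\]
uniformly on $\hs$. Comparing with (\ref{neweqdefiofslambda}) and (\ref{neweqr}) identifies $G^\lambda_{\mathbf{Q}} f = S^\lambda f$ on bounded continuous $f$, as desired.
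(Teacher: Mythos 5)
Your overall strategy---identify the resolvent of $\mathbf{Q}^{s,x}$ with $S^\lambda=\sum_k R^\lambda(BR^\lambda)^k$ and invert the Laplace transform---is the same as the paper's, but you apply it directly to $\mathbf{Q}^{s,x}$, and there is a genuine gap at exactly the point the paper is careful about: the integrability
\[
\mathbf{E}_{\mathbf{Q}^{s,x}}\Big[\int_s^\infty e^{-\lambda(t-s)}|b(t,X_t)|\,dt\Big]<\infty .
\]
The definition of the martingale problem only gives $\int_s^t|b(u,X_u)|\,du<\infty$ almost surely, not in expectation, so ``finite a.s.'' does not justify passing from the (local) martingale identity for $u=R^\lambda f$ to an identity in expectation, nor the Fubini and dominated-convergence steps needed to iterate it. Your attempted bootstrap of the auxiliary bound $G^\lambda_{\mathbf{Q}}|b|\le 2N^{\alpha,+}_{2\epsilon_1}(|b|)$ is circular and, as written, contains a wrong-direction estimate: the resolvent identity for the truncation $|b|_M$ produces the term $G^\lambda_{\mathbf{Q}}\bigl(b\cdot\nabla R^\lambda|b|_M\bigr)\le C_1N^{\alpha,+}_{2\epsilon_1}(|b|)\,G^\lambda_{\mathbf{Q}}(|b|)$, with the full untruncated $|b|$ inside $G^\lambda_{\mathbf{Q}}$ (not $|b|_M$, since the drift in $B$ is $b$ itself), which is precisely the quantity you do not yet know to be finite; and, more fundamentally, even deriving the identity for the bounded function $|b|_M$ already presupposes the integrability of $\int_s^\infty e^{-\lambda(r-s)}|b(r,X_r)|\,dr$ under $\mathbf{Q}^{s,x}$ that you are trying to establish.

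The paper circumvents this by localization plus gluing: it sets $\tau_n=\sigma_n\wedge n$ with $\sigma_n=\inf\{t\ge s:\int_s^t|b(u,X_u)|\,du>n\}$ and replaces $\mathbf{Q}^{s,x}$ by the measure $\mathbf{Q}^{s,x}_n$ that follows $\mathbf{Q}^{s,x}$ up to $\tau_n$ and then $\mathbf{P}^{\tau_n,X_{\tau_n}}$. For $\mathbf{Q}^{s,x}_n$ the integrability (\ref{newneweqthm2finfty3}) holds by construction (the piece before $\tau_n$ contributes at most $n$, and the piece after is controlled by $S^\lambda|b|\le 2N^{\alpha,+}_{2\epsilon_1}(|b|)$), so the resolvent identity can be derived and iterated for $\mathbf{Q}^{s,x}_n$, yielding $\mathbf{E}_{\mathbf{Q}^{s,x}_n}[f(X_t)]=\mathbf{E}_{\mathbf{P}^{s,x}}[f(X_t)]$; the proposition then follows by letting $n\to\infty$ and using $\tau_n\to\infty$ $\mathbf{Q}^{s,x}$-a.s. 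To repair your argument you would need to insert this (or an equivalent) localization before running the perturbation series; the rest of your outline then goes through essentially as in the paper.
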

\begin{proof}Our proof is adapted from the proof of  \cite[Proposition 5.1]{MR1964949}. Since the details of the proof may obscure the idea, we now outline the reasoning behind our analysis. We will
first introduce a sequence of stopping times $(\tau_{n})_{n \in \BN}$ that converges a.s. to $\infty$ under the measure $\mathbf{Q}^{s,x}$. Next, we ``glue" the measures $\mathbf{Q}^{s,x}$ and
$\mathbf{P}^{ \tau_{n}, X_{\tau_{n}}}$ at the stopping time $\tau_n$. In this way, we obtain a new measure $\mathbf{Q}^{s,x}_{n}$. Roughly speaking, under the measure $\mathbf{Q}^{s,x}_{n}$, the canonical process $(X_t)_{t\ge 0}$ on the path space  behaves according to
$\mathbf{Q}^{s,x}$ before $\tau_n$ and then according to $\mathbf{P}^{ \tau_{n}, X_{\tau_{n}}}$ after $\tau_n$. By the introduction of $\tau_n$, we obtain the inequality (\ref{newneweqthm2finfty3})
(see below) for $\mathbf{Q}^{s,x}_{n}$, which allows us to conveniently use the theorems of Fubini and dominated convergence. By using the standard argument, we  then show that $\mathbf{Q}^{s,x}_{n}$ is also a solution to the martingale problem for
$L_t$ starting from $(s,x)$ and, further,
\[
\mathbf{E}_{\mathbf{Q}^{s,x}_{n}}[f(X_{t})]=\mathbf{E}_{\mathbf{P}^{s,x}}[f(X_{t})],\quad \forall f\in C_{b}(\mathbb{R}^{d}),\ t \ge 0.\]
With $n \to \infty$, we get (\ref{BSTeqthm2.2.3.1.0}).

We now proceed to prove this proposition. Let $(\mathcal{F}_{t})_{t \ge0}$ be the usual augmentation of $(\CM_{t})_{t \ge0}$ with respect to $\mathbf{Q}^{s,x}$. Define a sequence of $\CF_t$-stopping times
\[\sigma_{n}:=\inf\{t\ge s: \int_{s}^{t}|b(u,X_{u})|du > n\}, \quad n \in \BN,  \]
and let
\[
 \tau_{n}:=\sigma_{n}\land n, \quad n \in \BN \ \ \mbox{with} \ \ n \ge s.
\]
According to the condition (\ref{wsintegrcondi}), it is easy to see that $\tau_n \to \infty$ $\ \mathbf{Q}^{s,x}$-a.s.

For each fixed $\omega \in \Omega$, it follows from \cite[Lemma 6.1.1]{MR2190038} that there is a unique probability measure $\delta_{\omega} \bigotimes_{ \tau_{n}(\omega)}\mathbf{P}^{ \tau_{n}(\omega), \omega_{\tau_{n}}}$ on $(\Omega, \CM)$ such that
\[
\textstyle \delta_{\omega} \bigotimes_{ \tau_{n}(\omega)}\mathbf{P}^{ \tau_{n}(\omega), \omega_{\tau_{n}}}\big(X_t=\omega_t, \ 0\le t \le \tau_n(\omega)\big)=1 \]
and
\[
\textstyle \delta_{\omega} \bigotimes_{ \tau_{n}(\omega)}\mathbf{P}^{ \tau_{n}(\omega), \omega_{\tau_{n}}}(A)=\mathbf{P}^{ \tau_{n}(\omega),\omega_{\tau_{n}}}(A), \quad A \in \CM^{\tau_{n}(\omega)},
\]
where $\CM^{t}:=\sigma(X(r): r\ge t)$, $t \ge0$. In view of Corollary \ref{psxismeasurable}, it is easy to check that $\delta_{(\cdot)} \bigotimes_{ \tau_{n}(\cdot)}\mathbf{P}^{ \tau_{n}(\cdot), (\cdot)_{\tau_{n}}}$  is a probability kernel from $(\Omega, \CF_{\tau_n})$ to $(\Omega,\CM)$. Thus it induces a probability measure $\mathbf{Q}^{s,x}_{n}$ on $(\Omega, \CM)$ with
\[
    \mathbf{Q}^{s,x}_{n}(A)= \int_{\Omega}\textstyle\delta_{\omega} \bigotimes_{ \tau_{n}(\omega)}\mathbf{P}^{ \tau_{n}(\omega), \omega_{\tau_{n}}}(A)\mathbf{Q}^{s,x}(d\omega), \quad   A \in \mathcal{M}.\]
As done in the proof of \cite[Theorem 6.1.2]{MR2190038}, it is easy to check that $\mathbf{Q}^{s,x}_{n}$ is again a solution to the martingale problem for $L_{t}$ starting from $(s,x)$.
Moreover,
\begin{align}&  \mathbf{E}_{\mathbf{Q}^{s,x}_{n}}\Big[\int_{s}^{\infty} e^{-\lambda (t-s)}|b(t,X_{t})|dt\Big] \notag \\
 =& \mathbf{E}_{\mathbf{Q}^{s,x}}\Big[\int_{s}^{\tau_{n}} e^{-\lambda (t-s)}|b(t,X_{t})|dt\Big]\notag \\
&\quad +\mathbf{E}_{\mathbf{Q}^{s,x}}\Big[e^{-\lambda (\tau_{n}-s)}\mathbf{E}_{\mathbf{P}^{\tau_{n},X_{\tau_{n}}}}\big[\int_{\tau_{n}}^{\infty} e^{-\lambda (t-\tau_{n})}|b(t,X_{t})|dt\big]\Big]\notag\\
\label{eqthm2infty1}\le & n+\mathbf{E}_{\mathbf{Q}^{s,x}}\Big[S^{\lambda}|b|(\tau_{n},X_{\tau_{n}})\Big].
 \end{align}
Similarly to (\ref{estiforsnm2}), we have
\begin{equation}\label{eqthm2infty2}
S^{\lambda}|b| \le 2R^{\lambda}|b|\le 2 N^{\alpha,+}_{2\epsilon_{1}}(|b|)< \infty.
\end{equation}
It follows from (\ref{eqthm2infty1}) and (\ref{eqthm2infty2}) that
\begin{equation}\label{newneweqthm2finfty3}
\mathbf{E}_{\mathbf{Q}^{s,x}_{n}}\Big[\int_{s}^{\infty} e^{-\lambda (t-s)}|b(t,X_{t})|dt\Big] <\infty.
\end{equation}

Under the probability measure $\mathbf{Q}^{s,x}_{n}$, the process $(X_t -\int_{s}^{t} b(u,X_{u})du )_{t \ge s}$ is a Brownian motion after time $s$. Applying  Ito's formula for $f\in C^{1,2}_b(\hs)$, we obtain
\begin{align*}& f(t,X_{t})-f(s,X_{s}) \\
=& \int_{s}^{t}\nabla f(u,X_{u})\cdot dX_{u}+\int_{s}^{t}\frac{\partial f}{\partial u}(u,X_{u})du+\frac{1}{2}\int_{s}^{t}\triangle f(u,X_{u})du\\
=& ``Martingale"+\int_{s}^{t}(\frac{\partial f}{\partial u}+L_{u}f)(u,X_{u})du.
\end{align*}
Taking expectations of both sides with respect to the measure $\mathbf{Q}^{s,x}_{n}$ gives
\[
    \mathbf{E}_{\mathbf{Q}^{s,x}_{n}}[f(t,X_{t})]-f(s,x)= \mathbf{E}_{\mathbf{Q}^{s,x}_{n}}\Big[\int_{s}^{t}(\frac{\partial f}{\partial u}+L_{u}f)(u,X_{u})du\Big]. \]
Multiplying both sides by $e^{-\lambda(t-s)}$, integrating with respect to $t$ from $s$ to $\infty$ and then applying Fubini's theorem, we get
\begin{align*}&  \mathbf{E}_{\mathbf{Q}^{s,x}_{n}}\Big[\int_{s}^{\infty} e^{-\lambda (t-s)}f(t,X_{t})dt\Big]\\
=&\frac{1}{\lambda}f(s,x)+  \mathbf{E}_{\mathbf{Q}^{s,x}_{n}}\Big[\int_{s}^{\infty} e^{-\lambda (t-s)}\int_{s}^{t}\big(\frac{\partial f}{\partial u}+L_{u}f\big)(u,X_{u})dudt\Big]\\
=& \frac{1}{\lambda}f(s,x)+\frac{1}{\lambda} \mathbf{E}_{\mathbf{Q}^{s,x}_{n}}\Big[\int_{s}^{\infty} e^{-\lambda (t-s)}\big(\frac{\partial f}{\partial t}+L_{t}f\big)(t,X_{t})dt\Big].
\end{align*}
Define a linear functional $V_{n}^{\lambda}$ by
\[
V_{n}^{\lambda}f:=\mathbf{E}_{\mathbf{Q}^{s,x}_{n}}\Big[\int_{s}^{\infty} e^{-\lambda (t-s)}f(t,X_{t})dt\Big] \]
for measurable functions $f$ on $\hs$ with
\[
\mathbf{E}_{\mathbf{Q}^{s,x}_{n}}\Big[\int_{s}^{\infty} e^{-\lambda (t-s)}f(t,X_{t})dt\Big]<\infty.
\]
Then
\begin{equation}\label{thmunieq1}
 \lambda V_{n}^{\lambda}f=f(s,x)+V_{n}^{\lambda}\Big(\frac{\partial f}{\partial t}+L_{t}f\Big), \quad f \in C^{1,2}_b(\hs). \end{equation}
For a given $g \in C^{1,2}_{b}(\hs)$, we have $f:=R^{\lambda}g \in C^{1,2}_b(\hs)$; moreover, since $R^{\lambda}$ is the space-time resolvent of Brownian motion, it holds
\begin{equation}\label{thmunieqnewnew}
   \lambda f(t,y) -\frac{1}{2}\triangle f(t,y)-\frac{\partial }{\partial t}f(t,y)=g(t,y), \quad (t,y) \in \hs.
\end{equation}
Substituting (\ref{thmunieqnewnew}) in the equation (\ref{thmunieq1}) and noting $f=R^{\lambda}g$,
we obtain
\begin{align*}
\lambda V_{n}^{\lambda}(R^{\lambda}g)=& R^{\lambda}g(s,x)+V_{n}^{\lambda}(\lambda f-g+BR^{\lambda}g ) \\
=& R^{\lambda}g(s,x)+V_{n}^{\lambda}(\lambda R^{\lambda}g-g+BR^{\lambda}g ),
\end{align*}
where $BR^{\lambda}$ is defined by (\ref{defiofBR}). Therefore,
\begin{equation}\label{neweqvn}
    V_{n}^{\lambda}g=R^{\lambda}g(s,x)+V_{n}^{\lambda}BR^{\lambda}g, \quad g \in C^{1,2}_b(\hs).
\end{equation}
 After a standard approximation procedure, the equation (\ref{neweqvn}) holds for any $g \in C_b(\hs)$. For any open subset $G\subset \hs$, we can find $g_k \in C_b(\hs)$, $k \in \BN$, such that
 $0\le g_k \uparrow \mathbf{1}_{G}$ as $k \to \infty$. For each $k \in \BN$, it holds
 \begin{equation}\label{BMSTDDPJeqnew}
 V_{n}^{\lambda}g_k=R^{\lambda}g_k(s,x)+V_{n}^{\lambda}BR^{\lambda}g_k.
 \end{equation}
 It's easy to see that $BR^{\lambda}g_k \to BR^{\lambda}(\mathbf{1}_{G})$ pointwise as $k \to \infty$ and
 \[
 |BR^{\lambda}g_k(t,X_{t})|\le C_{\lambda}\|g_k\|_{\infty}|b(t,X_{t})|\le C_{\lambda}|b(t,X_{t})|.
 \]
 Noting (\ref{newneweqthm2finfty3}) and letting $k\to \infty$ in (\ref{BMSTDDPJeqnew}), we conclude from dominated convergence theorem  that (\ref{neweqvn}) also holds for $g=\mathbf{1}_{G}$.
 Now, a standard monotone class argument extends (\ref{neweqvn}) to every  $g \in \CB_b(\hs)$, see, for example, \cite[p.~4]{MR648601}. So,
\begin{equation}\label{newnewneweqvn}
    V_{n}^{\lambda}g=R^{\lambda}g(s,x)+V_{n}^{\lambda}BR^{\lambda}g, \quad g \in \CB_b(\hs).
\end{equation}

Let $g \in \CB_b(\hs)$. Set $A_k:=\{(t,y)\in \hs: |b(t,y)|\le k\}$, $k\in \BN$.  Since $|\nabla R^{\lambda}g|$ is bounded and $V_{n}^{\lambda}(|b|)=\mathbf{E}_{\mathbf{Q}^{s,x}_{n}}[\int_{s}^{\infty} e^{-\lambda (t-s)}|b(t,X_{t})|dt]< \infty$, by dominated convergence theorem, we obtain
\begin{align}
V_{n}^{\lambda}BR^{\lambda}g= &\mathbf{E}_{\mathbf{Q}^{s,x}_{n}}\Big[\int_s^{\infty}e^{-\lambda (t-s)}(b\cdot \nabla R^{\lambda}g)(t,X_t) dt\Big] \notag \\
=&\lim_{k \to \infty}\mathbf{E}_{\mathbf{Q}^{s,x}_{n}}\Big[\int_s^{\infty}e^{-\lambda (t-s)}(\mathbf{1}_{A_k}b\cdot \nabla R^{\lambda}g)(t,X_t) dt\Big] \notag \\
\label{newnewneweq}=& \lim_{k \to \infty} V_{n}^{\lambda}(\mathbf{1}_{A_k}b\cdot \nabla R^{\lambda}g).
\end{align}
By (\ref{eqthm2infty2}), we have $\|R^{\lambda}(|b|)\|_{\infty}<\infty$. Similarly to (\ref{newnewneweq}), we get
\begin{equation}\label{newnewnew11}
R^{\lambda}BR^{\lambda}g(s,x)=\lim_{k \to \infty}R^{\lambda}(\mathbf{1}_{A_k}b\cdot \nabla R^{\lambda}g)(s,x)
\end{equation}
and
\begin{equation}\label{newnewnew22}
V_{n}^{\lambda}(BR^{\lambda})^2g=\lim_{k \to \infty}V_{n}^{\lambda}BR^{\lambda}(\mathbf{1}_{A_k}b\cdot \nabla R^{\lambda}g).
\end{equation}
Since $|\mathbf{1}_{A_k}b\cdot \nabla R^{\lambda}g|$ is bounded, it follows from (\ref{newnewneweqvn}) that
\begin{equation}\label{newnewnew33}
    V_{n}^{\lambda}(\mathbf{1}_{A_k}b\cdot \nabla R^{\lambda}g)=R^{\lambda}(\mathbf{1}_{A_k}b\cdot \nabla R^{\lambda}g)(s,x)+V_{n}^{\lambda}BR^{\lambda}(\mathbf{1}_{A_k}b\cdot \nabla R^{\lambda}g).
\end{equation}
Letting $k\to \infty$ in (\ref{newnewnew33}) and using (\ref{newnewneweq}), (\ref{newnewnew11}) and (\ref{newnewnew22}), we obtain
\[
    V_{n}^{\lambda}BR^{\lambda}g=R^{\lambda}BR^{\lambda}g(s,x)+V_{n}^{\lambda}(BR^{\lambda})^{2}g  ,\quad g \in \CB_b(\hs).\]
This and (\ref{newnewneweqvn}) imply
\[
    V_{n}^{\lambda}g=R^{\lambda}g(s,x)+R^{\lambda}BR^{\lambda}g(s,x)+V_{n}^{\lambda}(BR^{\lambda})^{2}g  ,\quad g \in \CB_b(\hs).\]
Proceeding as above, we obtain, for each $k\in \BN$,
\[
    V_{n}^{\lambda}g=\sum_{i=0}^{k}R^{\lambda}(BR^{\lambda})^{i}g(s,x)+ V_{n}^{\lambda}(BR^{\lambda})^{k+1}g ,\quad g \in \CB_b(\hs). \]
But
\[
            |V_{n}^{\lambda}(BR^{\lambda})^{k+1}g|\le  \|\nabla R^{\lambda}(BR^{\lambda})^{k}g\|_{\infty} V_{n}^{\lambda}|b| \to 0, \quad \mbox{as} \ \ k \to \infty,
            \]
where the convergence of $\|\nabla R^{\lambda}(BR^{\lambda})^{k}g\|_{\infty}$ to $0$ follows from (\ref{neweqnormofBRk}), Lemma \ref{lemma2.2.2.1} and Assumption \ref{assumption2.2.3}. Therefore,
\begin{align*}
     \mathbf{E}_{\mathbf{Q}^{s,x}_{n}}\Big[\int_{s}^{\infty} e^{-\lambda (t-s)}g(t,X_{t})dt\Big]=&V_{n}^{\lambda}g \\
     =&\sum_{i=0}^{\infty}R^{\lambda}(BR^{\lambda})^{i}g(s,x) \\
     =& \mathbf{E}_{\mathbf{P}^{s,x}}\Big[\int_{s}^{\infty} e^{-\lambda (t-s)}g(t,X_{t})dt\Big].\end{align*}
By the uniqueness of the Laplace transform, we have
\[
\mathbf{E}_{\mathbf{Q}^{s,x}_{n}}[f(X_{t})]=\mathbf{E}_{\mathbf{P}^{s,x}}[f(X_{t})],\quad \forall f\in C_{b}(\mathbb{R}^{d}),\ t \ge s.\]
Consequently,
\begin{align*}\mathbf{E}_{\mathbf{Q}^{s,x}}[f(X_{t})]=&\lim_{n \to \infty}\mathbf{E}_{\mathbf{Q}^{s,x}}[f(X_{t}); t<\tau_{n}]  \\
=& \lim_{n \to \infty} \mathbf{E}_{\mathbf{Q}^{s,x}_{n}}[f(X_{t}); t<\tau_{n}] \\
=& \lim_{n \to \infty} \mathbf{E}_{\mathbf{Q}^{s,x}_{n}}[f(X_{t}); t<\tau_{n}]+\lim_{n \to \infty} \mathbf{E}_{\mathbf{Q}^{s,x}_{n}}[f(X_{t}); t\ge \tau_{n}] \\
=& \mathbf{E}_{\mathbf{P}^{s,x}}[f(X_{t})].
\end{align*}
This completes the proof. 
\end{proof}

Since Proposition \ref{thm2.2.3.1} and Lemma \ref{newlemmamgt} hold, the uniqueness of solutions to the martingale problem for $L_{t}$ now follows by a standard argument.
\begin{theorem}The probability measure $\mathbf{P}^{s,x}$ on $(\Omega=C([0,\infty);\mathbb{R}^{d}),\mathcal{M})$ is the unique solution to the martingale problem for $L_{t}$ starting from $(s,x)$. Therefore, the martingale problem for
\[
  L_{t}=\frac{1}{2} \triangle+b(t,\cdot) \cdot \nabla \]is well-posed.
\end{theorem}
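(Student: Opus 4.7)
The existence half of the well-posedness statement is already supplied by Theorem~\ref{solutiontothempp}, so the task is to establish uniqueness: every probability measure $\mathbf{Q}^{s,x}$ on $(\Omega,\CM)$ which solves the martingale problem for $L_{t}$ starting from $(s,x)$ must coincide with $\mathbf{P}^{s,x}$. Since $\CM$ is generated by the cylinder sets $\{X_{t_{1}}\in A_{1},\dots,X_{t_{n}}\in A_{n}\}$ with $s\le t_{1}<\cdots<t_{n}$, it suffices to show that all finite-dimensional distributions of $\mathbf{Q}^{s,x}$ and $\mathbf{P}^{s,x}$ agree. The plan is to prove this by induction on $n$, reducing the multi-time-point case to the single-time-point identity already established in Proposition~\ref{thm2.2.3.1}.

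For $n=1$, Proposition~\ref{thm2.2.3.1} gives $\mathbf{E}_{\mathbf{Q}^{s,x}}[f(X_{t})]=\mathbf{E}_{\mathbf{P}^{s,x}}[f(X_{t})]$ for all $f\in C_{b}(\Rd)$, and a routine monotone class argument extends this to every bounded measurable $f$. For the inductive step, fix $s\le t_{1}<\cdots<t_{n}$ and bounded measurable $g_{1},\dots,g_{n}$. I would condition on $\CM_{t_{n-1}}$ under $\mathbf{Q}^{s,x}$: by Lemma~\ref{newlemmamgt}, off a $\mathbf{Q}^{s,x}$-null set the regular conditional distribution $Q_{\omega}$ solves the martingale problem for $L_{t}$ starting from $(t_{n-1},\omega_{t_{n-1}})$. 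Applying the base case at this new starting point yields
\[
\mathbf{E}_{Q_{\omega}}[g_{n}(X_{t_{n}})]=h(\omega_{t_{n-1}}),\qquad h(y):=\mathbf{E}_{\mathbf{P}^{t_{n-1},y}}[g_{n}(X_{t_{n}})].
\]
By Corollary~\ref{psxismeasurable}, $h$ is a bounded measurable function on $\Rd$.

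Consequently
\[
\mathbf{E}_{\mathbf{Q}^{s,x}}\Bigl[\prod_{i=1}^{n}g_{i}(X_{t_{i}})\Bigr]=\mathbf{E}_{\mathbf{Q}^{s,x}}\Bigl[\Bigl(\prod_{i=1}^{n-2}g_{i}(X_{t_{i}})\Bigr)\bigl(g_{n-1}h\bigr)(X_{t_{n-1}})\Bigr].
\]
Since $\mathbf{P}^{s,x}$ is itself a solution to the martingale problem by Theorem~\ref{solutiontothempp}, the identical conditioning argument applied to $\mathbf{P}^{s,x}$ produces the same representation with $\mathbf{P}^{s,x}$ replacing $\mathbf{Q}^{s,x}$ on both sides. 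The inductive hypothesis at the $n-1$ time points $t_{1}<\cdots<t_{n-1}$, applied to the bounded measurable functions $g_{1},\dots,g_{n-2},g_{n-1}h$, then identifies the two right-hand sides and closes the induction.

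The only real subtlety I anticipate is a technical one, namely verifying that the auxiliary function $h$ built by freezing the starting point is a legitimate bounded measurable function of $\omega_{t_{n-1}}$; this is precisely what Corollary~\ref{psxismeasurable} furnishes, and it is the reason why that measurability statement had to be established in advance. Given Lemma~\ref{newlemmamgt}, Proposition~\ref{thm2.2.3.1} and Corollary~\ref{psxismeasurable}, the remainder of the argument is the standard Stroock--Varadhan propagation scheme, and well-posedness of the martingale problem for $L_{t}$ follows at once.
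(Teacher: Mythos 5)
Your argument is correct and is precisely the one the paper intends: the paper's proof simply defers to \cite[Theorem~6.2.3]{MR2190038}, whose content is exactly the induction on finite-dimensional distributions via regular conditional distributions that you carry out, using Proposition~\ref{thm2.2.3.1} for the base case, Lemma~\ref{newlemmamgt} for the conditioning step, and Corollary~\ref{psxismeasurable} for the measurability of the frozen-starting-point function $h$. You have merely written out the details that the paper delegates to the reference.
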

\begin{proof}The proof is the same as the proof of \cite[Theorem~6.2.3]{MR2190038}. 
\end{proof}

\section{Existence and Uniqueness of Weak Solutions: Global Case}
Under Assumption \ref{assumption2.2.3}, we have proved that the martingale problem for $L_{t}$ is well-posed. Now we consider the general case, namely we only assume that
\[
    |b| \in \CF \CK_{d-1}^{\alpha} \ \ \text{for some} \  \alpha \in (0,1/2).\]
The procedure to construct a global solution from local solutions is quite standard and is usually referred to as the ``glueing argument".

\begin{theorem}\label{thmmain2}If $|b| \in \CF \CK_{d-1}^{\alpha}$ for some $\alpha \in (0,1/2)$, then the martingale problem for
\[
    L_{t}=\frac{1}{2}\triangle+b(t,\cdot) \cdot \nabla \] is well-posed. Equivalently, the SDE (\ref{SDE1}) has a unique weak solution for each $(s,x) \in \hs$.
\end{theorem}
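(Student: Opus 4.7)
The strategy is the standard localization/gluing procedure of Stroock--Varadhan, combining spatial truncation with temporal partition so that the drift becomes compactly supported and satisfies Assumption \ref{assumption2.2.3} piece by piece. Since $|b|\in\CF\CK_{d-1}^{\alpha}$, I would first choose $h_0\in(0,1/2)$ with $N^{\alpha,+}_{2h_0}(|b|)<\tfrac{1}{2\kappa C_1}$ and partition $[s,\infty)$ using the nodes $t_k:=s+kh_0$, $k\in\BN$. For each $N\in\BN$ I would then set
\[
b_{k,N}(t,y):=b(t,y)\,\BOne_{[t_k,t_{k+1})}(t)\,\BOne_{\{|y-x|\le N\}}(y),
\]
which has compact support in time and space, and whose norm $N^{\alpha,+}_{2h_0}(|b_{k,N}|)$ is bounded by $N^{\alpha,+}_{2h_0}(|b|)<\tfrac{1}{2\kappa C_1}$ by monotonicity of $N^{\alpha,+}_{\cdot}$ under pointwise domination. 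Thus $b_{k,N}$ satisfies Assumption \ref{assumption2.2.3} (with $\epsilon_1:=h_0$), and the local theorem of Section 5 provides a unique solution to the martingale problem for $\tfrac{1}{2}\Delta+b_{k,N}(t,\cdot)\cdot\nabla$ starting from any $(t_k,y)\in\hs$.

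The next step is to glue these local solutions. Fix $N$ and the starting point $(s,x)$. Using the local solutions corresponding to $b_{0,N},b_{1,N},\ldots$, I iterate the construction used in Proposition \ref{thm2.2.3.1}: starting from $\mathbf{P}^{s,x}_{0,N}$ on $\CM_{t_1}$, I use a regular conditional distribution and the kernel $\omega\mapsto \mathbf{P}^{t_1,\omega_{t_1}}_{1,N}$ (measurable in the starting point by Corollary \ref{psxismeasurable}) to extend the measure to $\CM_{t_2}$, and so on; the limit is a probability measure $\mathbf{P}^{s,x,N}$ on $(\Omega,\CM)$. Using the standard argument of Stroock--Varadhan (cf. \cite{MR2190038}, Theorem 6.1.2), $\mathbf{P}^{s,x,N}$ solves the martingale problem for the generator with drift $\sum_{k\ge 0}b_{k,N}=b\,\BOne_{\{|y-x|\le N\}}$, and uniqueness on each time slab, together with Lemma \ref{newlemmamgt}, gives uniqueness of $\mathbf{P}^{s,x,N}$. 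Finally, I let $\tau_N:=\inf\{t\ge s:|X_t-x|\ge N\}$. Since $b\BOne_{\{|y-x|\le N\}}$ and $b\BOne_{\{|y-x|\le M\}}$ coincide on $\{|y-x|<N\wedge M\}$, the localization technique (equivalence of the martingale problem and its local version, as recalled in Section 2) shows that $\mathbf{P}^{s,x,N}$ and $\mathbf{P}^{s,x,M}$ agree on $\CM_{\tau_N\wedge\tau_M}$. Consequently there exists a unique probability measure $\mathbf{P}^{s,x}$ on $(\Omega,\CM_{\lim_N\tau_N})$ with $\mathbf{P}^{s,x}|_{\CM_{\tau_N}}=\mathbf{P}^{s,x,N}|_{\CM_{\tau_N}}$.

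To complete the proof I need non-explosion, i.e.\ $\tau_N\uparrow\infty$ a.s.\ under $\mathbf{P}^{s,x,N}$ (or under the consistent extension). For this, I would apply Proposition \ref{newproptight} with the truncated drifts $b_{k,N}$: that proposition provides a modulus of continuity estimate which is independent of $N$ on each finite time horizon $[s,s+T]$, and in particular implies $\mathbf{P}^{s,x,N}\big(\sup_{s\le t\le s+T}|X_t-x|\ge N\big)\to 0$ as $N\to\infty$. Hence $\mathbf{P}^{s,x}(\tau_N\le s+T)\to 0$ for every $T$, so $\tau_N\to\infty$ $\mathbf{P}^{s,x}$-a.s., and $\mathbf{P}^{s,x}$ extends uniquely to a probability on $(\Omega,\CM)$. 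The integrability condition in (\ref{wsintegrcondi}) at the global level is inherited from its validity on each stopped interval $[s,\tau_N]$, since on $\{\tau_N>t\}$ the process $X$ coincides with the (local) solution for $b_{\cdot,N}$. Uniqueness of the full martingale problem is obtained by applying Proposition \ref{thm2.2.3.1} on each slab $[t_k,t_{k+1}]$ with the localization via $\tau_N$: any other solution $\mathbf{Q}^{s,x}$ agrees with $\mathbf{P}^{s,x}$ on $\CM_{\tau_N}$ for every $N$, and non-explosion yields $\mathbf{Q}^{s,x}=\mathbf{P}^{s,x}$.

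The main obstacle is verifying the spatial consistency and non-explosion cleanly: one must check that the tightness estimate of Proposition \ref{newproptight}, which was proved for the mollified drifts $b_n$ whose Kato norms are controlled by $\kappa N^{\alpha,+}_{h}(|b|)$, carries over to the truncated drifts $b_{\cdot,N}$ with a bound \emph{independent of $N$}. This is the case because $N^{\alpha,+}_h(|b_{\cdot,N}|)\le N^{\alpha,+}_h(|b|)$ by monotonicity of the integrand, so Lemma \ref{lemma2.2.3.50} produces a single $\delta$ working uniformly in $N$; the rest is bookkeeping in the style of the proof of Proposition \ref{thm2.2.3.1}.
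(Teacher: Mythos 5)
Your proposal is correct in outline and lives in the same general framework (reduce to Assumption \ref{assumption2.2.3} by truncation, solve locally, glue, prove non-explosion, deduce uniqueness by localization), but the decomposition and the non-explosion mechanism differ genuinely from the paper's. The paper does not truncate in space with a radius tending to infinity: it defines, for each $(s,x)$, the single patch $R_{(s,x)}=[s,s+\epsilon_1]\times\{|y-x|\le 1\}$ and the drift $\tilde b_{(s,x)}=\mathbf{1}_{R_{(s,x)}}b$, and then glues along the \emph{random} times $T_{i+1}=\inf\{t\ge T_i:(t,X_t)\notin R_{(T_i,X_{T_i})}\}$, so the patches follow the process. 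Non-explosion ($T_i\to\infty$) is then immediate from the uniform bound $\mathbf{E}[e^{-(T_{i+1}-T_i)}]\le\delta<1$ of Lemma \ref{lemma2.2.3.0}, with no second limit to take. Your route — deterministic time slabs of length $h_0$ crossed with balls of radius $N$, followed by $N\to\infty$ — buys a conceptually simpler gluing (at deterministic times) at the cost of an extra limiting procedure and a separate non-explosion argument. Two points in your version need care. First, Assumption \ref{assumption2.2.3} literally requires spatial support of radius $1$; your $b_{k,N}$ has radius $N$, which is harmless (the Section 5 arguments only use compact support plus the bound $N^{\alpha,+}_{2\epsilon_1}(|b_n|)<\tfrac{1}{2C_1}$, as the remark after Lemma \ref{newlemmasn} notes), but you should say so rather than invoke the assumption verbatim. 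Second, Proposition \ref{newproptight} is proved for the single-patch measures $\mathbf{P}^{s,x}_n$; to apply it to the glued measures $\mathbf{P}^{s,x,N}$ you must re-run its chaining argument across the slab boundaries using the strong Markov property of the glued measure. This works because Lemmas \ref{lemmatightness} and \ref{lemma2.2.3.0} are uniform over starting points and depend on the drift only through $N^{\alpha,+}_{2\epsilon_1}(|b|)$, which dominates $N^{\alpha,+}_{2\epsilon_1}(|b_{k,N}|)$; but it is an additional verification, not a citation — indeed, once you have the exit-time bound of Lemma \ref{lemma2.2.3.0} uniformly, you could dispense with the $N\to\infty$ limit altogether and argue as the paper does.
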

\begin{proof}Since $|b| \in \CF \CK_{d-1}^{\alpha}$, we can find a sufficiently small $\epsilon_{1}>0$ such that
\[
N^{\alpha,+}_{2\epsilon_{1}}(|b|)<\frac{1}{2\kappa C_{1}}.\]
For each $(s,x) \in \hs$, let \[R_{(s,x)}:=[s,s+\epsilon_{1}] \times \big \{y \in \Rd: |y-x|\le 1 \big \}.
\]
Define
\[
\tilde{b}_{(s,x)}(t,y):=\mathbf{1}_{R_{(s,x)}}(t,y)b(t,y), \quad (t,y) \in \hs.
\]
According to Theorem \ref{solutiontothempp}, there is a solution $\mathbf{P}^{s,x}$ to the martingale problem for the operator
\[
\frac{1}{2}\triangle+\tilde{b}_{(s,x)}(t,\cdot) \cdot \nabla\] starting from $(s,x)$.

Now fix $(s,x) \in \hs$ and let $T_{0}=s$. Define \[
T_{i+1}:=\inf\{t\ge T_{i}:(t,X_{t}) \notin R_{(T_{i},X_{T_{i}})} \}.\]  Let $\mathbf{Q}_{1}=\mathbf{P}^{s,x}$. 
Due to \cite[Theorem 6.1.2]{MR2190038}, we can inductively define
\[
    \mathbf{Q}_{i+1}\big(A\cap (C \circ \theta_{T_{i}})\big):=\mathbf{E}_{\mathbf{Q}_{i}}\big[ \mathbf{P}^{T_{i},X_{T_{i}}}(C);A \big ],\quad  A \in \CM_{T_{i}},\  C \in \CM,\]
where $\theta_t$ are the usual shift operators on $\Omega=C([0,\infty);\mathbb{R}^{d})$ so that $\big (\theta_t(\omega)\big)_s=\omega_{s+t}$. It is clear that $\mathbf{Q}_{m}|_{\CM_{T_{k}}}=\mathbf{Q}_{k}|_{\CM_{T_{k}}}$ when $m \ge k$. Let
\[
\mathbf{Q}(A):=\mathbf{Q}_{k}(A), \quad  A \in \CM_{T_{k}},\ k \in \BN.\]
Similarly to (\ref{estifortau}), there exists $\delta \in (0,1)$ such that
\[
\mathbf{E}_{\mathbf{Q}_{i+1}}\big[e^{-T_{i+1}} \big ] \le \delta^{i+1}.\]
For each fixed $M>0$, we have
\[
\lim_{i \to \infty}\mathbf{Q}_{i}(T_{i}\le M)=\lim_{i \to \infty}\mathbf{Q}_{i}\big(e^{-T_{i}}\ge e^{-M}\big)\le \lim_{i \to \infty} e^{M}\mathbf{E}_{\mathbf{Q}_{i}}\big[e^{-T_{i}} \big ]=0.  \]
Thus it follows from \cite[Theorem~1.3.5]{MR2190038} that $\mathbf{Q}$ extends uniquely to a probability measure on $(\Omega,\CM)$. It is then routine to check that $\mathbf{Q}$ is a solution to the martingale problem for $L_t$ starting from $(s,x)$. The proof of the uniqueness part can be achieved by standard arguments, see \cite[Section~6.3]{MR1483890} or \cite[Section~6.6]{MR2190038}. 
\end{proof}

\section*{Appendix: Additional Proofs}

\hspace*{\parindent}$Proof \ of \ \emph{(\ref{appendixclaim1})}$. For $0<h<1$, we have
\begin{align*}
&  \int_{0}^{h}\int_{\mathbb{R}^{d}}\frac{1}{t^{\frac{d+1}{2}}}\exp\Big(-c \frac{|y|^{2}}{t}\Big)|\tilde{f}(t,y)|dydt \\
 = &  \int_{1-h}^{1}\int_{\mathbb{R}^{d}}\frac{1}{(1-s)^{\frac{d+1}{2}}}\exp(-c \frac{|x|^{2}}{1-s})|f(s,x)|dxds \\
=&\int_{1-h}^{1}\int_{\{x:~|x|\le 3d(1-s)^{\frac{1}{2}}\}}\frac{1}{(1-s)^{\frac{d+1}{2}}}\exp(-c \frac{|x|^{2}}{1-s})|f(s,x)|dxds \\
=&   \int_{1-h}^{1}\frac{1}{(1-s)\ln (1-s)^{-1}}ds \int_{\{x:~|x|\le 3d(1-s)^{\frac{1}{2}}\}}\frac{1}{(1-s)^{\frac{d}{2}}}\exp(-c \frac{|x|^{2}}{1-s})dx\\
=&   \int_{1-h}^{1}\frac{1}{(1-s)\ln (1-s)^{-1}}ds \int_{\{x':~|x^{\prime}|\le 3d\}}\exp(-c |x^{\prime}|^{2})dx,
\end{align*}
where we have used a change of variable $ x^{\prime}:=x(1-s)^{-\frac{1}{2}}$ in the last equality.

Since
\[
\int_{1-h}^{1}\frac{1}{(1-s)\ln (1-s)^{-1}}ds=\infty \quad \mbox{and} \quad \int_{\{x':~|x^{\prime}|\le 3d\}}\exp(-c |x^{\prime}|^{2})dx>0, \]
we have
\[
N^{c,+}_{h}(\tilde{f})\ge\int_{0}^{h}\int_{\mathbb{R}^{d}}\frac{1}{t^{\frac{d+1}{2}}}\exp\Big(-c \frac{|y|^{2}}{t}\Big)|\tilde{f}(t,y)|dydt=\infty,\]
which implies $\tilde{f} \notin \CF \CK_{d-1}^{c}$. Next we show that $ f \in \CF \CK^{c}_{d-1}$.

For $1/2 \le s<1$, we have
 \[\lim_{s \to 1}\int_{s}^{1}\int_{\mathbb{R}^{d}}\frac{1}{(t-s)^{\frac{d+1}{2}}}\exp(-c \frac{|y|^{2}}{t-s})|f(t,y)|dydt =0.\]
In fact, if we set $r:=(t-s)/(1-s)$, then
\begin{align*}
&\lim_{s \to 1}\int_{s}^{1}\int_{\mathbb{R}^{d}}\frac{1}{(t-s)^{\frac{d+1}{2}}}\exp(-c \frac{|y|^{2}}{t-s})|f(t,y)|dydt  \\
\le &\lim_{s \to 1}\int_{s}^{1}\frac{1}{(t-s)^{\frac{1}{2}}(1-t)^{\frac{1}{2}} \ln (1-t)^{-1}}dt\int_{\mathbb{R}^{d}}\frac{1}{(t-s)^{\frac{d}{2}}}\exp(-c \frac{|y|^{2}}{t-s})dy\\
= &\lim_{s \to 1}C \int_{s}^{1}\frac{1}{(t-s)^{\frac{1}{2}}(1-t)^{\frac{1}{2}} \ln (1-t)^{-1}}dt \\
= &\lim_{s \to 1}C \int_{0}^{1}\frac{1}{r^{\frac{1}{2}}(1-r)^{\frac{1}{2}} \ln[ (1-s)^{-1}(1-r)^{-1}]}dr\\
\le &\lim_{s \to 1}\frac{C}{\ln (1-s)^{-1}}\int_{0}^{1}\frac{1}{r^{\frac{1}{2}}(1-r)^{\frac{1}{2}}}dr=0,
\end{align*}
where $C:=\int_{\mathbb{R}^{d}}\exp(-c |y|^{2})dy$ is a constant.

Let $\epsilon>0$ be given. Then we can find an $s_{0} \in (1/2,1)$ such that
\[\int_{s}^{1}\int_{\mathbb{R}^{d}}\frac{1}{(t-s)^{\frac{d+1}{2}}}\exp(-c\frac{|y|^{2}}{t-s})|f(t,y)|dydt <\frac{\epsilon}{4},\quad s \in [ s_{0}, 1). \]
Set \[
C'=\sup_{\frac{1}{2}\le t \le \frac{s_{0}+1}{2}}\frac{1}{(1-t)^{\frac{1}{2}} \ln(1-t)^{-1}}.\]
Let $h_{0}>0$ be sufficiently small such that
\[
    \int_{0}^{h_{0}}\int _{\mathbb{R}^{d}}\frac{1}{t^{\frac{d+1}{2}}}\exp(-c\frac{|y|^{2}}{t})dydt <\frac{\epsilon}{4C'}.\]
Now suppose $2h<h_{0} \wedge (1-s_{0})$. For $0 \le s<s_{0}$, we have
\begin{eqnarray*}
    &&\int_{s}^{s+h}\int_{\mathbb{R}^{d}}\frac{1}{(t-s)^{\frac{d+1}{2}}}\exp(-c\frac{|y|^{2}}{t-s})|f(t,y)|dydt \\
    && \le\int_{s}^{s+h}\int_{\mathbb{R}^{d}}\frac{1}{(t-s)^{\frac{d+1}{2}}}\exp(-c \frac{|y|^{2}}{t-s}) C'dydt \\
 && \le C' \frac{\epsilon}{4C'}\le \frac{\epsilon}{4}.
\end{eqnarray*}
If $s\ge s_{0}$, then
\begin{eqnarray*}
    &&\int_{s}^{s+h}\int_{\mathbb{R}^{d}}\frac{1}{(t-s)^{\frac{d+1}{2}}}\exp(-c \frac{|y|^{2}}{t-s})|f(t,y)|dydt \\
    && \le \int_{s}^{1}\int_{\mathbb{R}^{d}}\frac{1}{(t-s)^{\frac{d+1}{2}}}\exp(-c \frac{|y|^{2}}{t-s})|f(t,y)|dydt < \frac{\epsilon}{4}.
\end{eqnarray*}
Therefore,
\begin{eqnarray*}
&&\sup_{(s,x)\in\hs}\int_{s}^{s+h}\int_{\mathbb{R}^{d}}\frac{1}{(t-s)^{\frac{d+1}{2}}}\exp(-c \frac{|x-y|^{2}}{t-s})|f(t,y)|dydt \\
&&\le\sup_{s\in [0,\infty)}\int_{s}^{s+h}\int_{\mathbb{R}^{d}}\frac{1}{(t-s)^{\frac{d+1}{2}}}\exp(-c\frac{|y|^{2}}{t-s})|f(t,y)|dydt< \epsilon.
\end{eqnarray*}Thus we have proved that $\lim_{h\to 0}N^{c,+}_{h}(f)=0$,  namely $f \in \CF \CK^{c}_{d-1}$. \qed

\vspace{\bigskipamount}
$Proof \ of \ Lemma \ \it{\ref{newlemmamgt}}$. We follow the proof of \cite[Theorem~6.1.3]{MR2190038}. Let $\big\{f_{n}:f_{n}\in C_{0}^{\infty}(\mathbb{R}^{d}), \ n \in \BN \big\}$ be dense in $C_{0}^{\infty}(\mathbb{R}^{d})$. By \cite[Theorem 1.2.10]{MR2190038}, for each $f_{n}$, there exists $N_{n}\in \mathcal{M}_{t}$ such that $\mathbf{Q}^{s,x}(N_{n})=0$ and
\[
    M_{f_{n}}(u):=f_{n}(X_{u})-f_{n}(X_{t})-\int^{u}_{t}L_rf_{n}(r,X_{r})dr\]
is a martingale after time $t$ with respect to $(\Omega,\mathcal{M}_{u},Q_{\omega})$ for each $\omega \notin N_{n}.$

Define
\[\sigma_{l}:=\inf\{u\ge s: \int_{s}^{u}|b(r,X_{r})|dr > l\}, \quad l \in \BN .\]
Since $\mathbf{Q}^{s,x}\big(\int_{s}^{u}|b(r,X_{r})|dr<\infty \mbox{ for all } u \ge s\big)=1$, we have \[\mathbf{Q}^{s,x}(\sigma _{l}\to \infty \mbox{ as } l \to \infty)=1.\] Therefore, there exists $N_{\sigma} \in \CM_{t}$ such that
\[Q_{\omega}(\sigma _{l}\to \infty \mbox{ as } l \to \infty)=1 \quad\text{for all} \  \omega \notin N_{\sigma}.\]
Let $N:=N_{\sigma}\cup (\cup_{n \ge 1} N_{n}).$

We now fix $\omega \in \Omega \setminus N$. Let $(\CF_u)_{u\ge0}$ be the usual augmentation under $Q_{\omega}$ of the canonical filtration $(\CM_u)_{u \ge 0}$. Since $M_{f_{n}}$ has  $Q_{\omega}$-a.s. continuous paths, it is easy to verify that the process $M_{f_{n}}(u)$ is a martingale after time $t$ with respect to $(\Omega, \mathcal{F}_{u}, Q_{\omega})$. Since $\sigma_l$ is an $\CF_u$-stopping time, it follows that $M_{f_{n}}(u\land \sigma_{l})$ is also a martingale after time $t$ with respect to $(\Omega, \mathcal{F}_{u}, Q_{\omega})$. For any $f \in C^{\infty}_{0}(\mathbb{R}^{d})$, we can find $f_{n_{k}}$ such that $f_{n_{k}} \to f$ in $C^{\infty}_{0}(\mathbb{R}^{d})$ as $k \to \infty$. Then
\[
    M_{f_{n_{k}}}(u \land \sigma_{l}) \to M_{f}(u \land \sigma_{l}) \]
boundedly and $Q_{\omega}$-a.s. as $k \to \infty$. By dominated convergence theorem and the martingale property of $M_{f_{n_{k}}}(u \land \sigma_{l})$, we know that $(M_{f}(u\land \sigma_{l}),\mathcal{F}_{u},Q_{\omega})$ is also a martingale after $t$. Since $Q_{\omega}(\sigma _{l}\to \infty \mbox{ as } l \to \infty)=1$, the measure $Q_{\omega}$ solves the local martingale problem for $L_{t}$ starting from $(t,\omega_{t})$. Noting that the second order term in $L_{t}$ is $\triangle/2$, it follows from \cite[Proposition~5.4.11]{MR1121940} that the local martingale problem for $L_{t}$ is equivalent to the martingale problem for $L_{t}$. Thus $Q_{\omega}$ solves the martingale problem for $L_{t}$ starting from $(t,\omega_{t})$. \qed

\par\bigskip\noindent
{\bf Acknowledgements.} The author would like to thank Professor Michael R\"ockner for many helpful discussions and useful suggestions on the first version
of this paper. The  valuable comments and suggestions from the referees are also gratefully acknowledged.

\bibliographystyle{amsplain}

\end{document}